\documentclass[twoside,11pt]{article}

\usepackage{algorithm,algorithmic}

\newtheorem{thm}{Theorem}
\newtheorem{prop}{Proposition}

\newtheorem{ass}[thm]{Assumption}

\usepackage{enumerate}
\usepackage{booktabs}       
\usepackage{amsmath,bm,paralist,color}
\usepackage{epsf}

\usepackage{jmlr2e}
\usepackage{graphicx}
\usepackage{subfigure}

\usepackage{hyperref}

\def \x {\mathbf{x}}
\def \g {\mathbf{g}}

\def \u {\mathbf{u}}

\def \w {\mathbf{w}}

\def \R {\mathbb{R}}
\def \S {\mathcal{S}}

\def \v {\mathbf{v}}

\def \G {\mathcal {G}}

\def \wh {\widehat{\w}}

\def \s {\mathbf{s}}

\def \V {\mathcal{L}}

\usepackage{scrextend}


\jmlrheading{1}{2016}{1-37}{9/15}{}{Tianbao Yang and Qihang Lin}


\ShortHeadings{RSG: Beating Subgradient Method without Smoothness and Strong Convexity}{Yang, Lin}
\firstpageno{1}

\begin{document}

\title{RSG: Beating Subgradient Method without Smoothness and Strong Convexity}

\author{\name Tianbao Yang \email tianbao-yang@uiowa.edu\\
       \addr Department of Computer Science\\
       The University of Iowa, Iowa City, IA 52242, USA
       \AND
       \name Qihang Lin \email qihang-lin@uiowa.edu\\
       \addr Department of Management Science\\
       The University of Iowa, Iowa City, IA 52242, USA
       }

\editor{}

\maketitle
%
%
\begin{abstract}
In this paper, we study the efficiency of a {\bf R}estarted  {\bf S}ub{\bf G}radient (RSG) method that periodically restarts the standard subgradient method (SG). We show that, when applied to a broad class of convex optimization problems, RSG method can find an $\epsilon$-optimal solution with a lower complexity than the SG method. In particular, we first show that RSG can reduce the dependence of SG's iteration complexity on the distance between the initial solution and the optimal set to that between the $\epsilon$-level set and the optimal set {multiplied by a logarithmic factor}. Moreover, we show the advantages of RSG over SG in solving three different families of convex optimization problems. (a) For the problems whose epigraph is a polyhedron, RSG is shown to converge linearly. (b) For the problems with local quadratic growth property in the $\epsilon$-sublevel set, RSG has an $O(\frac{1}{\epsilon}\log(\frac{1}{\epsilon}))$ iteration complexity. (c) For the problems that admit a local Kurdyka-\L ojasiewicz property with a power constant of $\beta\in[0,1)$, RSG has an $O(\frac{1}{\epsilon^{2\beta}}\log(\frac{1}{\epsilon}))$ iteration complexity. 
The novelty of our analysis lies at exploiting the lower bound of the first-order optimality residual at the $\epsilon$-level set. It is this novelty that allows us to explore the local properties of functions (e.g., local quadratic growth property, local Kurdyka-\L ojasiewicz property, more generally local error bound conditions) to develop the improved convergence of RSG. { We also develop a practical variant of RSG  enjoying faster convergence than the SG method, which can be run without knowing the involved parameters in the local error bound condition.}  We demonstrate the effectiveness of the proposed algorithms on several machine learning tasks including regression, classification and matrix completion.
\end{abstract}

\begin{keywords}
  subgradient method, improved convergence, local error bound, machine learning
\end{keywords}

\section{Introduction}
 We consider the following generic optimization problem
\begin{equation}\label{eqn:prob0}
f_*:=\min_{\w\in\Omega} f(\w)
\end{equation}
where $f:\mathbb{R}^d\rightarrow(-\infty,+\infty]$ is an extended-valued, lower semicontinuous and convex function, and $\Omega\subseteq\R^d$ is a closed convex set such that $\Omega\subseteq\text{dom}(f)$. Here, we do not assume the smoothness of $f$ on $\text{dom}(f)$. During the past several decades, many fast (especially linearly convergent) optimization algorithms have been developed for \eqref{eqn:prob0} when $f$ is smooth and/or strongly convex. On the contrary, there are relatively fewer techniques for solving generic non-smooth and non-strongly convex optimization problems which still have many applications in machine learning, statistics, computer vision, and etc.  To solve \eqref{eqn:prob0} with $f$ being potentially non-smooth and non-strongly convex, one of the simplest algorithms to use is the subgradient (SG)~\footnote{In this paper, we use SG to refer deterministic subgradient method, though it is used in literature for stochastic gradient methods.} method. When $f$ is Lipschitz-continuous, it is known that SG method requires $O(1/\epsilon^2)$ iterations for obtaining an $\epsilon$-optimal solution~\citep{rockafellar1970convex,opac-b1104789}. It has been shown that this iteration complexity is unimprovable for general non-smooth and non-strongly convex problems in a black-box first-order oracle model of
computation~\citep{opac-b1091338}. However, better iteration complexity can be achieved by other first-order algorithms for certain classes of $f$ where additional structural information is available~\citep{Nesterov:2005:SMN,DBLP:journals/mp/GilpinPS12,2015arXivRobert,2014arXivJames,2015arXivJames,2016arXivJames}.

In this paper, we present a generic restarted subgradient (RSG) method for solving \eqref{eqn:prob0} which runs in multiple stages with each stage warm-started by the solution from the previous stage. Within each stage, the standard projected subgradient descent is performed for a fixed number of iterations with a constant step size. This step size is reduced geometrically from stage to stage. With these schemes, we show that RSG can achieve a lower iteration complexity than the classical SG method when $f$ belongs to some classes of functions. In particular, we summarize the main results and properties of RSG below:
\begin{itemize}
\item For the general problem \eqref{eqn:prob0}, under mild assumptions (see Assumption \ref{ass:rsg}), RSG has an iteration complexity of $O(\frac{1}{\epsilon^2}\log(\frac{\epsilon_0}{\epsilon}))$ which has an addition $\log(\frac{\epsilon_0}{\epsilon})$~\footnote{$\epsilon_0$ is a known upper bound of the initial optimality gap in terms of the objective value.} term but  has significantly smaller constant in $O(\cdot)$ compared to SG. {In particular, compared with SG whose iteration complexity linearly depends on the distance from the initial solution to the optimal set, RSG's iteration complexity only has a linear dependence on the distance from the $\epsilon$-level set to the optimal set, which is much smaller than the distance from the initial solution
to the optimal set. Its dependence on the initial solution is through $\epsilon_0$ - a known upper bound of the initial optimality gap, which only scales logarithmically.
}

\item When $f$ is locally quadratically growing (see Definition~\ref{def:ssc}), which is a weaker condition than strong convexity, RSG can achieve an $O(\frac{1}{\epsilon}\log(\frac{1}{\epsilon}))$ iteration complexity.
\item When $f$ admits a local Kurdyka-\L ojasiewicz property (see Definition~\ref{def:KL}) with a power desingularizing function of degree $1-\beta$ where $\beta\in[0,1)$, RSG can achieve an $O(\frac{1}{\epsilon^{2\beta}}\log(\frac{1}{\epsilon}))$ complexity.
\item When the epigraph of $f$ over $\Omega$ is a polyhedron, RSG can achieve linear convergence, i.e., an $O(\log(\frac{1}{\epsilon}))$ iteration complexity.
\end{itemize}
These results, except for the first one, are derived from a generic complexity of RSG for the problem satisfying a \emph{local error condition}~\eqref{eqn:ler}, which has a close connection to the existing error bound conditions and growth conditions~\citep{Pang:1997,Pang:1987:PEB:35577.35584,Luo:1993,DBLP:journals/corr/nesterov16linearnon,Bolte:2006:LIN:1328019.1328299} in the literature. In spite of its simplicity, the analysis of RSG provides additional insight on improving first-order methods' iteration complexity via restarting. It is known that restarting can improve the theoretical complexity of (stochastic) SG method for non-smooth problems when strongly convexity is assumed~\citep{DBLP:journals/siamjo/GhadimiL13,NIPS2012_4543,hazan-20110-beyond} but we show that restarting can be still helpful for SG methods under other (weaker) assumptions. We would like to remark that the key lemma (Lemma~\ref{lem:lowf}) developed in this work  can be leveraged to develop faster algorithms in different contexts. {For example, built on the groundwork laid in this paper
, \citet{DBLP:journals/corr/abs-1607-03815} have developed new smoothing algorithms to improve  the convergence of Nesterov's smoothing algorithm~\citep{Nesterov:2005:SMN} for non-smooth optimization with a special structure, and  \citet{DBLP:journals/corr/abs-1607-01027} have developed new stochastic gradient methods to improve the convergence  of standard stochastic subgradient method.}

{We organize the reminder of the paper as follows. Section~\ref{sec:rw} reviews some related work. Section~\ref{sec:main} presents some preliminaries and notations. Section~\ref{sec:RSG} presents the algorithm of RSG and the general theory of convergence. Section~\ref{sec:spec} considers  several classes of non-smooth and non-strongly convex problems and shows the improved iteration complexities of RSG. 
Section~\ref{sec:prac} presents parameter-free variants of RSG. Section~\ref{sec:exp} presents some experimental results. Finally, we conclude in Section~\ref{sec:conc}. }


\section{Related Work}\label{sec:rw}
Smoothness and strong convexity are two key properties of a convex optimization problem that affect the iteration complexity of finding an $\epsilon$-optimal solution by first-order methods. In general, a lower iteration complexity is expected when the problem is either smooth or strongly convex. We refer the reader to \citep{opac-b1104789,opac-b1091338} for the optimal iteration complexity of first-order methods when applied to the problems with different properties of smoothness and convexity. 
Recently there has emerged a surge of interest in further accelerating first-order methods for non-strongly convex or non-smooth problems that satisfy some particular conditions~\citep{DBLP:conf/nips/BachM13,DBLP:journals/jmlr/WangL14,DBLP:journals/corr/So13, DBLP:conf/nips/HouZSL13, DBLP:conf/icml/ZhouZS15, DBLP:journals/corr/GongY14,DBLP:journals/mp/GilpinPS12,2015arXivRobert}.
The key condition for us to develop an improved complexity is a local error bound condition~\eqref{eqn:ler} which is
closely related to the error bound conditions in~\citep{Pang:1987:PEB:35577.35584,Pang:1997,Luo:1993,DBLP:journals/corr/nesterov16linearnon,Bolte:2006:LIN:1328019.1328299,HuiZhang16a}. 

Various error bound conditions have been exploited in many studies to analyze the convergence of optimization algorithms. For example, \cite{Luo:1992a,Luo:1992b,Luo:1993} established the asymptotic linear convergence of a class of feasible descent algorithms for smooth optimization, including coordinate descent method and projected gradient method, based on a local error bound condition. Their results on coordinate descent method were further extended for a more general class of objective functions and constraints in~\citep{TsengYun:2009,TsengYun:2010}. \cite{DBLP:journals/jmlr/WangL14} showed that a global error bound holds for a family of non-strongly convex and smooth objective functions for which feasible descent methods can achieve a global linear convergence rate. Recently, these error bounds have been generalized and leveraged to show faster convergence for structured convex optimization that consists of a smooth function and a simple non-smooth function~\citep{DBLP:conf/nips/HouZSL13,ZhouSo15,DBLP:conf/icml/ZhouZS15}.  {Recently, \cite{doi:10.1137/130950288} considered a generalized error bound condition, and established linear convergence of a parallel version of a randomized (block) coordinate descent method for minimizing the sum of a partially separable smooth convex function and a fully separable non-smooth convex function.} We would like to emphasize that the aforementioned  error bounds are different from the local error bound explored in this paper. In particular, they bound the distance of  a point  to the optimal  set by using the norm of the projected gradient or proximal gradient at the point, thus requiring the smoothness of the objective function. In contrast, we bound the distance of a point to the optimal set by its objective residual with respect to the optimal value, covering a much broader family of functions.  More recently, there have appeared many studies that consider   smooth optimization or composite smooth optimization problems whose objective functions satisfy different error bound conditions, growth conditions or other non-degeneracy conditions and established the linear convergence rates of several first-order methods including proximal-gradient method, accelerated gradient method, prox-linear method and so on~\citep{DBLP:journals/corr/GongY14,DBLP:journals/corr/nesterov16linearnon,HuiZhang:2015,HuiZhang16a,DBLP:conf/pkdd/KarimiNS16,Drusvyatskiy16a,Drusvyatskiy16b,DBLP:conf/nips/HouZSL13,DBLP:conf/icml/ZhouZS15}. The relative strength and relationships between some of those conditions are studied by \cite{DBLP:journals/corr/nesterov16linearnon} and \cite{HuiZhang16a}. For example, \cite{DBLP:journals/corr/nesterov16linearnon} showed that under the smoothness assumption the second-order growth condition {(i.e., the considered error bound condition in the present work with $\theta=1/2$)} is equivalent to the error bound condition in~\citep{DBLP:journals/jmlr/WangL14}. It was brought to our attention that the local error bound condition in the present paper is closely related to metric subregularity of subdifferentials~\citep{artacho:2008,kruger2015,Drusvyatskiy14,RePEc:spr:jglopt:v:63:y:2015:i:4:p:777-795}. 

\cite{DBLP:journals/mp/GilpinPS12} established a polyhedral error bound condition for {problems whose epigraph is polyhedral and domain is a bounded polytope.} Using this polyhedral error bound condition, they studied a two-person zero-sum game and proposed a restarted first-order method based on Nesterov's smoothing technique~\citep{Nesterov:2005:SMN} that can find the Nash equilibrium and has linear convergence rate. 
{The differences between~\citep{DBLP:journals/mp/GilpinPS12} and this work are: (i)  we study subgradient methods instead of Nesterov's smoothing technique, where the former have broader applicability than Nesterov's smoothing technique; (ii) our linear convergence can be derived for a slightly general problem where the domain is allowed to be an unbounded polyhedron as long as the polyhedral error bound condition in Lemma~\ref{lem:polyeb} holds, which is the case for many important applications; (iii) we consider a general condition that subsumes the polyhedral error bound condition as a special case and we try to solve the general problem \eqref{eqn:prob0} rather than the bilinear saddle-point problem in~\citep{DBLP:journals/mp/GilpinPS12}. }

{The error bound condition that allows us to derive a linear convergence of RSG is the same to the weak sharp minimum condition, which was first coined  in 1970s~\citep{polysharp1979}. However, it was used  even  earlier  for studying the convergence of subgradient method~\citep{eremin1965,polyak1969}.  Later, it  was studied in many subsequent works~\citep{citeulike:13796896,doi:10.1137/0331063,doi:10.1137/S0363012996301269,Ferris1991,DBLP:journals/mp/BurkeD02,DBLP:journals/mp/BurkeD05,DBLP:journals/mp/BurkeD09}. Finite or linear convergence of several algorithms has been established under the weak sharp minimum condition, including gradient projection method~\citep{citeulike:13796896}, the proximal point algorithm (PPA)~\citep{Ferris1991}, and subgradient method with a particular choice of step size (see below)~\citep{polyak1969}.  We would like to emphasize the differences between the results in these works and the results in the present work that make our results novel: (i) the gradient projection method and its finite convergence established in~\citep{citeulike:13796896} requires the gradient of the objective function to be Lipschitz continuous, i.e., the objective function is smooth (please refer to Theorem 1 (Chapter 7, pp 207) in~\citep{citeulike:13796896}),  in contrast we do not assume smoothness of the objective function; (ii) the PPA  studied in~\citep{Ferris1991} requires solving a proximal sub-problem consisting of the original objective function and a strongly convex function at every iteration, and therefore its finite convergence does not mean that only a finite number of subgradient evaluations is needed. In contrast, the linear convergence in this paper was in terms of the number of subgradient evaluations; (iii) linear convergence of a subgradient method studied in~\citep{polyak1969} requires knowing the optimal objective value for setting its step size, and its convergence is in terms of the distance of the iterates to the optimal set, which is weaker than our linear convergence in terms of objective gap.  In addition, our method does not require knowing the optimal objective value. Instead the basic variant of RSG that has a linear convergence only needs to know the value of the multiplicative constant parameter in the local error bound condition. For problems without knowing this parameter, we also develop a practical variant of RSG that can achieve a convergence rate close to linear convergence.} 


In his recent work~\citep{2014arXivJames,2015arXivJames,2016arXivJames}, Renegar presented a framework of applying first-order methods to general conic optimization problems by transforming the original problem into an equivalent convex optimization problem with only linear equality constraints and a Lipschitz-continuous objective function. This framework greatly extends the applicability of first-order methods to the problems with general linear inequality constraints and leads to new algorithms and new iteration complexity. One of his results related to this work is Corollary 3.4 of~\citep{2015arXivJames}, which implies, if the objective function has a polyhedral epigraph and the optimal objective value is known beforehand, a subgradient method can have a linear convergence rate. Compared to this result of his, our method does not need to know the optimal objective value. 
Note that Renegar's method can be applied in a general setting where the objective function is not necessarily polyhedral while our method obtains improved iteration complexities under the local error bound conditions.


More recently, \cite{2015arXivRobert} proposed a new SG method by assuming that a strict lower bound of $f_*$, denoted by $f_{slb}$, is known and $f$ satisfies a growth condition, $\|\w-\w^*\|_2\leq \mathcal{G}\cdot(f(\w)-f_{slb})$, where $\w^*$ is the optimal solution closest to $\w$ and $\mathcal{G}$ is a growth rate constant depending on $f_{slb}$. Using a novel step size that incorporates $f_{slb}$, for non-smooth optimization, their SG method achieves an iteration complexity of $O(\mathcal{G}^2(\frac{\log H}{\epsilon'} + \frac{1}{\epsilon'^2}))$ for finding a solution $\hat{\w}$ such that $f(\hat{\w}) - f_*\leq \epsilon'(f_* - f_{slb})$, where $H = \frac{f(\w_0)-f_{slb}}{f_*-f_{slb}}$ and $\w_0$ is the initial solution. We note that there are several key differences in the theoretical properties and implementations between our work and~\citep{2015arXivRobert}: (i) Their growth condition has a similar form to the inequality~\eqref{eqn:keyii} we prove for a general function but there are still noticeable differences in the both sides and the growth constants. (ii) The convergence results in~\citep{2015arXivRobert} are established based on finding an solution $\hat{\w}$ with a relative error of $\epsilon'$ while we consider absolute error. (iii) By rewriting the convergence results in~\citep{2015arXivRobert} in terms of absolute accuracy $\epsilon$ with $\epsilon=\epsilon'(f_* - f_{slb})$, the complexity in~\citep{2015arXivRobert} depends on $f_*-f_{slb}$ and may be higher than ours if $f_*-f_{slb}$ is large. 
{However, Freund and Lu's new SG method is still attractive due to that it is a parameter free algorithm without requiring the value of the growth constant $\mathcal G$.} We will compared our RSG method with the method in~\citep{2015arXivRobert} with more details in Section~\ref{sec:diss}.

Restarting and multi-stage strategies have been utilized to achieve the (uniformly) optimal theoretical complexity of (stochastic) SG methods when $f$ is strongly convex~\citep{DBLP:journals/siamjo/GhadimiL13,NIPS2012_4543,hazan-20110-beyond} {or uniformly convex~\citep{Nesterov:2014:uniform_convex}}. Here, we show that restarting can be still helpful even without {uniform or} strong convexity. Furthermore, in all the algorithms proposed in \citep{DBLP:journals/siamjo/GhadimiL13,NIPS2012_4543,hazan-20110-beyond,Nesterov:2014:uniform_convex}, the number of iterations per stage increases between stages while our algorithm uses the same number of iterations in all stages. This provides a different possibility of designing restarted algorithms for a better complexity only under a local error bound condition.

\section{Preliminaries}\label{sec:main}
In this section, we define some notations used in this paper and present the main assumptions needed to establish our results.
We use $\partial f(\w)$ to denote the set of subgradients (the  subdifferential) of $f$ at $\w$.  
Since the objective function is not necessarily strongly convex, the optimal solution is not necessarily unique. We denote by $\Omega_*$  the optimal solution set and by $f_*$  the unique optimal objective value. We denote by $\|\cdot\|_2$ the Euclidean norm in $\mathbb{R}^d$.

Throughout the paper, we make the following assumption.
\begin{ass}\label{ass:rsg} For the convex minimization problem~(\ref{eqn:prob0}), we assume
\begin{itemize}
\item[\bf a.] For any $\w_0\in\Omega$, we know a constant $\epsilon_0\geq 0$ such that $f(\w_0) - f_*\leq \epsilon_0$.
\item[\bf b.] There exists a constant $G$ such that $\max_{\v\in\partial f(\w)}\|\v\|_2\leq G$  for any $\w\in\Omega$.
\end{itemize}
\end{ass}
We make several remarks about the above assumptions:  (i) Assumption~\ref{ass:rsg}.a is equivalent to assuming we know
a lower bound of $f_*$ which is one of the assumptions made in \citep{2015arXivRobert}. In machine learning applications, $f_*$ is usually bounded below by zero, i.e., $f_*\geq 0$, so that $\epsilon_0 = f(\w_0)$ for any $\w_0\in\R^d$ will satisfy the condition; (ii) Assumption~\ref{ass:rsg}.b is a standard assumption also made in many previous subgradient-based methods. 

Let $\w^*$ denote the closest optimal solution in $\Omega_*$ to $\w$ measured in terms of norm $\|\cdot\|_2$, i.e.,
\[
\w^* := \arg\min_{\u\in\Omega_*}\|\u - \w\|^2_2.
\]
Note that $\w^*$ is uniquely defined for any $\w$ due to the convexity of $\Omega_*$ and that $\|\cdot\|_2^2$ is strongly convex.
We denote by  $\V_\epsilon$ the $\epsilon$-level set of $f(\w)$ and  by $\S_\epsilon$ the $\epsilon$-sublevel set of $f(\w)$, respectively, i.e.,
\begin{eqnarray}
\label{def:LeSe}
\V_\epsilon := \{\w\in\Omega: f(\w) = f_* + \epsilon\}\quad\text{and}\quad\S_\epsilon := \{\w\in\Omega: f(\w) \leq f_* + \epsilon\}.
\end{eqnarray}
Let $B_\epsilon$ be the maximum distance between the points in the $\epsilon$-level set $\V_\epsilon$ and the optimal set $\Omega_*$, i.e.,
\begin{equation}\label{eqn:keyB}
B_{\epsilon}:= \max_{\w\in\V_\epsilon}\min_{\u\in\Omega_*}\|\w - \u\|_2 = \max_{\w\in\V_\epsilon}\|\w - \w^*\|_2.
\end{equation}
In the sequel, we also make the following assumption. 
{\begin{ass}\label{ass:rsg2} For the convex minimization problem~(\ref{eqn:prob0}), we assume that $B_\epsilon$ is finite. 
\end{ass}
{\bf Remark:} $B_\epsilon$ is finite when the optimal set $\Omega_*$ is bounded (e.g.,  when the objective function is a proper lower-semicontinuous convex and coercive function). This is because that  following~\citep{rockafellar1970convex} (Corollary 8.7.1)  the sublevel set $\S_\epsilon$ must be bounded for any $\epsilon\geq 0$. Nevertheless, the bounded optimal set is not a necessary condition for a finite $B_\epsilon$. For example, $f(x)=\max(0, x)$. Although its optimal set is not bounded,  $B_\epsilon=\epsilon$.  In Section~\ref{sec:spec}, we will consider a broad family of problems with a local error bound condition, which will satisfy  the above assumption.}

Let $\w_{\epsilon}^\dagger$ denote the closest point in the $\epsilon$-sublevel set to $\w$, i.e.,
\begin{equation}
\label{eq:wproj}
\begin{aligned}
\w_\epsilon^{\dagger}:=\arg\min_{\u\in\S_\epsilon}\|\u - \w\|^2_2
\end{aligned}
\end{equation}
Denote by $\Omega\backslash\S=\{\w\in\Omega: \w\not\in\S\}$.  It is easy to show that $\w_\epsilon^{\dagger}\in\V_\epsilon$ when $\w\in\Omega\backslash\S_\epsilon$ (using the optimality condition of \eqref{eq:wproj}).

Given $\w\in\Omega$, we denote the normal cone of $\Omega$ at $\w$ by $\mathcal{N}_{\Omega}(\w)$.  Formally, $\mathcal N_\Omega(\w)= \{\v\in\R^d: \v^{\top}(\u - \w)\leq 0, \forall \u\in\Omega\}$. Define
$\text{dist}(0, f(\w)+\mathcal{N}_{\Omega}(\w))$ as
\begin{equation}
\label{eq:bdsubgrad}
\text{dist}(0, f(\w)+\mathcal{N}_{\Omega}(\w)):=\min_{\mathbf g\in\partial f(\w),\mathbf v\in\mathcal{N}_{\Omega}(\w)}  \|\mathbf g+\mathbf v\|_2.
\end{equation}
Note that $\w\in\Omega_*$ if and only if $\text{dist}(0, f(\w)+\mathcal{N}_{\Omega}(\w))=0$. Therefore, we call $\text{dist}(0, f(\w)+\mathcal{N}_{\Omega}(\w))$ the \emph{first-order optimality residual} of \eqref{eqn:prob0} at $\w\in\Omega$. Given any $\epsilon>0$ such that $\V_\epsilon\neq\emptyset$, we define a constant $\rho_\epsilon$ as
\begin{equation}
\label{eq:defrho}
\rho_\epsilon:=\min_{\w\in\V_\epsilon}  \text{dist}(0, f(\w)+\mathcal{N}_{\Omega}(\w)).
\end{equation}

Given the notations above, we provide the following  lemma which is the key to our analysis.
\begin{lemma}
\label{lem:1}
For any $\epsilon>0$ such that $\V_\epsilon\neq\emptyset$ and any $\w\in\Omega$, we have
\begin{equation}\label{eqn:keyii}
\begin{aligned}
\|\w - \w_\epsilon^\dagger\|_2\leq \frac{1}{\rho_\epsilon}(f(\w) - f(\w_\epsilon^\dagger)).
\end{aligned}
\end{equation}
\end{lemma}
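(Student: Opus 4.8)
The plan is to reduce the claim to a one-dimensional convexity estimate along the segment joining $\w$ to its projection $\w_\epsilon^\dagger$ onto the sublevel set $\S_\epsilon$, and to control the directional derivative of $f$ at $\w_\epsilon^\dagger$ by the first-order optimality residual $\rho_\epsilon$. The statement is trivial when $\w\in\S_\epsilon$, since then $\w_\epsilon^\dagger=\w$ and both sides vanish, so I would assume $\w\in\Omega\backslash\S_\epsilon$; in that case, as the excerpt already notes (via the optimality condition of the projection \eqref{eq:wproj}), we have $\w_\epsilon^\dagger\in\V_\epsilon$, i.e., $f(\w_\epsilon^\dagger)=f_*+\epsilon$.

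First I would record the optimality condition for the projection: $\w-\w_\epsilon^\dagger$ belongs to the normal cone of $\S_\epsilon$ at $\w_\epsilon^\dagger$. Since $\S_\epsilon$ is a sublevel set of a convex function and $\w_\epsilon^\dagger$ lies on its boundary level set $\V_\epsilon$, the normal cone of $\S_\epsilon$ at $\w_\epsilon^\dagger$, intersected with directions that stay feasible in $\Omega$, is generated by $\partial f(\w_\epsilon^\dagger)+\N_\Omega(\w_\epsilon^\dagger)$; concretely, there exist $\g\in\partial f(\w_\epsilon^\dagger)$, $\v\in\N_\Omega(\w_\epsilon^\dagger)$ and a scalar $\lambda\ge 0$ with $\w-\w_\epsilon^\dagger=\lambda(\g+\v)$. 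I would pick $\g,\v$ to be the minimizers in \eqref{eq:bdsubgrad}, so that $\|\g+\v\|_2\ge\rho_\epsilon$ by the definition \eqref{eq:defrho} of $\rho_\epsilon$ (here $\w_\epsilon^\dagger\in\V_\epsilon$). Then by convexity of $f$ and the defining property of $\N_\Omega$ (with $\w\in\Omega$),
\[
f(\w)-f(\w_\epsilon^\dagger)\ \ge\ \g^\top(\w-\w_\epsilon^\dagger)\ \ge\ (\g+\v)^\top(\w-\w_\epsilon^\dagger)\ =\ \lambda\|\g+\v\|_2^2\ \ge\ \lambda\,\rho_\epsilon\,\|\g+\v\|_2\ =\ \rho_\epsilon\,\|\w-\w_\epsilon^\dagger\|_2,
\]
where I used $\v^\top(\w-\w_\epsilon^\dagger)\le 0$, then $\|\g+\v\|_2\ge\rho_\epsilon$, and finally $\lambda\|\g+\v\|_2=\|\w-\w_\epsilon^\dagger\|_2$. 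Dividing by $\rho_\epsilon>0$ gives \eqref{eqn:keyii}.

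The main obstacle is the normal-cone identification in the second step: justifying that the projection residual $\w-\w_\epsilon^\dagger$ can be written as a nonnegative combination of an element of $\partial f(\w_\epsilon^\dagger)+\N_\Omega(\w_\epsilon^\dagger)$. This requires a constraint-qualification-type argument — e.g., that $\S_\epsilon$ has nonempty interior (which holds because $f_*<f_*+\epsilon$ and $f$ is convex and lower semicontinuous, so any point strictly below the level is interior), so that the normal cone of $\{\w\in\Omega: f(\w)\le f_*+\epsilon\}$ at a boundary point $\w_\epsilon^\dagger$ equals $\mathrm{cone}(\partial f(\w_\epsilon^\dagger))+\N_\Omega(\w_\epsilon^\dagger)$ by the standard subdifferential calculus for sublevel sets. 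Once this structural fact is in place the rest is the short chain of inequalities above, and the only remaining care is the degenerate case $\g+\v=\mathbf 0$, which cannot occur here since $\w\notin\S_\epsilon$ forces $\w\ne\w_\epsilon^\dagger$ and hence $\lambda>0$ with a nonzero generator (and anyway $\rho_\epsilon>0$ rules out a zero residual on $\V_\epsilon$).
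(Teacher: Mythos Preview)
Your argument is correct and is essentially the paper's own proof: both derive the KKT condition for the projection \eqref{eq:wproj} (the paper writes it as $\w_\epsilon^\dagger-\w+\zeta\g+\v=0$, your form $\w-\w_\epsilon^\dagger=\lambda(\g+\v)$ being the same after absorbing $1/\zeta$ into the normal-cone element), then combine convexity, the normal-cone inequality $\v^\top(\w-\w_\epsilon^\dagger)\le 0$, and the definition of $\rho_\epsilon$ in exactly the chain you wrote.

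One harmless slip worth fixing: you cannot simultaneously require $\w-\w_\epsilon^\dagger=\lambda(\g+\v)$ \emph{and} that $\g,\v$ be the minimizers in \eqref{eq:bdsubgrad}---the pair $(\g,\v)$ is handed to you by the KKT condition, not free to choose. Fortunately you don't need this: the bound $\|\g+\v\|_2\ge\rho_\epsilon$ holds for \emph{every} $\g\in\partial f(\w_\epsilon^\dagger)$ and $\v\in\N_\Omega(\w_\epsilon^\dagger)$ directly from the definition \eqref{eq:defrho} (since $\rho_\epsilon$ is a minimum over both $\V_\epsilon$ and over all such $\g,\v$). So simply delete the ``pick the minimizers'' clause and the chain of inequalities stands as written.
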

\begin{proof}
Since the conclusion holds trivially if $\w\in\S_\epsilon$ (so that $\w^\dagger_\epsilon=\w$), we assume $\w\in\Omega\backslash\S_\epsilon$. According to the first-order optimality conditions of~(\ref{eq:wproj}), there exist a scalar $\zeta\geq 0$ (the Lagrangian multiplier of the constraint $f(\u)\leq f_* + \epsilon$ in~(\ref{eq:wproj})), a subgradient $\mathbf{g}\in \partial f(\w^\dagger_\epsilon)$ and a vector $\mathbf v\in\mathcal{N}_{\Omega}(\w^\dagger_\epsilon)$ such that
\begin{equation}\label{eqn:o2}
\begin{aligned}
&\w^\dagger_\epsilon - \w + \zeta \mathbf{g}+\mathbf v=0.
\end{aligned}
\end{equation}
The definition of normal cone leads to $
(\w_\epsilon^\dagger - \w)^{\top}\mathbf v\geq 0$.
This inequality and the convexity of $f(\cdot)$ imply
\[
\zeta\left(f(\w) - f(\w^\dagger_\epsilon)\right)\geq \zeta(\w - \w^\dagger_\epsilon)^{\top}\mathbf g
\geq (\w - \w^\dagger_\epsilon)^{\top}\left(\zeta\mathbf g+\mathbf v\right)
= \|\w- \w^\dagger_\epsilon\|_2^2
\]
where the equality is due to \eqref{eqn:o2}. Since $\w\in\Omega\backslash\S_\epsilon$, we must have $\|\w- \w^\dagger_\epsilon\|_2>0$ so that $\zeta>0$. Therefore, $\w^\dagger_\epsilon\in\V_\epsilon$ by complementary slackness.
Dividing the inequality above by $\zeta$ gives
\begin{eqnarray}
\label{eq:lemma2}
f(\w) - f(\w^\dagger_\epsilon)\geq \frac{\|\w- \w^\dagger_\epsilon\|_2^2}{\zeta}=\|\w- \w^\dagger_\epsilon\|_2\|\mathbf g+\mathbf v/\zeta \|_2\geq\rho_\epsilon\|\w- \w^\dagger_\epsilon\|_2,
\end{eqnarray}
where the equality is due to \eqref{eqn:o2} and the last inequality is due to the definition of $\rho_\epsilon$ in \eqref{eq:defrho}. The lemma is then proved.
\end{proof}

The inequality in~(\ref{eqn:keyii}) is the key to achieve improved convergence by RSG, which hinges on the condition that the first-order optimality residual on the $\epsilon$-level set is lower bounded. It is important to note that (i) the above result depends on $f$ rather than the optimization algorithm applied; and (ii) the above result can be generalized to use other norm such as the $p$-norm $\|\w\|_p$ ($p\in(1,2]$) to measure the distance between $\w$ and $\w^\dagger_\epsilon$ and use the corresponding dual norm to define the lower bound of the residual in \eqref{eq:bdsubgrad} and \eqref{eq:defrho}. This generalization allows one to design mirror decent~\citep{Nemirovski:2009:RSA:1654243.1654247} variant of RSG. 
To our best knowledge, this is the first work that leverages the lower bound of the optimal residual to improve the convergence for non-smooth convex optimization.

In the next several sections, we will exhibit the value of $\rho_\epsilon$ for different classes of problems and discuss its impact on the convergence. In the sequel, we abuse the Big O notation $T=O(h(\epsilon))$ to mean that there exists a constant $C>0$ independent of $\epsilon$ such that $T\leq Ch(\epsilon)$.

\section{Restarted SubGradient (RSG) Method and Its Generic Complexity for General Problem}\label{sec:RSG}
In this section, we present a framework of restarted subgradient (RSG) method and prove its general convergence result using Lemma~\ref{lem:1}. {It will be noticed that the algorithmic results developed in this section is less interesting from the viewpoint of practice. However, it will exhibit the insights for the improvements and provide the template for the developments in next several sections,} where we will present improved convergence of RSG for problems of different classes.

The steps of RSG are presented in Algorithm~\ref{alg:1} where SG is a subroutine of projected subgradient descent given in Algorithm~\ref{alg:0} and $\Pi_{\Omega}[\w]$ is defined as
\[
\Pi_{\Omega}[\w] = \arg\min_{\u\in\Omega}\|\u - \w\|_2^2.
\]
{The values of $K$ and $t$ in RSG will be revealed later for proving the convergence of RSG to an $2\epsilon$-optimal solution. }
The number of iterations $t$ is the only varying parameter in RSG that depends on the classes of problems. The parameter $\alpha$ could be any value larger than $1$ (e.g., 2) and it only has a small influence on the iteration complexity. 


We emphasize that (i) RSG is a generic algorithm that is applicable to a broad family of non-smooth and/or non-strongly convex problems without changing updating schemes except for one tuning parameter, the number of iterations per stage, whose best value varies with problems; (ii) RSG has different variants with different subroutines in stages. In fact,  we can use other optimization algorithms than SG as the subroutine in Algorithm~\ref{alg:1}, as long as a similar convergence result to Lemma~\ref{lem:GD} is guaranteed. Examples include dual averaging~\citep{Nesterov:2009:PSM:1530733.1530741} 
and the regularized dual averaging~\citep{NIPS2012_4543} in the non-Euclidean space.  In the following discussions, we will focus on using SG as the subroutine.
\begin{algorithm}[t]
\caption{SG:  $\wh_T = \text{SG}(\w_1, \eta, T)$} \label{alg:0}
\begin{algorithmic}[1]
\STATE \textbf{Input}: a step size $\eta$,  the  number of iterations $T$, and the initial solution $\w_1\in\Omega$
\FOR{$\tau=1,\ldots, T$}
    \STATE Query the subgradient oracle to obtain $\G(\w_\tau)\in\partial f(\w_{\tau})$
    \STATE Update $\w_{\tau+1} =\Pi_{\Omega}[ \w_\tau - \eta \G(\w_\tau)]$
   \ENDFOR
\STATE \textbf{Output}:  $\wh_T = \sum_{\tau=1}^T\frac{\w_\tau}{T}$
\end{algorithmic}
\end{algorithm}
\begin{algorithm}[t]
\caption{RSG: $\w_K=\text{RSG}(\w_0,  K, t, \alpha$)} \label{alg:1}
\begin{algorithmic}[1]
\STATE \textbf{Input}: the number of stages $K$ and the number of iterations $t$ per-stage, $\w_0 \in\Omega$, and $\alpha>1$.
\STATE Set $\eta_1=\epsilon_0/(\alpha G^2)$, where $\epsilon_0$ is from  Assumption~\ref{ass:rsg}.a
\FOR{$k=1,\ldots, K$}
    \STATE Call subroutine SG to obtain $\w_k = \text{SG}(\w_{k-1}, \eta_k, t)$
\STATE Set $\displaystyle \eta_{k+1} = \eta_{k}/\alpha$
\ENDFOR
\STATE \textbf{Output}:  $\w_K$
\end{algorithmic}
\end{algorithm}


Next, we establish the convergence of RSG. It relies on the convergence result of the SG subroutine which is given in the lemma below. 
\begin{lemma}\citep{zinkevich-2003-online,opac-b1104789} \label{lem:GD}
If Algorithm~\ref{alg:0} runs for $T$ iterations, we have, for any $\w\in\Omega$,
\[
f(\wh_T) - f(\w) \leq \frac{G^2\eta}{2} + \frac{\|\w_{1} - \w\|_2^2}{2\eta  T}.
\]
\end{lemma}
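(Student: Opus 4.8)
The plan is to carry out the standard regret-style analysis for projected subgradient descent with a fixed step size and then convert the regret bound into a bound on the averaged iterate using convexity. First I would fix an arbitrary $\w\in\Omega$ and track the squared distance $\|\w_{t+1}-\w\|_2^2$. Using the nonexpansiveness of the Euclidean projection $\Pi_\Omega$ (since $\w\in\Omega$) and the update rule $\w_{t+1}=\Pi_\Omega[\w_t-\eta\G(\w_t)]$, I would expand
\[
\|\w_{t+1}-\w\|_2^2\leq \|\w_t-\eta\G(\w_t)-\w\|_2^2 = \|\w_t-\w\|_2^2 - 2\eta\G(\w_t)^\top(\w_t-\w) + \eta^2\|\G(\w_t)\|_2^2.
\]
Then I would invoke Assumption~\ref{ass:rsg}.b to replace $\|\G(\w_t)\|_2^2$ by $G^2$, and the subgradient inequality $\G(\w_t)^\top(\w_t-\w)\geq f(\w_t)-f(\w)$ (valid since $\G(\w_t)\in\partial f(\w_t)$) to get
\[
f(\w_t)-f(\w)\leq \frac{\|\w_t-\w\|_2^2-\|\w_{t+1}-\w\|_2^2}{2\eta} + \frac{\eta G^2}{2}.
\]

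Next I would sum this inequality over $t=1,\dots,T$. The first term telescopes to $\frac{\|\w_1-\w\|_2^2-\|\w_{T+1}-\w\|_2^2}{2\eta}\leq \frac{\|\w_1-\w\|_2^2}{2\eta}$, and the second term contributes $\frac{T\eta G^2}{2}$, yielding
\[
\sum_{t=1}^T\bigl(f(\w_t)-f(\w)\bigr)\leq \frac{\|\w_1-\w\|_2^2}{2\eta} + \frac{T\eta G^2}{2}.
\]
Finally, dividing by $T$ and applying Jensen's inequality to the convex function $f$ at the average $\wh_T=\frac1T\sum_{t=1}^T\w_t$ gives $f(\wh_T)-f(\w)\leq \frac1T\sum_{t=1}^T(f(\w_t)-f(\w))$, which combined with the displayed bound produces exactly $f(\wh_T)-f(\w)\leq \frac{G^2\eta}{2}+\frac{\|\w_1-\w\|_2^2}{2\eta T}$.

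There is no real obstacle here; this is a textbook argument and the lemma is cited from \citep{zinkevich-2003-online,opac-b1104789}. The only points requiring a modicum of care are (i) ensuring $\w\in\Omega$ so that the projection is a contraction toward $\w$, which is exactly the hypothesis, and (ii) correctly handling the boundary terms in the telescoping sum (discarding the nonnegative $\|\w_{T+1}-\w\|_2^2$ term). Everything else is routine algebra.
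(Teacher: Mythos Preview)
Your proposal is correct and follows essentially the same route as the paper's proof: the paper also derives the per-step inequality $f(\w_t)-f(\u)\leq \frac{\|\w_t-\u\|_2^2-\|\w_{t+1}-\u\|_2^2}{2\eta}+\frac{\eta}{2}\|\G(\w_t)\|_2^2$ (via the projection's optimality/three-point inequality rather than the nonexpansiveness form you use, but these are equivalent), telescopes over $t=1,\dots,T$, bounds $\|\G(\w_t)\|_2\leq G$, and concludes by Jensen on the averaged iterate. There is no substantive difference.
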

We omit the proof because it follows a standard analysis and can be found in cited papers. With the above lemma, we can prove the following convergence of RSG. 
\begin{theorem}\label{thm:GDr}
Suppose Assumption~\ref{ass:rsg} holds.  If $t\geq \frac{\alpha^2 G^2}{\rho_\epsilon^2}$ and $K = \lceil \log_\alpha(\frac{\epsilon_0}{\epsilon})\rceil$ in Algorithm~\ref{alg:1}, with at most $K$ stages, Algorithm~\ref{alg:1} returns a solution $\w_K$ such that  $f(\w_K)-f_*\leq 2\epsilon$. 
The total number of iterations for Algorithm~\ref{alg:1} to find an $2\epsilon$-optimal solution is at most $T= t\lceil \log_\alpha(\frac{\epsilon_0}{\epsilon})\rceil$ where $t\geq \frac{\alpha^2 G^2}{\rho_\epsilon^2}$. 
\end{theorem}
{\bf Remark:} If $t$ also satisfies $t=O\left( \frac{\alpha^2 G^2}{\rho_\epsilon^2}\right)$, then the iteration complexity of Algorithm~\ref{alg:1} for finding an $\epsilon$-optimal solution is  $O\left(\frac{\alpha^2G^2}{\rho_\epsilon^2}\lceil \log_\alpha(\frac{\epsilon_0}{\epsilon})\rceil\right)$.

\begin{proof}

Let $\w_{k,\epsilon}^\dagger$ denote the closest point to $\w_k$ in the $\epsilon$-sublevel set.  Let $\epsilon_k:=\frac{\epsilon_0}{\alpha^k}$ so that $\eta_k = \epsilon_k/G^2$ because $\eta_1=\epsilon_0/(\alpha G^2)$ and $\eta_{k+1} = \eta_{k}/\alpha$. We will show by induction that
\begin{equation}\label{eqn:mainineq1}
f(\w_k) - f_*\leq \epsilon_k +\epsilon
\end{equation}
for $k=0,1,\dots,K$ which leads to our conclusion if we let $k=K$.

Note that \eqref{eqn:mainineq1} holds obviously for $k=0$. Suppose it holds for $k-1$, namely, $f(\w_{k-1}) - f_*\leq \epsilon_{k-1} + \epsilon$. We want to prove \eqref{eqn:mainineq1} for $k$. We apply Lemma~\ref{lem:GD} to the $k$-th stage of Algorithm~\ref{alg:1} and get
\begin{equation}\label{eqn:mainineq2}
f(\w_k) - f(\w_{k-1, \epsilon}^\dagger)\leq \frac{G^2\eta_k}{2} + \frac{\|\w_{k-1} - \w_{k-1,\epsilon}^\dagger\|_2^2}{2\eta_k t}.
\end{equation}
We now consider two cases for $\w_{k-1}$. First, assume $f(\w_{k-1}) - f_* \leq \epsilon$, i.e., $\w_{k-1}\in\mathcal S_\epsilon$. Then $\w^\dagger_{k-1,\epsilon} = \w_{k-1}$ and $f(\w_k) - f(\w^\dagger_{k-1,\epsilon}) \leq \frac{G^2\eta_k}{2} = \frac{\epsilon_k}{2}$.
As a result,
\[
f(\w_k) - f_* \leq f(\w_{k-1,\epsilon}^\dagger) - f_* + \frac{\epsilon_k}{2} \leq  \epsilon + \epsilon_k
\]
Next, we consider the case that $f(\w_{k-1}) - f_*>\epsilon$, i.e., $\w_{k-1}\not\in\S_\epsilon$. Then we have $f(\w_{k-1,\epsilon}^\dagger) = f_* + \epsilon$.  By Lemma~\ref{lem:1}, we have
\begin{equation}\label{eqn:mainineq3}
\begin{aligned}
\|\w_{k-1} - \w^\dagger_{k-1,\epsilon}\|_2&\leq \frac{1}{\rho_\epsilon}(f(\w_{k-1}) - f(\w_{k-1,\epsilon}^\dagger))=\frac{ f(\w_{k-1}) - f_*  + (f_* - f(\w^\dagger_{k-1,\epsilon}))}{\rho_\epsilon}\\
&\leq \frac{\epsilon_{k-1}+\epsilon - \epsilon}{\rho_\epsilon}.
\end{aligned}
\end{equation}
Combining  \eqref{eqn:mainineq2} and \eqref{eqn:mainineq3} and using the facts that $\displaystyle \eta_k =\frac{\epsilon_k}{G^2}$  and $\displaystyle t \geq \frac{\alpha^2 G^2}{\rho_\epsilon^2}$, we have
\[
f(\w_k) - f(\w^\dagger_{k-1,\epsilon})\leq   \frac{\epsilon_k}{2} + \frac{\epsilon_{k-1}^2}{2\epsilon_k\alpha^2} = \epsilon_k
\]
which, together with the fact that $ f(\w^\dagger_{k-1,\epsilon})=f_*+\epsilon$, implies \eqref{eqn:mainineq1} for $k$.
Therefore, by induction, we have \eqref{eqn:mainineq1} holds for $k=1,2,\dots,K$ so that
\[
f(\w_K) - f_*\leq \epsilon_K  + \epsilon = \frac{\epsilon_0}{\alpha^K} +\epsilon\leq 2\epsilon,
\]
where the last inequality is due to the definition of $K$.
\end{proof}


In Theorem~\ref{thm:GDr}, the iteration complexity of RSG for the general problem \eqref{eqn:prob0} is given in terms of $\rho_\epsilon$. Next, we show that $\rho_\epsilon\geq\frac{\epsilon}{B_\epsilon}$, which allows us to leverage the local error bound condition in next sections to upper bound  $B_\epsilon$ to obtain specialized and more practical algorithms for different classes of problems.
\begin{lemma}\label{lem:lowf}
For any $\epsilon>0$ such that $\V_\epsilon\neq\emptyset$, we have $
\rho_\epsilon\geq \frac{\epsilon}{B_\epsilon}$,
where $B_\epsilon$ is defined in~(\ref{eqn:keyB}), and for any $\w\in\Omega$
\begin{equation}\label{eqn:keyto}
\|\w - \w_\epsilon^\dagger\|_2 \leq \frac{\|\w_\epsilon^\dagger - \w^*_\epsilon\|_2}{\epsilon}(f(\w) - f(\w_\epsilon^\dagger))\leq \frac{B_\epsilon}{\epsilon}(f(\w) - f(\w_\epsilon^\dagger)),
\end{equation}
where $\w_\epsilon^*$ is the closest point in $\Omega_*$ to $\w^\dagger_\epsilon$.
\end{lemma}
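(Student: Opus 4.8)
The plan is to isolate the single computation that drives both halves of the statement: for any point $\u\in\V_\epsilon$, any subgradient $\g\in\partial f(\u)$ and any $\v\in\N_\Omega(\u)$, the vector $\g+\v$ cannot be much shorter than $\epsilon$ divided by the distance from $\u$ to $\Omega_*$. Indeed, letting $\u^*$ be the closest optimal solution to $\u$, convexity of $f$ gives $f_*=f(\u^*)\geq f(\u)+\g^\top(\u^*-\u)$, i.e. $\g^\top(\u-\u^*)\geq f(\u)-f_*=\epsilon$, while $\v\in\N_\Omega(\u)$ and $\u^*\in\Omega$ give $\v^\top(\u-\u^*)\geq 0$; adding these and applying Cauchy--Schwarz yields $\|\g+\v\|_2\,\|\u-\u^*\|_2\geq(\g+\v)^\top(\u-\u^*)\geq\epsilon$, so $\|\g+\v\|_2\geq\epsilon/\|\u-\u^*\|_2$. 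For the first half, I apply this with $\g,\v$ chosen to attain the minimum in the definition~\eqref{eq:bdsubgrad} of $\|\partial f(\u)+\N_\Omega(\u)\|_2$ and use $\|\u-\u^*\|_2\leq B_\epsilon$ (the definition~\eqref{eqn:keyB}), obtaining $\|\partial f(\u)+\N_\Omega(\u)\|_2\geq\epsilon/B_\epsilon$ for every $\u\in\V_\epsilon$; minimizing over $\u\in\V_\epsilon$ gives $\rho_\epsilon\geq\epsilon/B_\epsilon$.

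For the second half, if $\w\in\S_\epsilon$ then $\w_\epsilon^\dagger=\w$ and both sides of~\eqref{eqn:keyto} vanish, so I assume $\w\in\Omega\backslash\S_\epsilon$, in which case $\w_\epsilon^\dagger\in\V_\epsilon$. I would re-run the proof of Lemma~\ref{lem:1} verbatim up to the inequality
\[
f(\w)-f(\w_\epsilon^\dagger)\geq \|\w-\w_\epsilon^\dagger\|_2\,\|\g+\v/\zeta\|_2,
\]
where $\zeta>0$, $\g\in\partial f(\w_\epsilon^\dagger)$ and $\v\in\N_\Omega(\w_\epsilon^\dagger)$ come from the optimality conditions~\eqref{eqn:o2}. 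Rather than bounding $\|\g+\v/\zeta\|_2$ below by the global constant $\rho_\epsilon$, I apply the core estimate above to the point $\u=\w_\epsilon^\dagger\in\V_\epsilon$ with the subgradient $\g$ and the normal-cone vector $\v/\zeta$ (which remains in $\N_\Omega(\w_\epsilon^\dagger)$ since the normal cone is a cone and $\zeta>0$), getting $\|\g+\v/\zeta\|_2\geq\epsilon/\|\w_\epsilon^\dagger-\w_\epsilon^*\|_2$ with $\w_\epsilon^*$ the closest point in $\Omega_*$ to $\w_\epsilon^\dagger$. Substituting into the displayed inequality and rearranging gives the first inequality in~\eqref{eqn:keyto}; the second follows at once from $\|\w_\epsilon^\dagger-\w_\epsilon^*\|_2\leq B_\epsilon$, again by~\eqref{eqn:keyB} and $\w_\epsilon^\dagger\in\V_\epsilon$.

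There is essentially no obstacle: the content is the realization that the argument behind Lemma~\ref{lem:1} never needed the worst-case residual $\rho_\epsilon$ but only a lower bound on the optimality residual at the single point $\w_\epsilon^\dagger$, and the convexity/normal-cone/Cauchy--Schwarz chain supplies exactly such a pointwise bound. The only spots requiring a moment of care are confirming that $\v/\zeta\in\N_\Omega(\w_\epsilon^\dagger)$ with $\zeta>0$ (the positivity of $\zeta$ for $\w\notin\S_\epsilon$ is already established inside the proof of Lemma~\ref{lem:1}), and noting that $\u\in\V_\epsilon$ forces $\u\neq\u^*$ so that the divisions are legitimate; everything else is substitution.
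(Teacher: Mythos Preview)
Your proposal is correct and follows essentially the same route as the paper: both establish the pointwise bound $\|\g+\v\|_2\,\|\u-\u^*\|_2\geq\epsilon$ for $\u\in\V_\epsilon$ via convexity, the normal-cone inequality, and Cauchy--Schwarz, then feed this into the intermediate inequality~\eqref{eq:lemma2} from Lemma~\ref{lem:1} with $\u=\w_\epsilon^\dagger$ and normal vector $\v/\zeta$. Your explicit remarks that $\v/\zeta\in\N_\Omega(\w_\epsilon^\dagger)$ (since $\zeta>0$ and the normal cone is a cone) and that $\u\neq\u^*$ are welcome bits of care that the paper leaves implicit.
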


\begin{figure}
\centering
\includegraphics[scale=1.2]{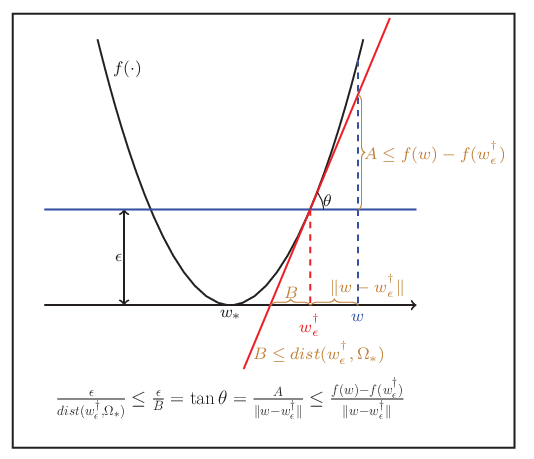}
\caption{A geometric illustration of the inequality~(\ref{eqn:keyto}), where $dist(w^\dagger_\epsilon,\Omega_*) = |w^\dagger_\epsilon  - w^*_\epsilon|$. }
\label{fig:2}
\end{figure}

\begin{proof}
Given any $\u\in\V_\epsilon$, let $\mathbf g_\u$ be any subgradient in $\partial f(\u)$ and $\mathbf v_\u$ be any vector in $\mathcal{N}_{\Omega}(\u)$. By the convexity of $f(\cdot)$ and the definition of normal cone, we have
\[
f(\u^*) - f(\u)\geq (\u^* - \u)^{\top}\mathbf g_\u
\geq(\u^* - \u)^{\top}\left(\mathbf g_\u+\mathbf v_\u\right),
\]
where $\u^*$ is the closest point in $\Omega_*$ to $\u$. This inequality further implies
\begin{equation}
\label{eq:lowboundeq1}
\begin{aligned}
 \|\u^* - \u\|_2&\|\mathbf g_\u+\mathbf v_\u\|_2\geq  f(\u) - f(\u^*) = \epsilon, \quad\forall \g_\u\in\partial f(\u) \text{ and } \v_\u\in\mathcal N_{\Omega}(\u)
 \end{aligned}
\end{equation}
where the equality is because $\u\in\V_\epsilon$. By \eqref{eq:lowboundeq1} and the definition of $B_\epsilon$, we obtain
\[
B_\epsilon\|\mathbf g_\u+\mathbf v_\u\|_2\geq \epsilon\Longrightarrow \|\mathbf g_\u+\mathbf v_\u\|_2\geq \epsilon/B_\epsilon.
\]
Since $\mathbf g_\u+\mathbf v_\u$ can be any element in $\partial f(\u)+\mathcal{N}_{\Omega}(\u)$, we have $\rho_\epsilon\geq \frac{\epsilon}{B_\epsilon}$ by the definition \eqref{eq:defrho}.

To prove~(\ref{eqn:keyto}), we assume $\w\in\Omega\backslash\S_\epsilon$ and thus $\w^\dagger_\epsilon\in\V_\epsilon$; otherwise it is trivial. In the proof of Lemma~\ref{lem:1}, we have shown that (see \eqref{eq:lemma2}) there exists $\g\in\partial f(\w_\epsilon^\dagger)$ and $\v\in\mathcal N_\Omega(\w^\dagger_\epsilon)$ such that
$
f(\w) - f(\w^\dagger_\epsilon)\geq\|\w- \w^\dagger_\epsilon\|_2\|\mathbf g+\mathbf v/\zeta \|_2
$, which, according to~(\ref{eq:lowboundeq1}) with $\u=\w^\dagger_\epsilon$, $\mathbf g_\u=\mathbf g$ and $\mathbf v_\u=\mathbf v/\zeta$, leads to (\ref{eqn:keyto}).
\end{proof}
A geometric explanation of the inequality~(\ref{eqn:keyto}) in one dimension is shown in Figure~\ref{fig:2}. With Lemma~\ref{lem:lowf}, the iteration complexity of RSG can be stated in terms of $B_\epsilon$ in the following corollary of Theorem~\ref{thm:GDr}.
\begin{corollary}\label{lem:3}
Suppose Assumption~\ref{ass:rsg} holds.  The iteration complexity of RSG for obtaining an $2\epsilon$-optimal solution is $O(\frac{\alpha^2G^2B_\epsilon^2}{\epsilon^2}\lceil \log_\alpha(\frac{\epsilon_0}{\epsilon})\rceil)$ provided $t=\frac{\alpha^2G^2B_\epsilon^2}{\epsilon^2}$ 
and $K = \lceil \log_\alpha(\frac{\epsilon_0}{\epsilon})\rceil$.
\end{corollary}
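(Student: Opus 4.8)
The plan is to derive Corollary~\ref{lem:3} as an immediate consequence of Theorem~\ref{thm:GDr} combined with the lower bound $\rho_\epsilon \geq \epsilon/B_\epsilon$ established in Lemma~\ref{lem:lowf}. First I would observe that Theorem~\ref{thm:GDr} guarantees a $2\epsilon$-optimal solution whenever the per-stage iteration count satisfies $t \geq \frac{\alpha^2 G^2}{\rho_\epsilon^2}$ and the number of stages is $K = \lceil \log_\alpha(\frac{\epsilon_0}{\epsilon})\rceil$, with total iteration complexity $O\!\left(t \lceil \log_\alpha(\frac{\epsilon_0}{\epsilon})\rceil\right)$.

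The key step is then to substitute the bound from Lemma~\ref{lem:lowf}: since $\rho_\epsilon \geq \frac{\epsilon}{B_\epsilon} > 0$, we have $\frac{\alpha^2 G^2}{\rho_\epsilon^2} \leq \frac{\alpha^2 G^2 B_\epsilon^2}{\epsilon^2}$. Hence any choice of $t$ with $t \geq \frac{\alpha^2 G^2 B_\epsilon^2}{\epsilon^2}$ automatically satisfies the hypothesis $t \geq \frac{\alpha^2 G^2}{\rho_\epsilon^2}$ of Theorem~\ref{thm:GDr}, so the theorem applies and returns a $2\epsilon$-optimal solution after $K$ stages. If in addition $t = O\!\left(\frac{\alpha^2 G^2 B_\epsilon^2}{\epsilon^2}\right)$, then the total number of iterations $tK$ is $O\!\left(\frac{\alpha^2 G^2 B_\epsilon^2}{\epsilon^2} \lceil \log_\alpha(\frac{\epsilon_0}{\epsilon})\rceil\right)$, which is exactly the claimed bound. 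One minor point to handle is the degenerate case $\V_\epsilon = \emptyset$: then $B_\epsilon = 0$ by convention, but in that situation $\w_0$ itself (or any feasible point, once $f(\w_0)-f_* \le \epsilon_0$ is combined with emptiness of the level set implying $\S_\epsilon = \Omega$ up to the relevant range) is already within the sublevel set, so the statement holds vacuously or trivially; I would note that the corollary is implicitly stated for the regime where $\V_\epsilon \neq \emptyset$, matching the hypothesis under which $\rho_\epsilon$ and Lemma~\ref{lem:lowf} are defined.

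I do not anticipate a genuine obstacle here, since this is a direct corollary — the only "hard part" is purely bookkeeping: making sure the inequality $\rho_\epsilon \geq \epsilon/B_\epsilon$ is applied in the correct direction (a larger $\rho_\epsilon$ means a smaller required $t$, so replacing $\rho_\epsilon$ by its lower bound $\epsilon/B_\epsilon$ only enlarges the sufficient value of $t$, which is safe), and confirming that both the lower bound $t \geq \frac{\alpha^2 G^2 B_\epsilon^2}{\epsilon^2}$ and the matching upper bound $t = O(\cdot)$ are needed precisely to pin down the complexity as $\Theta$ of that quantity times $K$. The proof can therefore be written in a few lines: invoke Lemma~\ref{lem:lowf} to get $\frac{\alpha^2 G^2}{\rho_\epsilon^2} \leq \frac{\alpha^2 G^2 B_\epsilon^2}{\epsilon^2}$, then invoke Theorem~\ref{thm:GDr} with this choice of $t$ and $K$, and read off the total iteration count $tK$.
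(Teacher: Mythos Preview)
Your proposal is correct and follows exactly the approach the paper intends: the paper presents Corollary~\ref{lem:3} without an explicit proof, simply noting that it follows from Theorem~\ref{thm:GDr} once the bound $\rho_\epsilon \geq \epsilon/B_\epsilon$ from Lemma~\ref{lem:lowf} is substituted, which is precisely the argument you outline.
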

We will compare this result with SG in Section~\ref{sec:diss}. Compared to the standard SG, the above improved result of RSG does require knowing strong knolwedge about $f$. In particular, one issue is that the above improved complexity is obtained by choosing $t =\frac{\alpha^2G^2B_\epsilon^2}{\epsilon^2}$, which requires knowing the order of magnitude of $B_\epsilon$, if not its exact value. To address the issue of unknown $B_\epsilon$ for general problems, in the next section, we consider different families  of problems that admit a local error bound condition and show that the requirement of knowing  $B_\epsilon$ is relaxed to knowing some particular parameters related to the local error bound.

\section{RSG for Some Classes of Non-smooth Non-strongly Convex Optimization}\label{sec:spec}
In this section, we consider a particular family of problems that admit local error bounds and show the improved iteration complexities of RSG compared to standard SG method.

\subsection{Complexity for the Problems with Local Error Bounds}\label{sec:ler}
We first define local error bound of the objective function.
\begin{definition}
\label{def:ler}
We say $f(\cdot)$ admits a \textbf{local error bound} on the $\epsilon$-sublevel set $\S_\epsilon$ if
\begin{equation}\label{eqn:ler}
\|\w - \w^*\|_2 \leq c(f(\w) - f_*)^{\theta}, \quad \forall \w\in\S_\epsilon,
\end{equation}
where $\w^*$ is the closet point in $\Omega_*$ to $\w$, $\theta\in(0,1]$ and $0<c<\infty$ are constants.
\end{definition}
Because $\S_{\epsilon_2}\subset\S_{\epsilon_1}$ for $\epsilon_2\leq\epsilon_1$, if \eqref{eqn:ler} holds for some $\epsilon$, it will always hold when $\epsilon$ decreases to zero with the same $\theta$ and $c$. Indeed, a smaller $\epsilon$ may induce a smaller value of $c$. {It is notable that the local error bound condition has been extensively studied in the community of optimization, mathematical programming and variational analysis~\citep{doi:10.1137/070689838,DBLP:journals/siamjo/Li10,DBLP:journals/mp/Li13,artacho:2008,kruger2015,Drusvyatskiy14,DBLP:journals/siamjo/LiM12,DBLP:conf/nips/HouZSL13,ZhouSo15,DBLP:conf/icml/ZhouZS15}, to name just a few of them. The value of $\theta$ has been exhibited for many problems. For certain problems, the value of $c$ is also computable (c.f. examples in~\cite{arxiv:1510.08234}).}

If the problem admits a local error bound like~(\ref{eqn:ler}), RSG can achieve a better iteration complexity than $O(1/\epsilon^2)$. In particular, the property~(\ref{eqn:ler}) implies
\begin{equation}
\label{eqn:Bebound}
B_\epsilon\leq c\epsilon^{\theta}.
\end{equation}
Replacing $B_\epsilon$ in Corollary~\ref{lem:3} by this upper bound and choosing $t=\frac{\alpha^2G^2c^2}{\epsilon^{2(1-\theta)}}$ in RSG if $c$ and $\theta$ are known, we obtain the following complexity of RSG.
\begin{corollary}\label{lem:leb_corr}
Suppose Assumption~\ref{ass:rsg} holds and $f(\cdot)$ admits a local error bound on $\S_\epsilon$.  The iteration complexity of RSG for obtaining an $2\epsilon$-optimal solution is $O\left(\frac{\alpha^2G^2c^2}{\epsilon^{2(1-\theta)}}\log_\alpha\left(\frac{\epsilon_0}{\epsilon}\right)\right)$ provided $t =\frac{\alpha^2G^2c^2}{\epsilon^{2(1-\theta)}}$ and $K = \lceil \log_\alpha(\frac{\epsilon_0}{\epsilon})\rceil$.
\end{corollary}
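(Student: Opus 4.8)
The plan is to derive Corollary~\ref{lem:leb_corr} directly from Corollary~\ref{lem:3} by substituting the bound $B_\epsilon \leq c\epsilon^\theta$ into the general complexity estimate, so the main content is just to justify that substitution carefully and track how the per-stage iteration count $t$ should be chosen.

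First I would establish the bound \eqref{eqn:Bebound}, namely $B_\epsilon \leq c\epsilon^\theta$. This follows immediately from the definition of $B_\epsilon$ in \eqref{eqn:keyB} together with the local error bound \eqref{eqn:ler}: every $\w \in \V_\epsilon$ satisfies $f(\w) - f_* = \epsilon$, and since $\V_\epsilon \subseteq \S_\epsilon$, inequality \eqref{eqn:ler} gives $\|\w - \w^*\|_2 \leq c(f(\w)-f_*)^\theta = c\epsilon^\theta$. Taking the maximum over $\w \in \V_\epsilon$ yields $B_\epsilon \leq c\epsilon^\theta$. (If $\V_\epsilon = \emptyset$ then $B_\epsilon = 0$ by convention and the bound is trivial.)

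Next I would plug this into Corollary~\ref{lem:3}. That corollary requires choosing $t$ with $\frac{\alpha^2 G^2 B_\epsilon^2}{\epsilon^2} \leq t = O\!\left(\frac{\alpha^2 G^2 B_\epsilon^2}{\epsilon^2}\right)$ and $K = \lceil \log_\alpha(\epsilon_0/\epsilon)\rceil$, and then guarantees an $2\epsilon$-optimal solution with total iteration complexity $O\!\left(\frac{\alpha^2 G^2 B_\epsilon^2}{\epsilon^2}\lceil\log_\alpha(\epsilon_0/\epsilon)\rceil\right)$. Using $B_\epsilon^2 \leq c^2\epsilon^{2\theta}$, the quantity $\frac{\alpha^2 G^2 B_\epsilon^2}{\epsilon^2}$ is at most $\frac{\alpha^2 G^2 c^2}{\epsilon^{2(1-\theta)}}$, so setting $t = \frac{\alpha^2 G^2 c^2}{\epsilon^{2(1-\theta)}}$ (assuming $c$ and $\theta$ are known) satisfies $t \geq \frac{\alpha^2 G^2 B_\epsilon^2}{\epsilon^2}$, which is the hypothesis actually needed for the convergence proof in Theorem~\ref{thm:GDr}. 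The total number of iterations is then $Kt = O\!\left(\frac{\alpha^2 G^2 c^2}{\epsilon^{2(1-\theta)}}\log_\alpha(\epsilon_0/\epsilon)\right)$, which is the claimed bound.

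The one subtlety worth flagging — and the only place where a little care is needed rather than a purely mechanical substitution — is the direction of the inequalities: Theorem~\ref{thm:GDr} (hence Corollary~\ref{lem:3}) only needs a \emph{lower} bound $t \geq \alpha^2 G^2/\rho_\epsilon^2$, and via Lemma~\ref{lem:lowf} ($\rho_\epsilon \geq \epsilon/B_\epsilon$) it suffices that $t \geq \alpha^2 G^2 B_\epsilon^2/\epsilon^2$. Since we are upper-bounding $B_\epsilon$, picking $t = \alpha^2 G^2 c^2/\epsilon^{2(1-\theta)} \geq \alpha^2 G^2 B_\epsilon^2/\epsilon^2$ is safe for correctness, while simultaneously this choice is only a constant-factor (indeed at most $c^2\epsilon^{2\theta}/B_\epsilon^2 \geq 1$ factor) larger than the optimal per-stage count, so the stated complexity is the honest cost of this (possibly conservative) choice. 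No genuinely hard step arises here; the work is entirely in having already proved Lemma~\ref{lem:1}, Lemma~\ref{lem:lowf}, and Theorem~\ref{thm:GDr}.
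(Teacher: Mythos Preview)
Your proposal is correct and follows exactly the same route as the paper: the paper simply observes that the local error bound implies $B_\epsilon\leq c\epsilon^\theta$ and then substitutes this into Corollary~\ref{lem:3} with the indicated choice of $t$. If anything, your treatment is more careful than the paper's one-line justification, since you explicitly verify that the upper bound on $B_\epsilon$ is compatible with the lower-bound requirement on $t$ from Theorem~\ref{thm:GDr}.
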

{\bf Remark:} If $t=\Theta(\frac{\alpha^2G^2}{\epsilon^{2(1-\theta)}})>\frac{\alpha^2G^2c^2}{\epsilon^{2(1-\theta)}}$, then the same order of iteration complexity remains.  {If one aims to find a point $\w$ such that $\|\w - \w^*\|_2\leq\epsilon$, we can  apply RSG to find a solution $\w$ such that $f(\w) - f_*\leq (\epsilon/c)^{1/\theta}\leq \epsilon$ (where the last inequality is due to $\theta\leq 1$ and assuming $c\geq 1$ without loss of generality). Then under the local error bound condition, we have $\|\w - \w^*\|_2\leq c(f(\w) - f_*)^{\theta}\leq \epsilon$. For finding a solution $\w$ such that $f(\w) - f_*\leq (\epsilon/c)^{1/\theta}\leq \epsilon$, RSG requires an iteration complexity of $\widetilde O(\frac{1}{\epsilon^{2(1-\theta)/\theta}})$. Therefore, in order to find a solution $\w$ such that $\|\w - \w^*\|_2\leq\epsilon$, the iteration complexity of RSG is $\widetilde O(\frac{1}{\epsilon^{2(1-\theta)/\theta}})$. }

Next, we will consider different convex optimization problems that admit a local error bound on $\S_\epsilon$ with different  $\theta$ and show the faster convergence of RSG when applied to these problems.


\subsection{Linear Convergence for Polyhedral Convex Optimization}\label{sec:poly}
In this subsection, we consider a special family of non-smooth and non-strongly convex problems where the epigraph of $f(\cdot)$ over $\Omega$ is a polyhedron. In this case, we call \eqref{eqn:prob0} a \textbf{polyhedral convex minimization}
problem. We show that, in polyhedral convex minimization problem, $f(\cdot)$ has a linear growth property and admits a local error bound with $\theta=1$ so that $B_\epsilon\leq c\epsilon$ for a constant $c<\infty$.

\begin{lemma}[Polyhedral Error Bound Condition]\label{lem:polyeb}
Suppose $\Omega$ is a polyhedron and the epigraph of $f(\cdot)$ is also polyhedron. There exists a constant $\kappa>0$  such that
\[
\|\w  - \w^*\|_2\leq \frac{f(\w)-f_*}{\kappa}, \quad\forall \w\in\Omega.
\]
Thus, $f(\cdot)$ admits a local error bound on $\S_\epsilon$ with $\theta=1$ and $c=\frac{1}{\kappa}$~\footnote{In fact, this property of $f(\cdot)$ is a global error bound on $\Omega$.} (so $B_\epsilon \leq \frac{\epsilon}{\kappa}$) for any $\epsilon>0$.
\end{lemma}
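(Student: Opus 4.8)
The plan is to exploit the polyhedral structure of the epigraph of $f$ to reduce the claimed inequality to a finite statement about the vertices and extreme rays of a polyhedron, and then invoke a Hoffman-type bound. First I would write the epigraph $\mathrm{epi}(f):=\{(\w,r)\in\Omega\times\R: f(\w)\leq r\}$, which by assumption is a polyhedron in $\R^{d+1}$ (note that intersecting with the polyhedron $\Omega\times\R$ keeps it polyhedral). By the Minkowski--Weyl decomposition theorem, $\mathrm{epi}(f)$ can be written as the sum of the convex hull of finitely many vertices $(\v_i,s_i)$, $i=1,\dots,m$, and the conic hull of finitely many extreme rays $(\d_j,t_j)$, $j=1,\dots,n$, where necessarily $t_j\geq 0$ (since $r$ is bounded below on each fiber but unbounded above, the recession directions have nonnegative last coordinate). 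Since $f_*=\min_\w f(\w)$ is attained (Assumption~\ref{ass:rsg}.c guarantees $\Omega_*\neq\emptyset$, and boundedness of $\Omega_*$ plus the earlier cited Corollary~8.7.1 of \citep{rockafellar1970convex} makes sublevel sets bounded), $f$ is bounded below, so $f_*$ is finite and equals $\min_i s_i$.

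Next I would turn the desired inequality into a statement about linear inequalities. The claim $\|\w-\w^*\|_2\leq (f(\w)-f_*)/\kappa$ says that the distance from $\w$ to the optimal set $\Omega_* = \{\u\in\Omega: f(\u)=f_*\}$ is controlled linearly by the objective residual. Both $\Omega_*$ and, for each fixed level $r\geq f_*$, the sublevel set $\S_{r-f_*}=\{\w\in\Omega: f(\w)\leq r\}$ are polyhedra described by a common system of linear inequalities $A\w\leq \b$ together with $f(\w)\leq r$ (the latter being itself a finite conjunction of linear inequalities because $\mathrm{epi}(f)$ is polyhedral — $f$ is piecewise linear on $\Omega$). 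Hoffman's lemma gives a constant $L$, depending only on the coefficient matrix and not on the right-hand side, such that $\mathrm{dist}(\w,\Omega_*)\leq L\cdot\|(A\w-\b)_+\|$ holds for all $\w$; I would then bound the residual $\|(A\w-\b)_+\|$ (the only active part being the $f(\w)\leq f_*$ constraint once $\w\in\Omega$ and $f(\w)\geq f_*$) by $f(\w)-f_*$ up to a constant factor, which yields the result with $\kappa=1/L'$ for an appropriate $L'$. An alternative, more self-contained route: restrict attention to $\w\in\Omega$ with $f(\w)\leq f_*+1$ (a bounded polytope), write $\w$ as a convex combination of vertices of this polytope, and use that on each vertex the ratio $\|\w-\w^*\|_2/(f(\w)-f_*)$ is either finite or the numerator vanishes, taking $\kappa$ to be the minimum of the finitely many finite ratios; then extend to all of $\Omega$ by a scaling/recession argument using that directions of recession of $\Omega$ along which $f$ stays at $f_*$ are exactly recession directions of $\Omega_*$.

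Once the error bound $\|\w-\w^*\|_2\leq (f(\w)-f_*)/\kappa$ is established for all $\w\in\Omega$, the remaining assertions are immediate: restricting to $\w\in\S_\epsilon$ shows $f(\cdot)$ admits a local error bound in the sense of Definition~\ref{def:ler} with $\theta=1$ and $c=1/\kappa$; and combining with the bound $B_\epsilon\leq c\epsilon^\theta$ from~\eqref{eqn:Bebound} (equivalently, taking $\w\in\V_\epsilon$ in the displayed inequality) gives $B_\epsilon\leq \epsilon/\kappa$.

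I expect the main obstacle to be handling the \emph{unbounded} case — the paper explicitly generalizes \citep{DBLP:journals/mp/GilpinPS12} from bounded polytopes to unbounded polyhedra. On a bounded polytope the vertex-enumeration argument is clean, but when $\Omega$ is unbounded one must argue carefully that the linear growth constant $\kappa$ obtained from the vertices is not destroyed by the recession directions: one needs that along any recession direction $\d$ of $\Omega$, either $f$ increases at a positive rate (so the ratio only improves as $\|\w\|\to\infty$) or $f$ stays constant at $f_*$ and $\d$ is a recession direction of $\Omega_*$ (so $\mathrm{dist}(\w,\Omega_*)$ does not blow up). Making this dichotomy precise via the Minkowski--Weyl decomposition of $\mathrm{epi}(f)$, and checking that the finitely many extreme rays exhaust all bad behavior, is the technical heart of the proof; the Hoffman-lemma route sidesteps the explicit case analysis but requires one to correctly identify the finite linear system describing $\Omega_*$ and to verify that the only binding residual is the objective-gap term.
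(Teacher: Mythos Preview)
Your proposal is correct, and both routes you sketch would work. The paper's own proof follows your second, ``self-contained'' route rather than the Hoffman one: it applies Minkowski--Weyl to $\mathrm{epi}(f)$ to obtain vertices $(\w_i,f_i)$ and rays $(\u_j,s_j)$ with $s_j\geq 0$, splits the vertices into optimal ($f_i=f_*$) and non-optimal ($f_i>f_*$) and the rays into those with $s_j>0$ and those with $s_j=0$, and then, for a given $\w$ with a fixed representation, bounds $\mathrm{dist}(\w,\Omega_*)$ by explicitly constructing a candidate optimal point from the optimal vertices together with the $s_j=0$ rays. Two explicit constants emerge --- $\delta=(f_N-f_*)/\max_{i\leq N,\,\u\in\Omega_*}\|\w_i-\u\|$ controlling the contribution of the non-optimal vertices and $\sigma=\min_{j:\,s_j>0,\,\u_j\neq 0}s_j/\|\u_j\|$ controlling the ``increasing'' rays --- and the final constant is $1/\kappa=1/\delta+1/\sigma$. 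This is precisely the recession-direction dichotomy you anticipated (rays with $s_j=0$ are recession directions of $\Omega_*$; rays with $s_j>0$ increase $f$ at a positive linear rate), carried out in full.

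Your Hoffman-lemma route is a genuine and arguably cleaner alternative: once you observe that $f$ is piecewise linear on $\Omega$, say $f(\w)=\max_k(a_k^\top\w+b_k)$, the set $\Omega_*$ is cut out by the inequalities describing $\Omega$ together with $a_k^\top\w+b_k\leq f_*$ for all $k$; for $\w\in\Omega$ the only residuals are $(a_k^\top\w+b_k-f_*)_+\leq f(\w)-f_*$, and Hoffman's bound yields the result immediately. The trade-off is that $\kappa$ is then an unspecified Hoffman constant, whereas the paper's argument writes $\kappa$ explicitly in terms of the finite extreme data of the epigraph (and does so simultaneously for an arbitrary norm, as noted in the remark following the lemma).
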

{\bf Remark:} {The above inequality is also known as weak sharp minimum condition in literature~\citep{doi:10.1137/0331063,doi:10.1137/S0363012996301269,Ferris1991,DBLP:journals/mp/BurkeD02,DBLP:journals/mp/BurkeD05,DBLP:journals/mp/BurkeD09}. A proof of Lemma~\ref{lem:polyeb} is given by~\cite{doi:10.1137/0331063}. We also provide a proof in~\citep{DBLP:journals/corr/arXiv:1512.03107}.} We remark that the above result can be extended to any valid norm to measure the distance between $\w$ and $\w_*$.  Lemma~\ref{lem:polyeb}  generalizes Lemma 4 in \citep{DBLP:journals/mp/GilpinPS12}, which requires $\Omega$ to be a bounded polyhedron, to a similar result where $\Omega$ can be an unbounded polyhedron. This generalization is simple but useful because it helps the development of efficient algorithms based on this error bound for unconstrained problems without artificially including a box constraint.

Lemma~\ref{lem:polyeb} provides the basis for RSG to achieve a linear convergence for the polyhedral convex minimization problems. In fact, the following linear convergence of RSG can be obtained if we plugin the values of $\theta=1$ and $c=\frac{1}{\kappa}$ into Corollary~\ref{lem:leb_corr}.
\begin{corollary}\label{cor:5}
Suppose Assumption 1 holds and \eqref{eqn:prob0} is a polyhedral convex minimization
problem. The iteration complexity of RSG for obtaining an $\epsilon$-optimal solution is $O(\frac{\alpha^2G^2}{\kappa^2}\lceil \log_\alpha(\frac{\epsilon_0}{\epsilon})\rceil)$ provided $t =  \frac{\alpha^2 G^2}{\kappa^2}$ and $K = \lceil \log_\alpha(\frac{\epsilon_0}{\epsilon})\rceil$.
\end{corollary}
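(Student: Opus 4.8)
The plan is to obtain Corollary~\ref{cor:5} as an immediate consequence of Corollary~\ref{lem:leb_corr} together with Lemma~\ref{lem:polyeb}. First I would invoke Lemma~\ref{lem:polyeb}: since $\Omega$ is a polyhedron and the epigraph of $f$ is polyhedral, there is a constant $\kappa>0$ with $\|\w-\w^*\|_2\le (f(\w)-f_*)/\kappa$ for all $\w\in\Omega$; in particular $f$ admits a local error bound on $\S_\epsilon$ in the sense of Definition~\ref{def:ler} with exponent $\theta=1$ and constant $c=1/\kappa$, and this even holds globally on $\Omega$. Consequently $B_\epsilon\le c\epsilon^\theta=\epsilon/\kappa$ by~\eqref{eqn:Bebound}.

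Next I would simply substitute $\theta=1$ and $c=1/\kappa$ into Corollary~\ref{lem:leb_corr}. The per-stage iteration count prescribed there is $t=\frac{\alpha^2G^2c^2}{\epsilon^{2(1-\theta)}}$, which at $\theta=1$ collapses to $t=\frac{\alpha^2G^2}{\kappa^2}$ --- crucially a quantity that does not depend on the target accuracy $\epsilon$. With $K=\lceil\log_\alpha(\epsilon_0/\epsilon)\rceil$ stages, Theorem~\ref{thm:GDr} and Corollary~\ref{lem:leb_corr} then guarantee a $2\epsilon$-optimal solution after a total of $tK=O\!\left(\frac{\alpha^2G^2}{\kappa^2}\lceil\log_\alpha(\epsilon_0/\epsilon)\rceil\right)$ iterations; replacing $\epsilon$ by $\epsilon/2$, which only affects the absorbed constant, yields an $\epsilon$-optimal solution with the same order of complexity, i.e.\ $O(\log(1/\epsilon))$ iterations --- linear convergence.

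Since this reduction is mechanical, the only real content is Lemma~\ref{lem:polyeb} itself, whose proof is deferred to the appendix; were one to prove the corollary from scratch, that is where the work lies. The expected route is a Hoffman-type / Minkowski--Weyl argument: write the polyhedral set $\mathrm{epi}(f)\cap(\Omega\times\R)$ as the convex hull of finitely many vertices plus a finitely generated recession cone, observe that on each face the function is affine, and use the compactness of $\Omega_*$ (Assumption~\ref{ass:rsg}.c) to pass from a piecewise-affine lower bound to the uniform linear growth constant $\kappa$. Handling the case of an \emph{unbounded} polyhedron $\Omega$ --- the generalization over \citep{DBLP:journals/mp/GilpinPS12} --- is the one point requiring care, and it is controlled by the recession-cone part of the decomposition. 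The remaining steps, namely plugging the constants into an already-proved corollary and the harmless $\epsilon\mapsto\epsilon/2$ rescaling, are routine.
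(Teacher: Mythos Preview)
Your proposal is correct and matches the paper's approach exactly: the paper derives Corollary~\ref{cor:5} by plugging $\theta=1$ and $c=1/\kappa$ from Lemma~\ref{lem:polyeb} into Corollary~\ref{lem:leb_corr}, with the real work deferred to the appendix proof of Lemma~\ref{lem:polyeb} via a Minkowski--Weyl decomposition, just as you outline. The paper also remarks (without carrying it out) that one could alternatively prove it directly by replacing $\w^\dagger_{k-1,\epsilon}$ with $\w^*_{k-1}$ and $\rho_\epsilon$ with $\kappa$ in the proof of Theorem~\ref{thm:GDr}, but the route you take is the one actually used.
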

We want to point out that Corollary~\ref{cor:5} can be proved directly by replacing $\w^\dagger_{k-1,\epsilon}$ by $\w^*_{k-1}$ and replacing $\rho_\epsilon$ by $\kappa$ in the proof of Theorem~\ref{thm:GDr}. Here, we derive it as a corollary of a more general result. We also want to mention that, as shown by \cite{2015arXivJames}, the linear convergence rate in Corollary~\ref{cor:5} can  be also obtained by a SG method for the historically best solution, provided  $f_*$ is known.

\paragraph{\bf Examples.} Many non-smooth and non-strongly convex machine learning problems satisfy the assumptions of Corollary~\ref{cor:5}, for example, {\bf $\ell_1$ or $\ell_\infty$ constrained or regularized piecewise linear loss minimization}. In many machine learning tasks (e.g., classification and regression), there exists a set of data $\{(\x_i,y_i)\}_{i=1,2,\dots,n}$ and one often needs to solve the following empirical risk minimization problem
\begin{eqnarray*}
\min_{\w\in\R^d}f(\w) \triangleq \frac{1}{n}\sum_{i=1}^n\ell(\w^{\top}\x_i, y_i) + R(\w),
\end{eqnarray*}
where $R(\w)$ is a regularization term and $\ell(z,y)$ denotes a loss function. We consider a special case where (a) $R(\w)$ is a $\ell_1$ regularizer, $\ell_\infty$ regularizer or an indicator function of a $\ell_1$/$\ell_\infty$ ball centered at zero; and (b) $\ell(z,y)$ is any piecewise linear loss function, including hinge loss $\ell(z,y)= \max(0, 1-yz)$, absolute loss $\ell(z,y) = |z- y|$, $\epsilon$-insensitive loss $\ell(z,y) = \max(|z-y| - \epsilon, 0)$, and etc~\citep{DBLP:journals/ml/YangMJZ14Non}. It is easy to show that the epigraph of $f(\w)$ is a polyhedron if $f(\w)$ is defined as a sum of  any of these regularization terms and any of these loss functions. In fact, a piecewise linear loss functions can be generally written as
\begin{equation}
\label{eq:polyell}
\ell(\w^{\top}\x, y) = \max_{1\leq j\leq m}a_{j}\w^{\top}\x+b_j,
\end{equation}
where $(a_j,b_j)$ for $j=1,2,\dots,m$ are finitely many pairs of scalars. The formulation \eqref{eq:polyell} indicates that $\ell(\w^{\top}\x, y)$ is a piecewise affine function so that its epigraph  is a polyhedron. In addition, the $\ell_1$ or $\ell_\infty$ norm is also a polyhedral function because we can represent them as
\[
\|\w\|_1 = \sum_{i=1}^d\max(w_i, -w_i), \quad \|\w\|_\infty=\max_{1\leq i\leq d}|w_i|  = \max_{1\leq i\leq d}\max(w_i, -w_i).
\]
Since the sum of finitely many polyhedral functions is also a polyhedral function, the epigraph of $f(\w)$ is a polyhedron. 

Another important family of problems whose objective function has a polyhedral epigraph is {\bf submodular function minimization}.  Let $V = \{1,\ldots, d\}$ be a set and $2^V$ denote its power set. A submodular function $F(A): 2^{V}\rightarrow\R$ is a set function such that $F(A) + F(B)\geq F(A\cup B)+ F(A\cap B)$ for all subsets $A, B\subseteq V$ and $F(\emptyset)=0$. A submodular function minimization can be cast into a non-smooth convex optimization using the Lov\'{a}sz extension~\citep{DBLP:journals/ftml/Bach13}. In particular,  let the base polyhedron $B(F)$  be defined as
\[
B(F) = \{\s\in\R^d, \s(V) = F(V), \forall A\subseteq V, \s(A)\leq F(A)\},
\]
where $\s(A) = \sum_{i\in A}s_i$.  Then the Lov\'{a}sz extension of $F(A)$ is  $f(\w) = \max_{\s\in B(F)}\w^{\top}\s$, and $\min_{A\subseteq V}F(A) = \min_{\w\in[0,1]^d}f(\w)$. As a result, a submodular function minimization is essentially a non-smooth and non-strongly convex optimization with a polyhedral epigraph. 

\subsection{Improved Convergence for Locally Semi-Strongly Convex Problems}
First, we give a definition of local semi-strong convexity. 
\begin{definition}\label{def:ssc}
A function $f(\w)$ is semi-strongly convex on the $\epsilon$-sublevel set $\S_\epsilon$ if there exists $\lambda>0$ such that
\begin{equation}\label{eqn:strong}
\frac{\lambda}{2}\|\w - \w^* \|_2^2\leq f(\w) - f(\w^*),\quad\forall \w\in\S_\epsilon
\end{equation}
where $\w^*$ is the closest point to $\w$ in the optimal set.
\end{definition}
We refer to the property~(\ref{eqn:strong}) as \emph{local} semi-strong convexity when $\S_\epsilon\neq\Omega$. The two papers \citep{DBLP:journals/corr/GongY14,DBLP:journals/corr/nesterov16linearnon} have explored the semi-strong convexity on the whole domain $\Omega$ to prove linear convergence of smooth optimization problems. In~\citep{DBLP:journals/corr/nesterov16linearnon}, the inequality~(\ref{eqn:strong}) is also called \textbf{second-order growth property}. They have also shown that a class of problems satisfy~(\ref{eqn:strong}) (see examples given below).
The inequality \eqref{eqn:strong} indicates that $f(\cdot)$ admits a local error bound on $\S_\epsilon$ with $\theta=\frac{1}{2}$ and $c=\sqrt{\frac{2}{\lambda}}$, which leads to the following the corollary about the iteration complexity of RSG for locally semi-strongly convex problems. 
\begin{corollary} Suppose Assumption~\ref{ass:rsg} holds and $f(\w)$ is semi-strongly convex on  $\S_\epsilon$. Then $B_\epsilon \leq \sqrt{\frac{2\epsilon}{\lambda}}$ \footnote{Recall \eqref{eqn:Bebound}.}
and the iteration complexity of RSG for obtaining an $2\epsilon$-optimal solution is  $O(\frac{2\alpha^2G^2}{\lambda \epsilon}\lceil \log_\alpha(\frac{\epsilon_0}{\epsilon})\rceil)$ provided $t =  \frac{2\alpha^2 G^2}{\lambda\epsilon}$ and $K = \lceil \log_\alpha(\frac{\epsilon_0}{\epsilon})\rceil$.
\end{corollary}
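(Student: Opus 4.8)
The plan is to reduce the statement to a direct application of Corollary~\ref{lem:leb_corr} by recognizing local semi-strong convexity as a special case of the local error bound condition~\eqref{eqn:ler}. First I would rewrite the defining inequality~\eqref{eqn:strong}: for any $\w\in\S_\epsilon$, since $\w^*$ is by definition the closest point to $\w$ in the optimal set $\Omega_*$, we have $f(\w^*)=f_*$, so \eqref{eqn:strong} reads $\frac{\lambda}{2}\|\w-\w^*\|_2^2\leq f(\w)-f_*$. Rearranging gives $\|\w-\w^*\|_2\leq\sqrt{2/\lambda}\,(f(\w)-f_*)^{1/2}$ for all $\w\in\S_\epsilon$, which is exactly the local error bound~\eqref{eqn:ler} with $\theta=\frac{1}{2}$ and $c=\sqrt{2/\lambda}$.

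Next I would invoke the implication~\eqref{eqn:Bebound}, valid whenever the local error bound holds, to conclude $B_\epsilon\leq c\epsilon^{\theta}=\sqrt{2\epsilon/\lambda}$, which is the first claim. Then the iteration-complexity claim follows by substituting $\theta=\frac{1}{2}$ and $c=\sqrt{2/\lambda}$ into Corollary~\ref{lem:leb_corr}: the prescribed per-stage iteration count becomes $t=\frac{\alpha^2G^2c^2}{\epsilon^{2(1-\theta)}}=\frac{2\alpha^2G^2}{\lambda\epsilon}$, the number of stages is $K=\lceil\log_\alpha(\epsilon_0/\epsilon)\rceil$, and the total number of iterations needed for a $2\epsilon$-optimal solution is $O\!\left(\frac{2\alpha^2G^2}{\lambda\epsilon}\lceil\log_\alpha(\epsilon_0/\epsilon)\rceil\right)$, as stated.

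Since every ingredient — Corollary~\ref{lem:leb_corr}, the implication~\eqref{eqn:Bebound}, and the underlying general convergence Theorem~\ref{thm:GDr} — is already in hand, there is essentially no obstacle: the corollary is a straightforward specialization. The only points that warrant a line of care are the identification $f(\w^*)=f_*$ (immediate from the definitions of $\w^*$ and $f_*$) and the observation that~\eqref{eqn:strong} is assumed only on $\S_\epsilon$ rather than globally, which is all that is required since Corollary~\ref{lem:leb_corr} itself only needs the local error bound on $\S_\epsilon$.
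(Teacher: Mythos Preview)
Your proposal is correct and follows exactly the approach the paper takes: the paragraph preceding the corollary explicitly notes that inequality~\eqref{eqn:strong} means $f(\cdot)$ admits the local error bound on $\S_\epsilon$ with $\theta=\frac{1}{2}$ and $c=\sqrt{2/\lambda}$, and the corollary is then obtained by specializing Corollary~\ref{lem:leb_corr} (and~\eqref{eqn:Bebound}) to these values.
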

{\bf Remark:} 
Here, we obtain an $\widetilde O(1/\epsilon)$ iteration complexity ($\widetilde O(\cdot)$ suppresses constants and logarithmic terms) only with local semi-strong convexity. It is obvious that strong convexity implies local semi-strong convexity~\citep{hazan-20110-beyond} but not vice versa. 


\paragraph{\bf Examples} Consider a family of functions in the form of $f(\w)  = h(X\w) + r(\w)$, where $X\in\mathbb{R}^{n\times d}$, $h(\cdot)$ is \emph{strongly convex} on any \emph{compact set} and  $r(\cdot)$ has a polyhedral epigraph. According to~\citep{DBLP:journals/corr/GongY14,DBLP:journals/corr/nesterov16linearnon}, such a function $f(\w)$ satisfies~(\ref{eqn:strong}) for any $\epsilon\leq \epsilon_0$ with a constant value for $\lambda$. Although smoothness is assumed for $h(\cdot)$ in~\citep{DBLP:journals/corr/GongY14,DBLP:journals/corr/nesterov16linearnon}, we find that it is not necessary for proving~(\ref{eqn:strong}). We state this result as the lemma below.

\begin{lemma} Suppose Assumption~\ref{ass:rsg} holds, $\Omega=\{\w\in \mathbb{R}^{d}|C\w\leq \mathbf{b}\}$ with $C\in\mathbb{R}^{k\times d}$ and $\mathbf{b}\in\mathbb{R}^k$, and $f(\w)  = h(X\w) + r(\w)$ where $h:\mathbb{R}^n\rightarrow\mathbb{R}$ satisfies $\text{dom}(h)=\mathbb{R}^k$ and is a strongly convex function on any compact set in $\mathbb{R}^n$, and $r(\w)$  has a polyhedral epigraph. Then, $f(w)$ satisfies (\ref{eqn:strong}) for any
$\epsilon\leq \epsilon_0$.
\end{lemma}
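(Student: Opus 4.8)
The plan is to reduce \eqref{eqn:strong} to Hoffman's error bound for systems of linear inequalities, using the strong convexity of $h$ only to control the ``non-polyhedral direction'' $X\w$. It suffices to prove the bound on $\S_{\epsilon_0}$ with a constant $\lambda$ not depending on $\epsilon$, since $\S_\epsilon\subseteq\S_{\epsilon_0}$ for every $\epsilon\le\epsilon_0$. First note that $\S_{\epsilon_0}$ is compact --- bounded by Corollary~8.7.1 of~\citep{rockafellar1970convex} (as recalled in Section~\ref{sec:main}) and closed since $f$ is lower semicontinuous --- so its linear image $\mathcal C:=\{X\w:\w\in\S_{\epsilon_0}\}$ is a compact convex set; fix once and for all a modulus $\mu>0$ of strong convexity of $h$ on $\mathcal C$. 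Because $X(\S_\epsilon)\subseteq\mathcal C$ for every $\epsilon\le\epsilon_0$, this same $\mu$ is usable on each $\S_\epsilon$.

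\textbf{Step 1 ($\Omega_*$ is a polyhedron).} For $\w_1,\w_2\in\Omega_*$ the midpoint $\tfrac{\w_1+\w_2}{2}$ is again in $\Omega_*$; expanding $f_*=f\big(\tfrac{\w_1+\w_2}{2}\big)$ with the convexity of $r$ and the $\mu$-strong convexity of $h$ on $\mathcal C\ni X\w_1,X\w_2$ forces $X\w_1=X\w_2$. Hence $X\w$ equals a constant vector $\mathbf{t}_*$ on $\Omega_*$, and then $r(\w)=f(\w)-h(X\w)=f_*-h(\mathbf{t}_*)=:r_*$ is also constant on $\Omega_*$. Conversely, any $\w\in\Omega$ with $X\w=\mathbf{t}_*$ and $r(\w)\le r_*$ has $f(\w)\le f_*$, so $\Omega_*=\{\w:C\w\le\b,\ X\w=\mathbf{t}_*,\ r(\w)\le r_*\}$. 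Since $r$ has a polyhedral epigraph it is a polyhedral convex function, so on $\Omega\subseteq\text{dom}(r)$ we may write $r(\w)=\max_{1\le i\le l}(\langle\a_i,\w\rangle+\beta_i)$, whence $\{\w\in\Omega:r(\w)\le r_*\}=\{\w\in\Omega:\langle\a_i,\w\rangle\le r_*-\beta_i,\ 1\le i\le l\}$ and $\Omega_*=\{\w:A\w\le\d_*\}$, where $A$ stacks the rows of $C$, of $X$, of $-X$, and the $\a_i^\top$. The matrix $A$ depends only on the data $C,X,r$, not on $\epsilon$ or on $\mathbf{t}_*,r_*$.

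\textbf{Step 2 (Hoffman bound and conclusion).} By Hoffman's lemma there is $\gamma>0$, depending only on $A$, with $\|\w-\w^*\|_2=\mathrm{dist}(\w,\Omega_*)\le\gamma\big\|[A\w-\d_*]_+\big\|_2$ for all $\w$, where $\w^*$ is the closest point of $\Omega_*$ to $\w$. For $\w\in\Omega$ the rows from $C\w\le\b$ contribute zero residual, the two rows $\pm X\w\le\pm\mathbf{t}_*$ contribute exactly $\|X\w-\mathbf{t}_*\|_2$, and the rows $\langle\a_i,\w\rangle\le r_*-\beta_i$ contribute at most $\sqrt{l}\,[r(\w)-r_*]_+$, so
\[
\|\w-\w^*\|_2\ \le\ \gamma\big(\|X\w-\mathbf{t}_*\|_2+\sqrt{l}\,[r(\w)-r_*]_+\big).
\]
To bound the two terms by $f(\w)-f_*$, use optimality of $\w^*$: since $h\circ X$ is finite-valued and $r$ and the indicator $\iota_\Omega$ are polyhedral, the subdifferential sum and chain rules apply at $\w^*$, giving $s^*\in\partial h(\mathbf{t}_*)$, $q^*\in\partial r(\w^*)$, $v^*\in\N_\Omega(\w^*)$ with $X^\top s^*+q^*+v^*=0$. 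Adding the $\mu$-strong-convexity inequality for $h$ at $X\w,\mathbf{t}_*\in\mathcal C$ to the subgradient inequality for $r$ at $\w^*$, and using $\langle v^*,\w-\w^*\rangle\le0$, gives
\[
f(\w)-f_*\ \ge\ \tfrac{\mu}{2}\|X\w-\mathbf{t}_*\|_2^2,\qquad\text{hence}\qquad \|X\w-\mathbf{t}_*\|_2\le\sqrt{2(f(\w)-f_*)/\mu}.
\]
Moreover $h(X\w)-h(\mathbf{t}_*)\ge\langle s^*,X\w-\mathbf{t}_*\rangle$ yields $r(\w)-r_*=(f(\w)-f_*)-\big(h(X\w)-h(\mathbf{t}_*)\big)\le(f(\w)-f_*)+\|s^*\|_2\|X\w-\mathbf{t}_*\|_2$. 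Substituting both bounds and using $0\le f(\w)-f_*\le\epsilon_0$ on $\S_{\epsilon_0}$ (so $f(\w)-f_*\le\sqrt{\epsilon_0}\sqrt{f(\w)-f_*}$) gives $\|\w-\w^*\|_2\le c'\sqrt{f(\w)-f_*}$ for a constant $c'$ assembled from $\gamma,l,\mu,\|s^*\|_2,\epsilon_0$; squaring is exactly \eqref{eqn:strong} with $\lambda=2/c'^2$.

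\textbf{Main obstacle.} The crux is Step~1, the polyhedrality of $\Omega_*$, and it is where both hypotheses are genuinely used: (i) strong convexity of $h$ --- not mere convexity --- forces $X\w$ to be constant on $\Omega_*$, and the argument must be applied along segments inside $\Omega_*\subseteq\S_{\epsilon_0}$, which all lie in the compact set $\mathcal C$ where the uniform modulus $\mu$ is available; (ii) ``$r(\w)\le r_*$'' must become finitely many linear inequalities valid on $\Omega$, which is precisely the statement that a function with polyhedral epigraph is a pointwise maximum of finitely many affine functions over a polyhedral domain. A secondary point is that the subdifferential calculus in Step~2 needs no constraint qualification, because $h\circ X$ is finite-valued (hence continuous) and $r,\iota_\Omega$ are polyhedral, so the polyhedral sum rule applies. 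The rest --- Hoffman's constant, the $\sqrt{l}$ and $\sqrt{2}$ factors, and the $\epsilon_0$ substitution --- is routine bookkeeping.
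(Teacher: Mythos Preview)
Your proof is correct and follows essentially the same route as the paper: show $\Omega_*$ is a polyhedron (using strong convexity of $h$ to force $X\w$ constant on $\Omega_*$), apply Hoffman's bound, then control the Hoffman residuals by $f(\w)-f_*$ via the first-order optimality conditions and the strong-convexity inequality $f(\w)-f_*\ge\tfrac{\mu}{2}\|X\w-\mathbf t_*\|_2^2$.

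The only substantive difference is cosmetic: the paper first reduces to the case $r(\w)=\q^\top\w$ by introducing an auxiliary scalar variable $w\ge r(\w)$, so that $\Omega_*$ is described by $X\w=\r^*,\ \q^\top\w=s^*,\ C\w\le\b$ and the Hoffman residual has a single scalar term $(\q^\top\w-s^*)^2$ in place of your $l$ inequalities $\langle\a_i,\w\rangle\le r_*-\beta_i$. Your direct handling via the piecewise-linear representation of $r$ avoids that reduction at the cost of the extra $\sqrt{l}$ factor. Both arrive at the same bound on $[r(\w)-r_*]_+$ (respectively $|\q^\top\w-s^*|$) by writing it as $(f(\w)-f_*)-(h(X\w)-h(\mathbf t_*))$ and using $h(X\w)-h(\mathbf t_*)\ge\langle s^*,X\w-\mathbf t_*\rangle$. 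One small point worth making explicit in your write-up: the element $s^*\in\partial h(\mathbf t_*)$ may vary with $\w^*$ (hence with $\w$), so to get a genuine constant $c'$ you should replace $\|s^*\|_2$ by $\sup\{\|s\|_2:s\in\partial h(\mathbf t_*)\}$, which is finite since $\mathrm{dom}(h)=\mathbb R^n$; the paper does exactly this with its $\|\partial h(\r^*)\|$.
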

The proof of this lemma can be found in~\citep{DBLP:journals/corr/GongY14,DBLP:journals/corr/nesterov16linearnon,doi:10.1137/130950288}. For example, it is almost identical to the proof of Lemma 1 in \citep{DBLP:journals/corr/GongY14} which assumes $h(\cdot)$ is smooth). However, a similar result holds without the smoothness of $h(\cdot)$.

The function of this type covers some commonly used loss functions and regularization terms in machine learning and statistics. For example, we can consider 
{\bf robust regression with/without $\ell_1$ regularizer}~\citep{DBLP:journals/tit/XuCM10,Bertsimas14characterizationof}:
\begin{equation}\label{eqn:rr}
\min_{\w\in\Omega}\frac{1}{n}\sum\limits_{i=1}^n|\x_i^{\top}\w - y_i|^p + \lambda\|\w\|_1,
\end{equation}
where $p\in(1,2)$,  $\x_i\in\R^d$ denotes the feature vector and $y_i$ is the target output. The objective function is in the form of $h(X\w) + r(\w)$ where $X$ is a $n\times d$ matrix with $\x_1,\x_2,\dots,\x_n$ being its rows and $h(\u):=\sum_{i=1}^n|u_i-y_i|^p$. According to~\citep{Goebel_localstrong}, $h(\u)$ is a  strongly convex function on any compact set so that the objective function above is semi-strongly convex on $\S_\epsilon$ for any $\epsilon\leq \epsilon_0$.



\subsection{Improved Convergence for  Convex Problems with KL property}

Lastly, we consider a family of non-smooth functions with a local Kurdyka-\L ojasiewicz (KL) property. The definition of KL property is given below.
 \begin{definition}\label{def:KL}
 The function $f(\w)$ has the Kurdyka - \L ojasiewicz (KL) property at $\bar\w$ if there exist $\eta\in(0,\infty]$, a neighborhood $U_{\bar\w}$ of $\bar\w$ and a continuous concave function $\varphi:[0, \eta) \rightarrow \R_+$ such that  (i) $\varphi(0) = 0$; (ii) $\varphi$ is continuous on $(0, \eta)$; (iii) for all $s\in(0, \eta)$, $\varphi'(s)>0$; (iv) and for all $\w\in U_{\bar\w}\cap \{\w: f(\bar\w)< f(\w)< f(\bar\w)+\eta\}$, the Kurdyka - \L ojasiewicz (KL) inequality holds
\begin{eqnarray}
\label{eq:KLineq}
 \varphi'(f(\w) - f(\bar\w))\|\partial f(\w)\|_2\geq 1,
 \end{eqnarray}
 where $\|\partial f(\w)\|_2:=\min_{\mathbf g\in\partial f(\w)}  \|\mathbf g\|_2$.
 \end{definition}
 The function $\varphi$ is called the \textbf{desingularizing function} of $f$ at $\bar\w$, which sharpens  the function $f(\w)$  by reparameterization. An important desingularizing function is in the form of $\varphi(s) = cs^{1-\beta}$ for some $c>0$ and $\beta\in[0,1)$, by which, \eqref{eq:KLineq} gives the KL inequality
 \[
\|\partial f(\w)\|_2\geq  \frac{1}{c(1-\beta)}(f(\w) - f(\bar\w))^{\beta}.
 \]
Note that all semi-algebraic functions satisfy the KL property at any point~\citep{Bolte:2014:PAL:2650160.2650169}. Indeed, all the concrete examples given before satisfy  the  Kurdyka - \L ojasiewicz property. For more discussions about the KL property, we refer readers  to~\citep{Bolte:2014:PAL:2650160.2650169,journals/siamjo/BolteDLS07,DBLP:journals/corr/SchneiderU14,journals/mp/AttouchBS13,Bolte:2006:LIN:1328019.1328299}.
The following corollary states the iteration complexity of RSG for unconstrained problems that have the KL property at each $\bar\w\in\Omega_*$ .
  \begin{corollary}
  \label{thm:KL}
  Suppose Assumption~\ref{ass:rsg}  holds,  $f(\w)$ satisfies a (uniform) Kurdyka  - \L ojasiewicz property at any $\bar\w\in\Omega_*$  with the same desingularizing function $\varphi$ and constant   $\eta$, and
  \begin{equation}\label{eqn:smallS}
  \S_\epsilon\subset\cup_{\bar\w\in\Omega_*}\left[U_{\bar\w}\cap \{\w: f(\bar\w)< f(\w)< f(\bar\w)+\eta\}\right].
  \end{equation}
 RSG has an iteration complexity of  $O\left(\alpha^2G^2(\frac{\varphi(\epsilon)}{\epsilon})^2\lceil \log_\alpha(\frac{\epsilon_0}{\epsilon})\rceil\right)$  for obtaining an $2\epsilon$-optimal solution 
 provided
 $t=\alpha^2G^2(\varphi(\epsilon)/\epsilon)^2$. 
In addition, if $\varphi(s) = cs^{1-\beta}$ for some $c>0$ and $\beta\in[0,1)$, the iteration complexity of RSG is
  $O( \frac{\alpha^2 G^2c^2(1-\beta)^2}{\epsilon^{2\beta}}\lceil \log_\alpha(\frac{\epsilon_0}{\epsilon})\rceil)$ provided  $t = \frac{\alpha^2G^2c^2}{\epsilon^{2\beta}}$ and $K = \lceil \log_\alpha(\frac{\epsilon_0}{\epsilon})\rceil$.
\end{corollary}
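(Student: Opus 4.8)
The plan is to reduce the whole statement to Theorem~\ref{thm:GDr} by turning the local Kurdyka--\L ojasiewicz inequality into a lower bound on the first-order optimality residual $\rho_\epsilon$ on the $\epsilon$-level set $\V_\epsilon$. The first observation is that the corollary concerns \emph{unconstrained} problems, so $\Omega=\R^d$ and the normal cone $\mathcal{N}_{\Omega}(\w)=\{\mathbf 0\}$ at every point; hence $\|\partial f(\w)+\mathcal{N}_{\Omega}(\w)\|_2=\|\partial f(\w)\|_2$ and $\rho_\epsilon=\min_{\w\in\V_\epsilon}\|\partial f(\w)\|_2$, which is exactly the quantity appearing in the KL inequality~\eqref{eq:KLineq}.

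First I would fix an arbitrary $\w\in\V_\epsilon$, so $f(\w)=f_*+\epsilon$. By the covering hypothesis~\eqref{eqn:smallS} there is some $\bar\w\in\Omega_*$ with $\w\in U_{\bar\w}$ and $f(\bar\w)<f(\w)<f(\bar\w)+\eta$; since $f(\bar\w)=f_*$ for \emph{every} $\bar\w\in\Omega_*$, the base value is the same for all such points and $f(\w)-f(\bar\w)=\epsilon$. Because the uniform KL property gives the same $\varphi$ and $\eta$ at every point of $\Omega_*$, applying~\eqref{eq:KLineq} at $\w$ yields $\varphi'(\epsilon)\,\|\partial f(\w)\|_2\ge 1$, i.e. $\|\partial f(\w)\|_2\ge 1/\varphi'(\epsilon)$. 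Taking the minimum over $\w\in\V_\epsilon$ gives $\rho_\epsilon\ge 1/\varphi'(\epsilon)$. To rewrite this in the stated form I would use concavity of $\varphi$ together with $\varphi(0)=0$: since $\varphi'$ is nonincreasing, $\varphi(\epsilon)=\int_0^\epsilon\varphi'(s)\,ds\ge\epsilon\,\varphi'(\epsilon)$, so $\varphi'(\epsilon)\le\varphi(\epsilon)/\epsilon$ and therefore $\rho_\epsilon\ge\epsilon/\varphi(\epsilon)$, equivalently $1/\rho_\epsilon^2\le(\varphi(\epsilon)/\epsilon)^2$.

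The first complexity bound then follows at once from Theorem~\ref{thm:GDr}: any $t$ with $\alpha^2G^2(\varphi(\epsilon)/\epsilon)^2\le t=O(\alpha^2G^2(\varphi(\epsilon)/\epsilon)^2)$ satisfies $t\ge\alpha^2G^2/\rho_\epsilon^2$, so with $K=\lceil\log_\alpha(\epsilon_0/\epsilon)\rceil$ stages RSG returns a $2\epsilon$-optimal solution after a total of $O(t\lceil\log_\alpha(\epsilon_0/\epsilon)\rceil)$ iterations. For the power case $\varphi(s)=cs^{1-\beta}$ I would simply compute $\varphi'(\epsilon)=c(1-\beta)\epsilon^{-\beta}$, so $\rho_\epsilon\ge\epsilon^{\beta}/(c(1-\beta))$ and $1/\rho_\epsilon^2\le c^2(1-\beta)^2/\epsilon^{2\beta}$; taking $t=\Theta(\alpha^2G^2c^2(1-\beta)^2/\epsilon^{2\beta})$ (the looser choice $t=\alpha^2G^2c^2/\epsilon^{2\beta}$ also works, since $(1-\beta)^2\le1$) and $K$ as above delivers the claimed $O(\alpha^2G^2c^2(1-\beta)^2\epsilon^{-2\beta}\lceil\log_\alpha(\epsilon_0/\epsilon)\rceil)$ complexity.

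I do not expect a genuine obstacle: the argument is essentially a translation of the KL inequality into a bound on $\rho_\epsilon$ followed by invoking Theorem~\ref{thm:GDr} (or equivalently plugging $B_\epsilon\le\varphi(\epsilon)/\rho_\epsilon$-type estimates into Corollary~\ref{lem:3}). The two spots that need a little care are: (i) checking that the hypotheses of~\eqref{eq:KLineq} — namely $\w\in U_{\bar\w}$ and $f(\bar\w)<f(\w)<f(\bar\w)+\eta$ — actually hold for every $\w\in\V_\epsilon$; this is precisely the role of~\eqref{eqn:smallS} (applied to points of $\V_\epsilon\subset\S_\epsilon$), and the strict lower inequality is automatic because $f(\w)=f_*+\epsilon>f_*=f(\bar\w)$ on $\V_\epsilon$; and (ii) the concavity step $\varphi(\epsilon)\ge\epsilon\varphi'(\epsilon)$, which is what converts the ``slope'' bound $1/\varphi'(\epsilon)$ into the ``ratio'' bound $\epsilon/\varphi(\epsilon)$ used in the statement. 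Everything else is direct substitution into results already established in the paper.
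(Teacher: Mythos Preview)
Your argument is correct, but it takes a different route from the paper's. You work directly with $\rho_\epsilon$: for each $\w\in\V_\epsilon$ you apply the KL inequality~\eqref{eq:KLineq} to get $\|\partial f(\w)\|_2\ge 1/\varphi'(\epsilon)$, then use concavity of $\varphi$ (with $\varphi(0)=0$) to obtain $\rho_\epsilon\ge \epsilon/\varphi(\epsilon)$, and finally invoke Theorem~\ref{thm:GDr}. The paper instead appeals to an external result (Proposition~\ref{prop:KL}, from Bolte et al.) which converts the KL inequality into a distance bound $\|\w-\w^*\|_2\le\varphi(f(\w)-f_*)$; this yields $B_\epsilon\le\varphi(\epsilon)$ and the conclusion follows from Corollary~\ref{lem:3}.

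Your approach is more self-contained, since it avoids importing Proposition~\ref{prop:KL} and uses only the KL inequality as stated plus the concavity of $\varphi$. It also gives the slightly sharper intermediate bound $\rho_\epsilon\ge 1/\varphi'(\epsilon)$ before weakening to $\epsilon/\varphi(\epsilon)$. The paper's route, on the other hand, establishes the local error bound $\|\w-\w^*\|_2\le\varphi(f(\w)-f_*)$ as a byproduct, which ties the KL case explicitly into the framework of Section~\ref{sec:ler} (Definition~\ref{def:ler}) rather than treating it as a standalone argument. Both arrive at the same complexity, and both rely on the observation (which you correctly flag) that the corollary is stated for unconstrained problems so that $\mathcal N_\Omega\equiv\{0\}$ and $\rho_\epsilon$ reduces to $\min_{\V_\epsilon}\|\partial f(\w)\|_2$.
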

\begin{proof}
We can prove the above corollary following a result in~\citep{arxiv:1510.08234} as presented in Proposition~\ref{prop:KL} in the appendix. According to Proposition~\ref{prop:KL}, if $f(\cdot)$ satisfies the KL property at $\bar\w$, then for all $\w\in U_{\bar \w}\cap \{\w: f(\bar\w)< f(\w)< f(\bar \w)+ \eta\}$ it holds that $
\|\w - \w^*\|_2\leq \varphi(f(\w) - f(\bar \w))$.
It then, under the uniform condition in~(\ref{eqn:smallS}), implies that, for any $\w\in\S_\epsilon$
$$
 \|\w - \w^*\|_2 \leq \varphi(f(\w) - f_*)\leq \varphi(\epsilon),
$$
where we use the monotonic property of $\varphi$. Then the first conclusion follows similarly as Corollary~\ref{lem:3} by noting $B_\epsilon \leq  \varphi(\epsilon)$.
%
The second conclusion immediately follows by setting $\varphi(s) = cs^{1-\beta}$ in the first conclusion.  {Please note that the above inequality implies the local error bound condition with $\theta=1-\beta$ for $\varphi(s) = cs^{1-\beta}$.} 
\end{proof}

While the conclusion in Corollary~\ref{thm:KL} hinges on a condition in~(\ref{eqn:smallS}), in practice  many convex functions (e.g., continuous semi-algebraic or subanalytic functions) satisfy the KL property with $U =\R^d$ and any finite $\eta<\infty$~\citep{Attouch:2010:PAM:1836121.1836131,arxiv:1510.08234,DBLP:journals/siamjo/Li10}.

It is worth mentioning that to our best  knowledge, the present work is  the first to leverage the KL property for developing improved subgradient methods, though it has been explored in non-convex and convex optimization for deterministic  descent methods for smooth optimization~\citep{arxiv:1510.08234,Bolte:2014:PAL:2650160.2650169,Attouch:2010:PAM:1836121.1836131,DBLP:conf/pkdd/KarimiNS16}. {For example, \cite{arxiv:1510.08234} studied the convergence of {\bf subgradient descent sequence} for minimizing a convex function under an error bound condition.  A sequence $\{\x_k\}$ is called a subgradient descent sequence if there exist $a>0, b>0$ it satisfies two conditions,  namely  sufficient decrease condition $f(\x_k) + a \|\x_k - \x_{k-1}\|_2^2\leq f(\x_{k-1})$, and relative error condition, i.e.,  there exists $\omega_k\in\partial f(\x_k)$ such that  $
\|\omega_k\|_2 \leq b\|\x_k - \x_{k-1}\|_2$. 
However, for a general non-smooth  function $f(\x)$, the sequence generated by subgradient method, i.e., $\x_k = \x_{k-1} - \eta_k \partial\nabla f(\x_{k-1})$ do not necessarily satisfy the above two conditions. Instead, \cite{arxiv:1510.08234} considered proximal gradient method that only applies to a smaller family of functions consisting of a smooth component and a non-smooth component by assuming the proximal mapping for the non-smooth component can be efficiently computed. In contrast, our algorithm and analysis are developed for much general non-smooth functions.}

\section{Variants of RSG without knowing the constant $c$ and the exponent $\theta$ in the local error bound}\label{sec:prac}
In Section~\ref{sec:spec}, we have discussed the local error bound and presented several classes of problems to reveal the magnitude of $B_\epsilon$, i.e., $B_\epsilon = c\epsilon^\theta$. For some problems, the value of $\theta$ is exhibited. However, the value of the constant $c$ could be still difficult to estimate, which renders it challenging to set the appropriate value $t= \frac{\alpha^2c^2 G^2}{\epsilon^{2(1-\theta)}}$ for inner iterations of RSG. In practice, one might use a sufficiently large $c$ to set up the value of $t$. However, such an approach might be  vulnerable to both over-estimation and under-estimation of $t$. Over-estimating the value of $t$ leads to a waste of iterations while under-estimation leads to an less accurate solution that might not reach to the target accuracy level.  In addition, for some problems the value of $\theta$ is still an open problem. One interesting family of objective functions in machine learning is the sum of piecewise linear loss over training data and a nuclear norm regularizer or an overlapped or non-overlapped group lasso regularizer.  In this section, we present variants of RSG that can be implemented without knowing the value of $c$ in the local error bound condition and even the value of exponent  $\theta$, and prove their improved convergence over the SG method.

\subsection{RSG without knowing $c$}The key idea is to use an increasing sequence of $t$ and another level of restarting for RSG. The detailed steps are presented in Algorithm~\ref{alg:2}, to which we refer as R$^2$SG. With large enough $t_1$ in R$^2$SG, the complexity of R$^2$SG for finding an $\epsilon$ solution is given by the theorem below.

\begin{theorem}\label{thm:R2SG}
Suppose $\epsilon\leq \epsilon_0/4$ and $K =\lceil \log_\alpha(\epsilon_0/\epsilon)\rceil$. Let $t_1$ in Algorithm~\ref{alg:2} be large enough so that there exists $\hat\epsilon_1\in(\epsilon, \epsilon_0/2)$,  with which $f(\cdot)$ satisfies a local error bound condition on $\mathcal S_{\hat\epsilon_1}$ with $\theta \in (0,1)$ and the constant $\hat c$, and $t_1 = \frac{\alpha^2\hat c^2G^2}{\hat\epsilon_1^{2(1-\theta)}}$. Then, with at most $S=\lceil\log_2(\hat\epsilon_1/\epsilon)\rceil +1$ calls of RSG in Algorithm~\ref{alg:2}, we find a solution $\w^S$ such that $f(\w^S) - f_*\leq 2\epsilon$. The total number of iterations of R$^2$SG for obtaining $2\epsilon$-optimal solution is upper bounded by $T_S = O\left(\frac{\hat c^2G^2}{\epsilon^{2(1-\theta)}}\lceil \log_\alpha(\frac{\epsilon_0}{\epsilon})\rceil\right)$.
\end{theorem}
\begin{proof}
Since $K=\lceil\log_\alpha(\epsilon_0/\epsilon)\rceil\geq \lceil\log_\alpha(\epsilon_0/\hat\epsilon_1)\rceil$ and $t_1 = \frac{\alpha^2\hat c^2G^2}{\hat\epsilon_1^{2(1-\theta)}}$, we can apply Corollary~\ref{lem:leb_corr} with $\epsilon=\hat\epsilon_1$ to the first call of RSG in Algorithm~\ref{alg:2} so that the output $\w^1$ satisfies
\begin{equation}\label{eqn:RSG-s1}
f(\w^1) - f_* \leq 2\hat\epsilon_1.
\end{equation}
Then, we consider the second call of RSG with the initial solution $\w^1$ satisfying~(\ref{eqn:RSG-s1}). By the setup  $K = \lceil\log_\alpha(\epsilon_0/\epsilon)\rceil\geq \lceil\log_\alpha(2\hat\epsilon_1/(\hat\epsilon_1/2))\rceil$ and $t_2 =t_1 2^{2(1-\theta)}= \frac{\hat c^2G^2}{(\hat\epsilon_1/2)^{2(1-\theta)}}$, we can apply Corollary~\ref{lem:leb_corr} with $\epsilon=\hat\epsilon_1/2$ and $\epsilon_0=2\hat\epsilon_1$ so that the output $\w^2$ of the second call satisfies $
f(\w^2) - f_* \leq \hat\epsilon_1$.
By repeating this argument for all the subsequent calls of RSG,  with at most $S = \lceil \log_2(\hat\epsilon_1/\epsilon)\rceil + 1$ calls, Algorithm~\ref{alg:2} ensures that
\[
f(\w^S) - f_* \leq 2\hat\epsilon_1/2^{S-1} \leq 2\epsilon
\]
The total number of iterations during the $S$ calls of RSG is bounded by
\begin{equation*}
\begin{aligned}
T_S &= K\sum_{s=1}^St_s  =K \sum_{s=1}^St_12^{2(s-1)(1-\theta)} = Kt_1 2^{2(S-1)(1-\theta)}\sum_{s=1}^S \left(\frac{1}{2^{2(1-\theta)}}\right)^{S-s}\\
&\leq \frac{Kt_12^{2(S-1)(1-\theta)}}{1 - 1/2^{2(1-\theta)}} \leq O\left(Kt_1 \left(\frac{\hat\epsilon_1}{\epsilon}\right)^{2(1-\theta)}\right) = O\left(\frac{\hat c^2G^2}{\epsilon^{2(1-\theta)}}\lceil \log_\alpha(\frac{\epsilon_0}{\epsilon})\rceil\right).
\end{aligned}
\end{equation*}
\end{proof}

\begin{algorithm}[t]
\caption{RSG with restarting: R$^2$SG} \label{alg:2}
\begin{algorithmic}[1]
\STATE \textbf{Input}:  the  number of iterations $t_1$ in each stage of the first call of RSG and the number of stages $K$ in each call of RSG
\STATE \textbf{Initialization: } $\w^0 \in \Omega$;
\FOR{$s=1,2\ldots, S$}
\STATE Let $\w^s= \text{RSG}(\w^{s-1}, K, t_s, \alpha)$
\STATE Let $t_{s+1} = t_s 2^{2(1-\theta)}$
\ENDFOR
\end{algorithmic}
\end{algorithm}

\noindent\textbf{Remark:} We make several remarks about Algorithm~\ref{alg:2} and Theorem~\ref{thm:R2SG}: (i) Theorem~\ref{thm:R2SG} applies only when $\theta\in(0,1)$. If $\theta=1$, in order to have an increasing sequence of $t_s$, we can set $\theta$ in Algorithm~\ref{alg:2} to a little smaller value than $1$ in practical implementation, and the iteration complexity in Theorem~\ref{thm:R2SG} implies that R$^2$SG can enjoy a convergence rate close to linear convergence for problems satisfying the weak sharp minimum condition. (ii) the $\epsilon_0$ in the implementation of RSG (Algorthm~\ref{alg:1}) can be re-calibrated for $s\geq 2$ to improve the performance (e.g., one can use the relationship $f(\w_{s-1}) - f_* = f(\w_{s-2}) - f_* + f(\w_{s-1}) - f(\w_{s-2})$ to do re-calibration); (iii) as a tradeoff, the exiting criterion of R$^2$SG is not as automatic as RSG. In fact, the total number of calls $S$ of RSG for obtaining an $2\epsilon$-optimal solution depends on an unknown parameter (namely $\hat\epsilon_1$). In practice, one could use other stopping criteria to terminate the algorithm. For example, in machine learning applications one can monitor the performance on the validation data set to terminate the algorithm. (vi) The quantities $\hat\epsilon_1$, $S$  in the proof above are implicitly determined by $t_1$ and one does not need to compute $\hat\epsilon_1$ and $S$ in order to apply Algorithm~\ref{alg:2}. Finally, we note that when a local strong convexity condition holds on $\S_{\hat\epsilon_1}$ with $\hat\epsilon_1\geq \epsilon$ one might derive an iteration complexity of $O(1/\epsilon)$ for SG  by first showing that SG converges to $\S_{\hat\epsilon_1}$ with a number of iterations independent of $\epsilon$, then showing that  the iterates stay within  $\S_{\hat\epsilon_1}$ and converge to an $\epsilon$-level set with an iteration complexity of $O(1/\epsilon)$ following existing analysis of SG for strongly convex functions (e.g.,~\cite{DBLP:journals/corr/abs-1212-2002}). However, it still needs to know the value of the local strong convexity parameter unlike our result in Theorem~\ref{thm:R2SG} that does not need to known the local strong convexity parameter. 

\subsection{RSG for unknown $\theta$ and $c$}
Without knowing $\theta\in(0,1]$ and $c$ to get a sharper local error bound, we can simply  let $\theta=0$ and $c = B_{\epsilon'}$ with $\epsilon'\geq \epsilon$, which still render  the inequaity~(\ref{eqn:ler}) hold (c.f. Definition \ref{def:ler}).  Then we can employ the same trick to increase the values of $t$. In particular, we start with a sufficiently large value of $t$ and run RSG with $K=\lceil \log_\alpha(\epsilon_0/\epsilon)\rceil$ stages, and then increase the value of $t$  by a factor of $4$ and repeat the process. 

\begin{theorem}\label{thm:2RSG}
Let  $\theta=0$  in Algorithm~\ref{alg:2} and suppose $\epsilon\leq \epsilon_0/4$ and $K =\lceil \log_\alpha(\epsilon_0/\epsilon)\rceil$. Assume  $t_1$ in Algorithm~\ref{alg:2} is large enough so that there exists $\hat\epsilon_1\in(\epsilon, \epsilon_0/2]$ giving $t_1 = \frac{\alpha^2B_{\hat\epsilon_1}^2G^2}{\hat\epsilon_1^{2}}$.  Then, with at most $S=\lceil\log_2(\hat\epsilon_1/\epsilon)\rceil +1$ calls of RSG in Algorithm~\ref{alg:2}, we find a solution $\w^S$ such that $f(\w^S) - f_*\leq 2\epsilon$. The total number of iterations of R$^2$SG for obtaining $2\epsilon$-optimal solution is upper bounded by $T_S = O\left(\frac{B_{\hat\epsilon_1}^2G^2}{\epsilon^{2}}\lceil \log_\alpha(\frac{\epsilon_0}{\epsilon})\rceil\right)$.
\end{theorem}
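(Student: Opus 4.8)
The plan is to reduce Theorem~\ref{thm:2RSG} to the already-established Theorem~\ref{thm:R2SG} by observing that taking $\theta=0$ and $c=B_{\hat\epsilon_1}$ is a (degenerate) instance of the local error bound of Definition~\ref{def:ler}. First I would record why this instance is legitimate: for $\theta=0$, condition~\eqref{eqn:ler} on $\S_{\hat\epsilon_1}$ just asks $\|\w-\w^*\|_2\le c$ for all $\w\in\S_{\hat\epsilon_1}$, and since $\w\mapsto\min_{\u\in\Omega_*}\|\w-\u\|_2$ is convex while $f$ is coercive along every ray emanating from $\Omega_*$ (its sublevel sets being bounded by Corollary~8.7.1 of~\citep{rockafellar1970convex}, as already used in Section~\ref{sec:main}), one can slide any $\w\in\S_{\hat\epsilon_1}\setminus\Omega_*$ outward along the segment from its projection onto $\Omega_*$ through $\w$ until it reaches $\V_{\hat\epsilon_1}$ without decreasing its distance to $\Omega_*$; hence $\max_{\w\in\S_{\hat\epsilon_1}}\mathrm{dist}(\w,\Omega_*)=B_{\hat\epsilon_1}$, i.e.\ \eqref{eqn:ler} holds on $\S_{\hat\epsilon_1}$ with $\theta=0$ and $c=B_{\hat\epsilon_1}$. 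Since $\S_\epsilon\subseteq\S_{\hat\epsilon_1}$ and $\epsilon\mapsto B_\epsilon$ is non-decreasing for $\epsilon\le\hat\epsilon_1$, the same pair $(\theta,c)=(0,B_{\hat\epsilon_1})$ remains a valid local error bound on every $\S_\epsilon$ with $\epsilon\le\hat\epsilon_1$ --- in particular on all the sublevel sets visited during the $S$ calls of RSG.

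With this in hand, the argument of Theorem~\ref{thm:R2SG} transfers verbatim. Set $\epsilon^{(s)}:=\hat\epsilon_1/2^{s-1}$; with $\theta=0$ the update $t_{s+1}=t_s2^{2(1-\theta)}$ of Algorithm~\ref{alg:2} becomes $t_{s+1}=4t_s$, so $t_s=t_14^{s-1}=\frac{\alpha^2B_{\hat\epsilon_1}^2G^2}{(\epsilon^{(s)})^2}$, which is precisely the value of $t$ that Corollary~\ref{lem:leb_corr} requires for target accuracy $\epsilon^{(s)}$ under the error bound $(\theta,c)=(0,B_{\hat\epsilon_1})$. Moreover, since $\epsilon\le\epsilon_0/4$, $K=\lceil\log_\alpha(\epsilon_0/\epsilon)\rceil$ is at least $\lceil\log_\alpha4\rceil$, the number of stages Corollary~\ref{lem:leb_corr} needs at each call (the incoming-gap-to-target ratio being $4$ throughout). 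Then the first call of RSG, to which Corollary~\ref{lem:leb_corr} applies with $\epsilon=\hat\epsilon_1$, returns $\w^1$ with $f(\w^1)-f_*\le2\hat\epsilon_1$; and inductively, if $f(\w^{s-1})-f_*\le\hat\epsilon_1 2^{3-s}$, applying Corollary~\ref{lem:leb_corr} to the $s$-th call (with its $\epsilon_0$ re-set to that bound, still a legitimate initial-gap estimate) yields $f(\w^s)-f_*\le2\epsilon^{(s)}=\hat\epsilon_1 2^{2-s}$. Choosing $S=\lceil\log_2(\hat\epsilon_1/\epsilon)\rceil+1$ thus forces $f(\w^S)-f_*\le2\epsilon$, and summing the per-call work gives $T_S=K\sum_{s=1}^St_s=Kt_1\frac{4^S-1}{3}=O(Kt_14^S)=O\!\big(Kt_1(\hat\epsilon_1/\epsilon)^2\big)=O\!\big(\tfrac{\alpha^2B_{\hat\epsilon_1}^2G^2}{\epsilon^2}\lceil\log_\alpha(\epsilon_0/\epsilon)\rceil\big)$, absorbing $\alpha$ into the constant as elsewhere in the paper.

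The only genuinely new ingredient --- everything after it being a line-by-line transcription of the proof of Theorem~\ref{thm:R2SG} with $\theta=0$ substituted --- is the first paragraph, and I expect that to be the main (if modest) obstacle: one must argue that $B_{\hat\epsilon_1}$ is an admissible constant for the degenerate $\theta=0$ error bound on every sublevel set the algorithm touches. This rests on (i) $\max_{\w\in\S_\epsilon}\mathrm{dist}(\w,\Omega_*)=B_\epsilon$ rather than merely $\ge B_\epsilon$, and (ii) monotonicity of $\epsilon\mapsto B_\epsilon$; both follow from convexity of $\mathrm{dist}(\cdot,\Omega_*)$ together with boundedness of the sublevel sets (and are most transparent when $\Omega=\R^d$, the setting of the machine-learning applications this section targets). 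Granting (i)--(ii), the remainder of the proof is routine.
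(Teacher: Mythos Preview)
Your proposal is correct and follows essentially the same approach as the paper's own proof, which also declares $c=B_{\hat\epsilon_1}$, $\theta=0$ and then reruns the induction of Theorem~\ref{thm:R2SG} verbatim, applying Corollary~\ref{lem:leb_corr} at each call and summing the resulting geometric series. Your first paragraph in fact supplies more justification for the validity of the degenerate error bound than the paper does --- the paper simply asserts it in the remark preceding the theorem.
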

{\bf Remark:} Since $B_\epsilon/\epsilon$ is a monotonically decreasing function in $\epsilon$~\citep[Lemma 7]{DBLP:journals/corr/abs-1607-01027}, such a $t_1$ in Theorem~\ref{thm:2RSG} exists. {Note that if the problem satisfies a KL property as in Corollary~\ref{thm:KL} and the value of $\beta$ is unknown, the above theorem still holds.}

\begin{proof}
The proof is similar to that of Theorem~\ref{thm:R2SG} except that we let $c = B_{\hat\epsilon_1}$ and $\theta=0$. Since $K=\lceil\log_\alpha(\epsilon_0/\epsilon)\rceil\geq \lceil\log_\alpha(\epsilon_0/\hat\epsilon_1)\rceil$ and $t_1 = \frac{\alpha^2B_{\hat\epsilon_1}^2G^2}{\hat\epsilon_1^{2}}$, we can apply Corollary~\ref{lem:3} with $\epsilon=\hat\epsilon_1$  to the first call of RSG in Algorithm~\ref{alg:2} so that the output $\w^1$ satisfies
\begin{equation}\label{eqn:RSG-s2}
f(\w^1) - f_* \leq 2\hat\epsilon_1.
\end{equation}
Then, we consider the second call of RSG with the initial solution $\w^1$ satisfying~(\ref{eqn:RSG-s2}). By the setup  $K = \lceil\log_\alpha(\epsilon_0/\epsilon)\rceil\geq \lceil\log_\alpha(2\hat\epsilon_1/(\hat\epsilon_1/2))\rceil$ and $t_2 =t_1 2^{2}= \frac{B_{\hat\epsilon_1}^2G^2}{(\hat\epsilon_1/2)^{2}}$, we can apply Corollary~\ref{lem:3} with $\epsilon=\hat\epsilon_1/2$ and $\epsilon_0=2\hat\epsilon_1$ (noting that $B_{\hat\epsilon_1}>B_{\hat\epsilon_1/2}$) so that the output $\w^2$ of the second call satisfies $
f(\w^2) - f_* \leq \hat\epsilon_1$.
By repeating this argument for all the subsequent calls of RSG,  with at most $S = \lceil \log_2(\hat\epsilon_1/\epsilon)\rceil + 1$ calls, Algorithm~\ref{alg:2} ensures that
\[
f(\w^S) - f_* \leq 2\hat\epsilon_1/2^{S-1} \leq 2\epsilon.
\]
The total number of iterations during the $S$ calls of RSG is bounded by
\begin{equation*}
\begin{aligned}
T_S &= K\sum_{s=1}^St_s  =K \sum_{s=1}^St_12^{2(s-1)} = Kt_1 2^{2(S-1)}\sum_{s=1}^S \left(\frac{1}{2^{2}}\right)^{S-s}\\
&\leq \frac{Kt_12^{2(S-1)}}{1 - 1/2^{2}} \leq O\left(Kt_1 \left(\frac{\hat\epsilon_1}{\epsilon}\right)^{2}\right) = O\left(\frac{B_{\hat\epsilon_1}^2G^2}{\epsilon^{2}}\lceil \log_\alpha(\frac{\epsilon_0}{\epsilon})\rceil\right).
\end{aligned}
\end{equation*}
\end{proof}

\section{Discussions and Comparisons}\label{sec:diss}
In this section, we further discuss the obtained results and compare them with existing results. 

\paragraph{Comparison with the standard SG} The standard SG's iteration complexity is known as $O(\frac{G^2\|\w_0 - \w^*_0\|_2^2}{\epsilon^2})$ for achieving an $2\epsilon$-optimal solution. By assuming $t$ is appropriately set  in RSG according to Corollary~\ref{lem:3},  its iteration complexity is $O(\frac{G^2B_{\epsilon}^2}{\epsilon^2}\log(\epsilon_0/\epsilon))$, which depends on $B_\epsilon^2$ instead of $\|\w_0 - \w_0^*\|_2^2$ and only has a logarithmic dependence on $\epsilon_0$, the upper bound of $f(\w_0) - f_*$. When the initial solution is far from the optimal set so that $B_\epsilon^2\ll\|\w_0 - \w_0^*\|_2^2$,  RSG could have a lower worst-case complexity. Even if $t$ is not appropriately set up to be larger than $\alpha^2G^2B_\epsilon^2/\epsilon^2$, Theorem~\ref{thm:2RSG} guarantees that the proposed R$^2$SG could still has a lower iteration complexity than that of SG as long as $t_1$ is sufficiently large. In some special cases, e.g., when $f$ satisfies the local error bound condition~(\ref{eqn:ler}) with $\theta\in(0,1]$, RSG only needs $O\left(\frac{1}{\epsilon^{2(1-\theta)}}\log\left(\frac{1}{\epsilon}\right)\right)$  iterations (see Corollary~\ref{lem:leb_corr} and Theorem~\ref{thm:R2SG}), which has a better dependency on $\epsilon$ than the complexity of standard SG method.

\paragraph{Comparison with the SG method in~\citep{2015arXivRobert}} \cite{2015arXivRobert} introduced a similar but different growth condition:
\begin{eqnarray}
\label{eqn:FreundCond}
\|\w-\w^*\|_2\leq \mathcal{G}\cdot(f(\w)-f_{slb}), \quad \forall \w\in\Omega,
\end{eqnarray}
where $f_{slb}$ is a strict lower bound of $f_*$, by \citet{2015arXivRobert}. The main differences from our key condition~(\ref{eqn:keyii}) are: the left-hand side is the distance of $\w$ to the optimal set in \eqref{eqn:FreundCond} while it is the distance of $\w$ to the $\epsilon$-sublevel set in \eqref{eqn:keyii}; the right-hand side is the objective gap with respect to $f_{slb}$ in \eqref{eqn:FreundCond} and it is the objective gap with respect to $f_*$ in \eqref{eqn:keyii}; the growth constant $\mathcal{G}$ in \eqref{eqn:FreundCond} varies with $f_{slb}$ and $\rho_\epsilon$ in \eqref{eqn:keyii} may depend on $\epsilon$ in general.

Freund and Lu's SG method has an iteration complexity of $O(G^2\mathcal{G}^2(\frac{\log H}{\epsilon'} + \frac{1}{\epsilon'^2}))$ for finding a solution $\hat{\w}$ such that $f(\hat{\w}) - f_*\leq \epsilon'(f_* - f_{slb})$, where $f_{slb}$ and $\mathcal{G}$ are defined in \eqref{eqn:FreundCond} and $H = \frac{f(\w_0)-f_{slb}}{f_*-f_{slb}}$. In comparison, our RSG can be better if $f_*-f_{slb}$ is large. To see this, we represent the complexity in~\citep{2015arXivRobert} in terms of the absolute error $\epsilon$ with $\epsilon=\epsilon'(f_* - f_{slb})$ and obtain $O(G^2\mathcal{G}^2(\frac{(f_*-f_{slb})\log H}{\epsilon} + \frac{(f_*-f_{slb})^2}{\epsilon^2}))$. If
the gap $f_* - f_{slb}$ is large, e.g., $O(f(\w_0)-f_{slb})$, the second term is dominating, which is at least $\Omega(\frac{G^2\|\w_0 - \w^*_0\|_2^2}{\epsilon^2})$ due to the definition of $\mathcal{G}$ in $\eqref{eqn:FreundCond}$. This complexity has the same order of magnitude as the standard SG method so that RSG can be better due to  the reasoning in last paragraph. {More generally, the iteration complexity of Freund and Lu's SG method can be reduced to $O(\frac{G^2B^2_{f_* - f{slb}}}{\epsilon^2})$ by choosing the best $\mathcal G$ in the proof of Theorem 1.1 in~\citep{2015arXivRobert}, which depends on $f_* - f_{slb}$. In comparison, RSG could have a lower complexity if $f_*-f_{slb}$ is larger than $\epsilon$ as in Corollary~\ref{lem:3} or $\hat\epsilon_1$ as in Theorem~\ref{thm:R2SG}. Our experiments in subsection~\ref{subsec:compFL} also corroborate this point.  In addition, RSG can leverage the local error bound condition to enjoy a lower iteration complexity than $O(1/\epsilon^2)$.}  

\paragraph{Comparison with \citep{Nesterov:2014:uniform_convex} for uniformly convex function} \cite{Nesterov:2014:uniform_convex} considered primal-dual subgradient methods for solving the problem  \eqref{eqn:prob0} with $f$ being \emph{uniformly convex}, namely,
$$
f(\alpha\w+ (1-\alpha)\v)\leq \alpha f(\w)+(1-\alpha)f(\v)-\frac{1}{2}\mu\alpha(1-\alpha)[\alpha^{\rho-1}+(1-\alpha)^{\rho-1}]\|\w-\v\|_2^\rho
$$
for any $\w$ and $\v$ in $\Omega$ and any $\alpha\in[0,1]$\footnote{The Euclidean norm in the definition here can be replaced by a general norm as in~\citep{Nesterov:2014:uniform_convex}.}, where $\rho\in[2,+\infty]$ and $\mu\geq0$. In this case, the method by~\citep{Nesterov:2014:uniform_convex} has an iteration complexity of $O\left(\frac{G^2}{\mu^{2/\rho}\epsilon^{2(\rho-1)/\rho}}\right)$. The uniform convexity of $f$ further implies $f(\w)-f_*\geq\frac{1}{2}\mu\|\w-\w^*\|_2^\rho$ for any $\w\in\Omega$ so that $f(\cdot)$ admits a local error bound on the $\epsilon$-sublevel set $\S_\epsilon$ with $c=\left(\frac{2}{\mu}\right)^{\frac{1}{\rho}}$ and $\theta=\frac{1}{\rho}$. Therefore, our RSG has a complexity of $O\left(\frac{G^2}{\mu^{2/\rho}\epsilon^{2(\rho-1)/\rho}}\log(\frac{\epsilon_0}{\epsilon})\right)$ according to Corollary~\ref{lem:leb_corr}. Compared to~\citep{Nesterov:2014:uniform_convex}, our complexity is higher by a logarithmic factor. However, we only require the local error bound property of $f$ that is weaker than uniform convexity and also covers much broader family of functions.  {Note that the above comparison is fair, since for achieving a target $\epsilon$-optimal solution the algorithms presented in \citep{Nesterov:2014:uniform_convex} do need the knowledge of  uniform convexity parameter $\rho$ and the parameter $\mu$.  It is worth mentioning that \cite{Nesterov:2014:uniform_convex} also presented algorithms with a fixed number of iterations $T$ as input  that achieve adaptive rates without knowledge of $\rho$ and $\mu$.  However, they only considered the case when $\rho>=2$, which correponds to $\theta\leq 1/2$ in our notations, while our methods can be applied also when $\theta> 1/2$.}

\section{Experiments}\label{sec:exp}
In this section, we present some  experiments to demonstrate the effectiveness of RSG. We first consider several applications in machine learning, in particular regression, classification and matrix completion, and focus on the comparison between RSG and SG. Then we make comparison between  RSG with  Freund \& Lu's SG variant for solving regression problems. In experiments, all compared algorithms use the same initial solution unless otherwise specified.

\begin{figure}[t]
\centering
\subfigure[different $t$]{\includegraphics[scale=0.28]{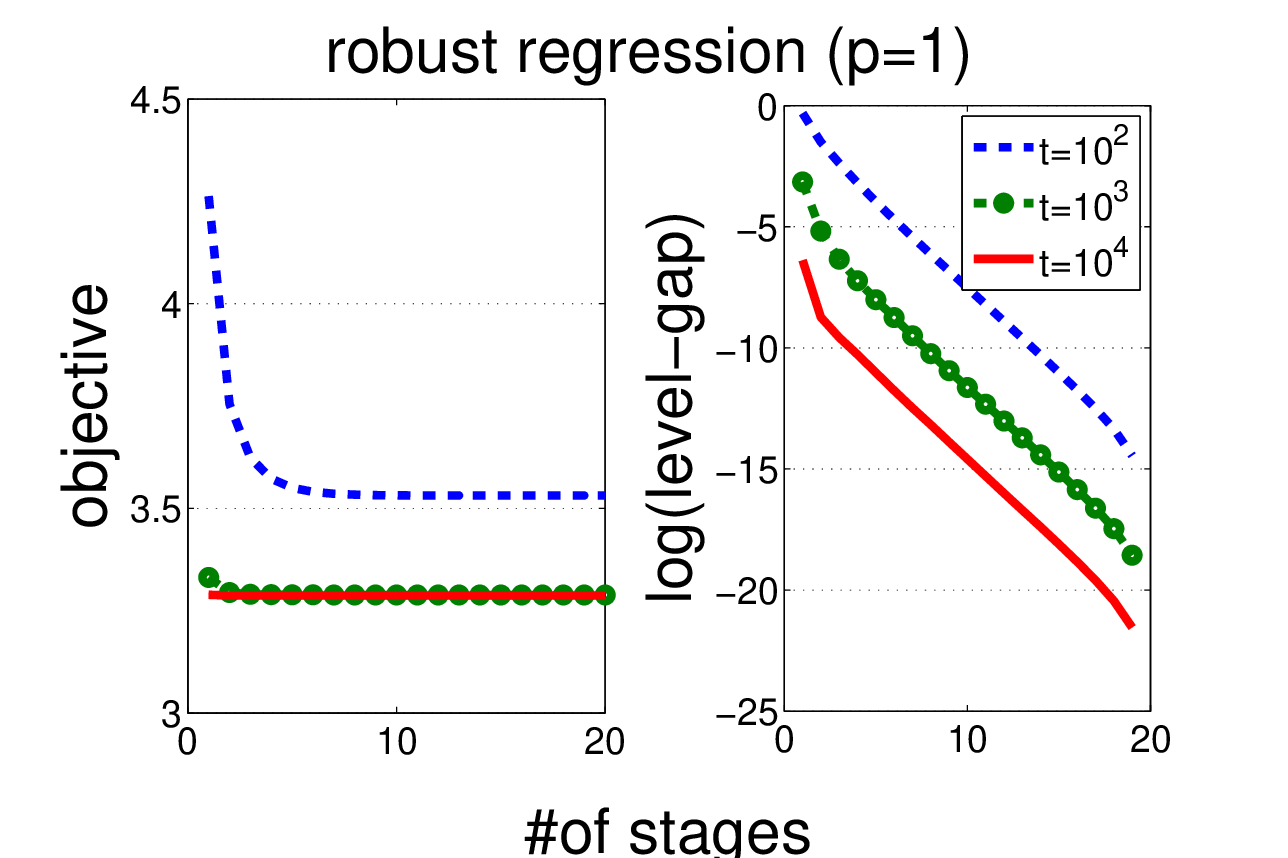}}\hspace*{0.1in}
\subfigure[different $t$]{\includegraphics[scale=0.28]{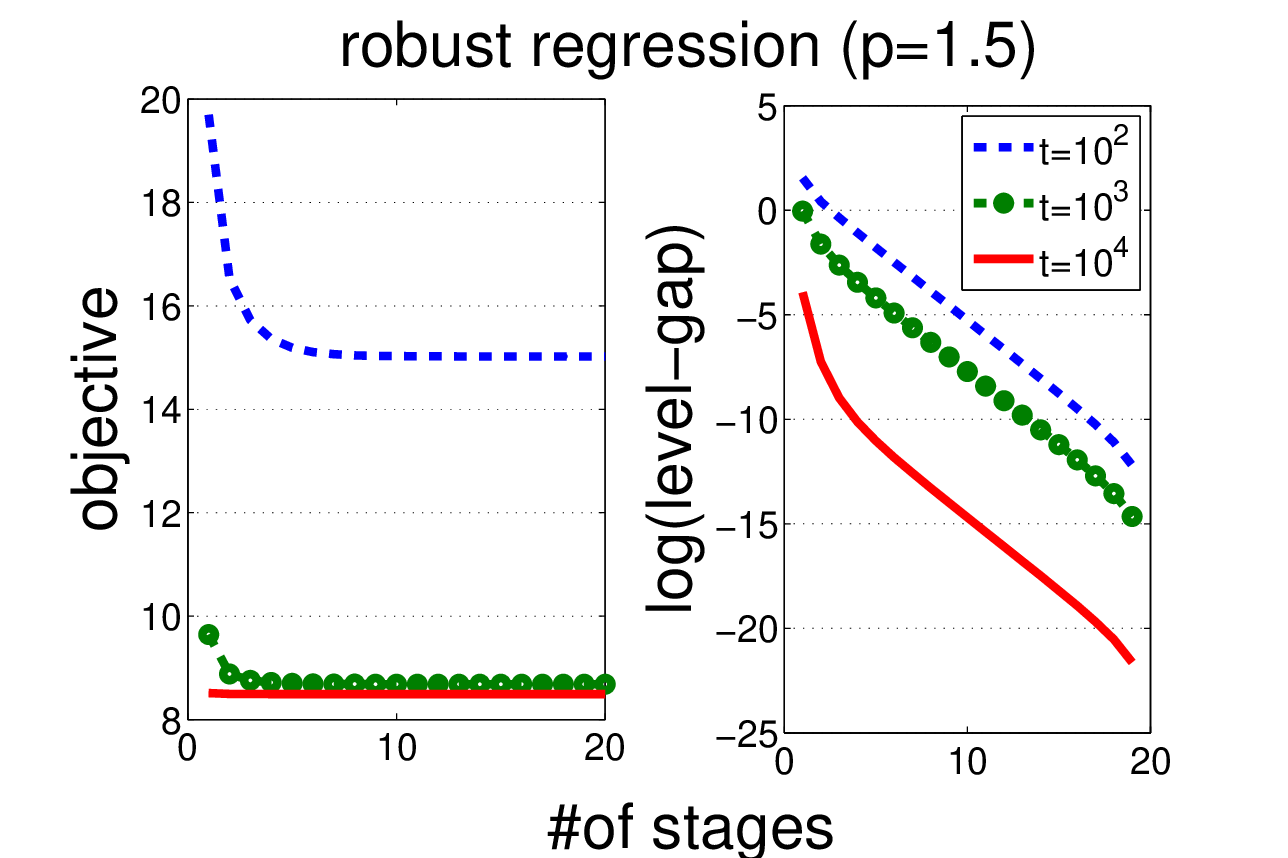}}\hspace*{0.1in}

\subfigure[different algorithms]{\includegraphics[scale=0.3]{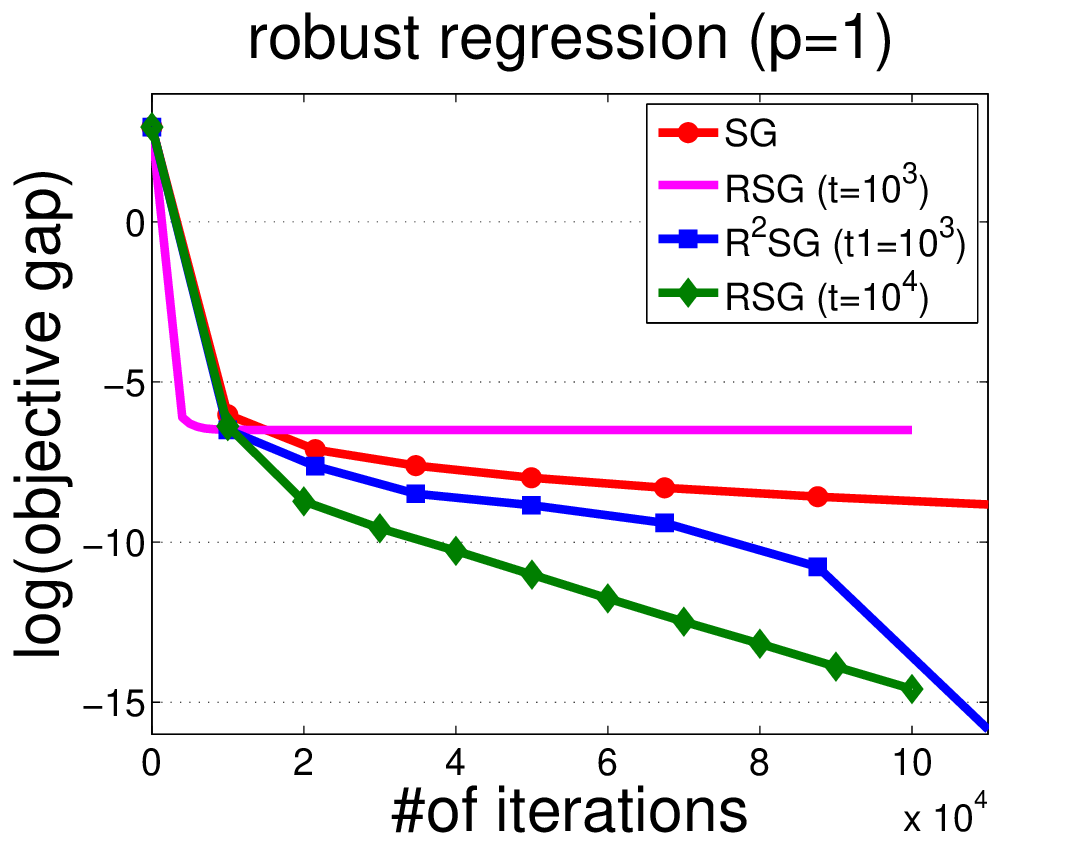}}\hspace*{0.1in}
\subfigure[different algorithms]{\includegraphics[scale=0.3]{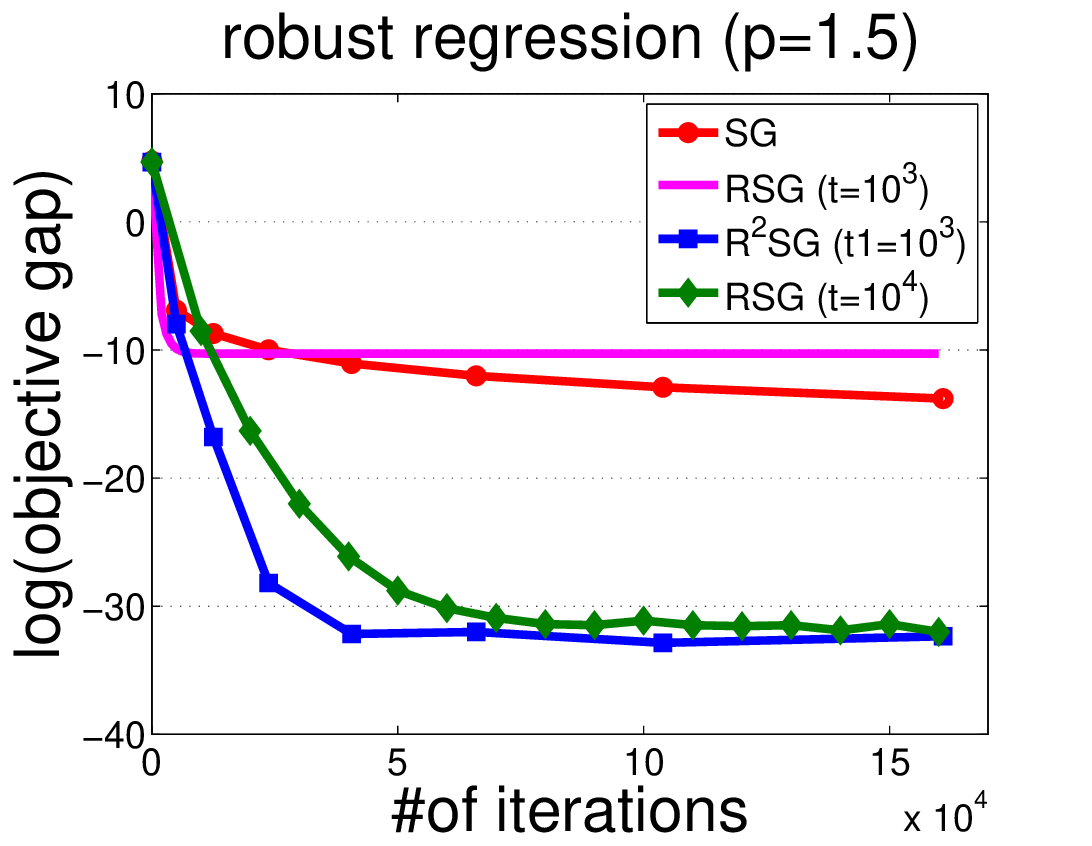}}\hspace*{0.1in}
\caption{Comparison of RSG with different $t$ and of different algorithms on the housing data. One iteration means one subgradient update in all algorithms. (t1 for R$^2$SG represents the initial value of $t$ in the first call of RSG.)  }\label{fig:4}
\vspace*{-0.22in}
\end{figure}

\subsection{Robust Regression}\label{subsec:RR} The regression problem is to predict an output $y$ based on a feature vector $\x\in\R^d$. Given a set of training examples $(\x_i,y_i), i=1,\ldots, n$, a linear  regression  model can be found  by solving the  optimization problem in~(\ref{eqn:rr}).

We solve two instances of the  problem with $p=1$ and $p=1.5$ and $\lambda=0$. We conduct experiments on two data sets from libsvm website~\footnote{\url{https://www.csie.ntu.edu.tw/~cjlin/libsvmtools/datasets/}}, namely  housing ($n=506$ and $d=13$) and  space-ga ($n=3107$ and $d=6$).    We first examine the convergence behavior of  RSG with different values for the number of iterations per-stage  $t=10^2$, $10^3$, and $10^4$. The value of $\alpha$ is set to $2$ in all experiments. The initial step size of RSG is set to be proportional to $\epsilon_0/2$ with the same scaling parameter for different variants.  We plot the results on housing data in Figure~\ref{fig:4} (a,b) and on space-ga data  in Figure~\ref{fig:3} (a,b). In each figure, we plot the objective value vs number of stages and the log difference between the objective value and the converged value (to which we refer as level gap). We can clearly see that with different values of  $t$ RSG converges to an $\epsilon$-level set and the convergence rate is linear in terms of the number of stages, which is consistent with our theory.

\begin{figure}[t]
\centering
\subfigure[different $t$]{\includegraphics[scale=0.28]{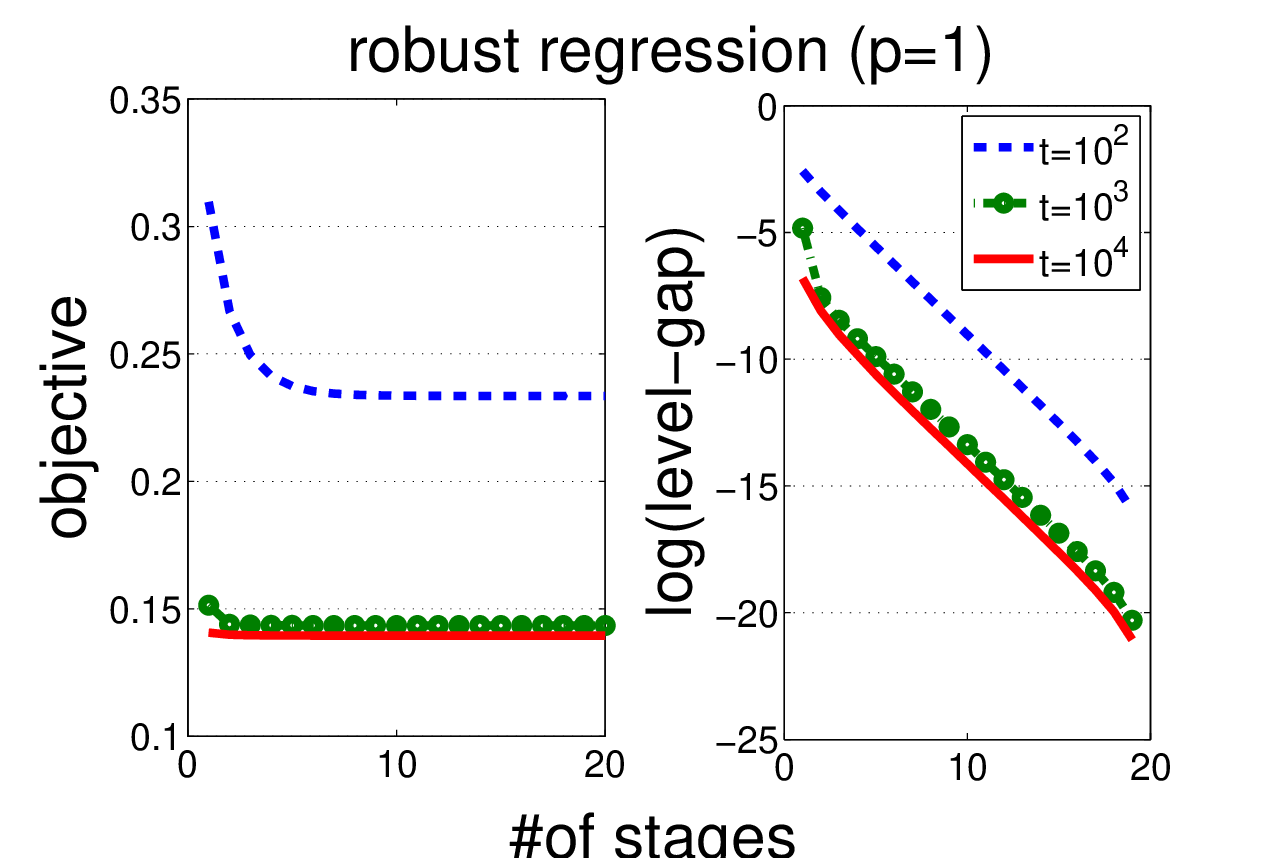}}\hspace*{0.1in}
\subfigure[different $t$]{\includegraphics[scale=0.28]{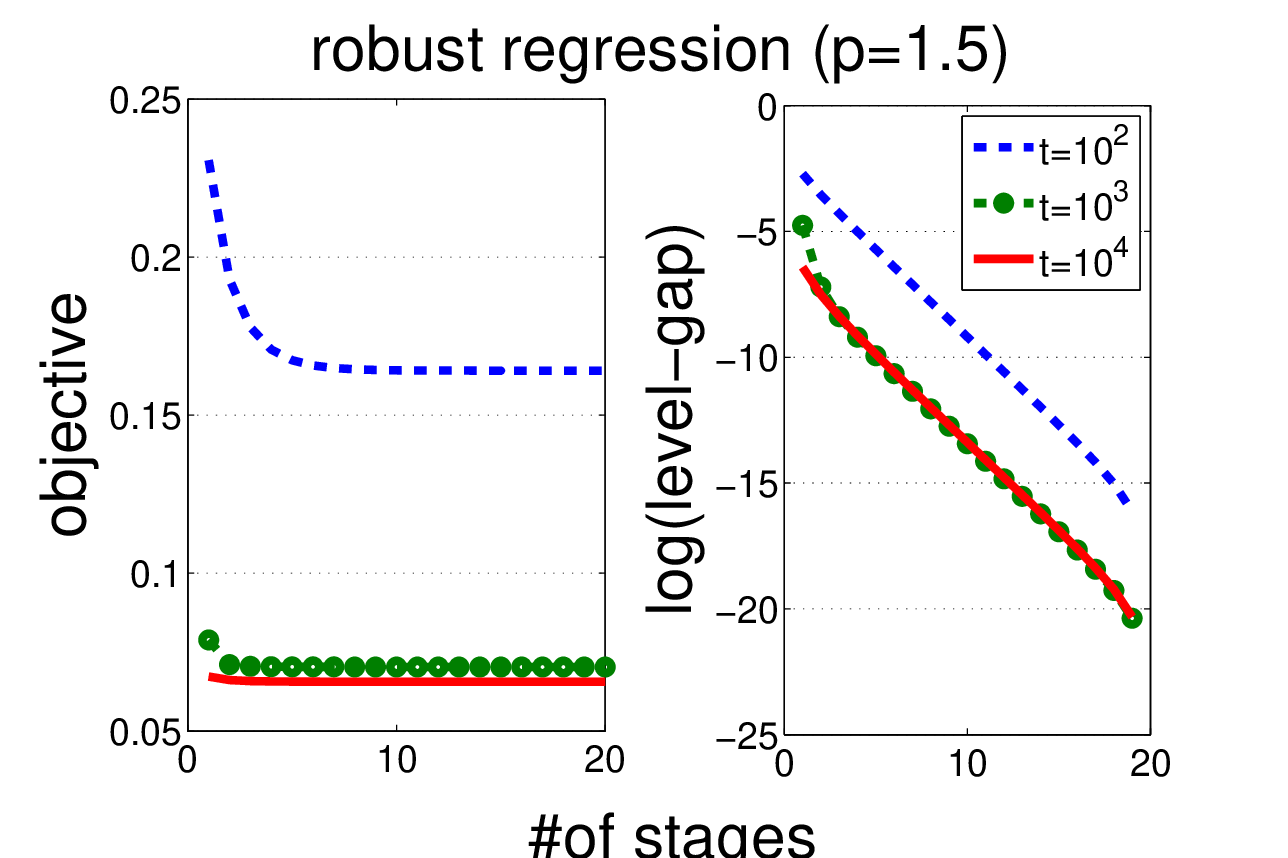}}\hspace*{0.1in}

\subfigure[different algorithms]{\includegraphics[scale=0.3]{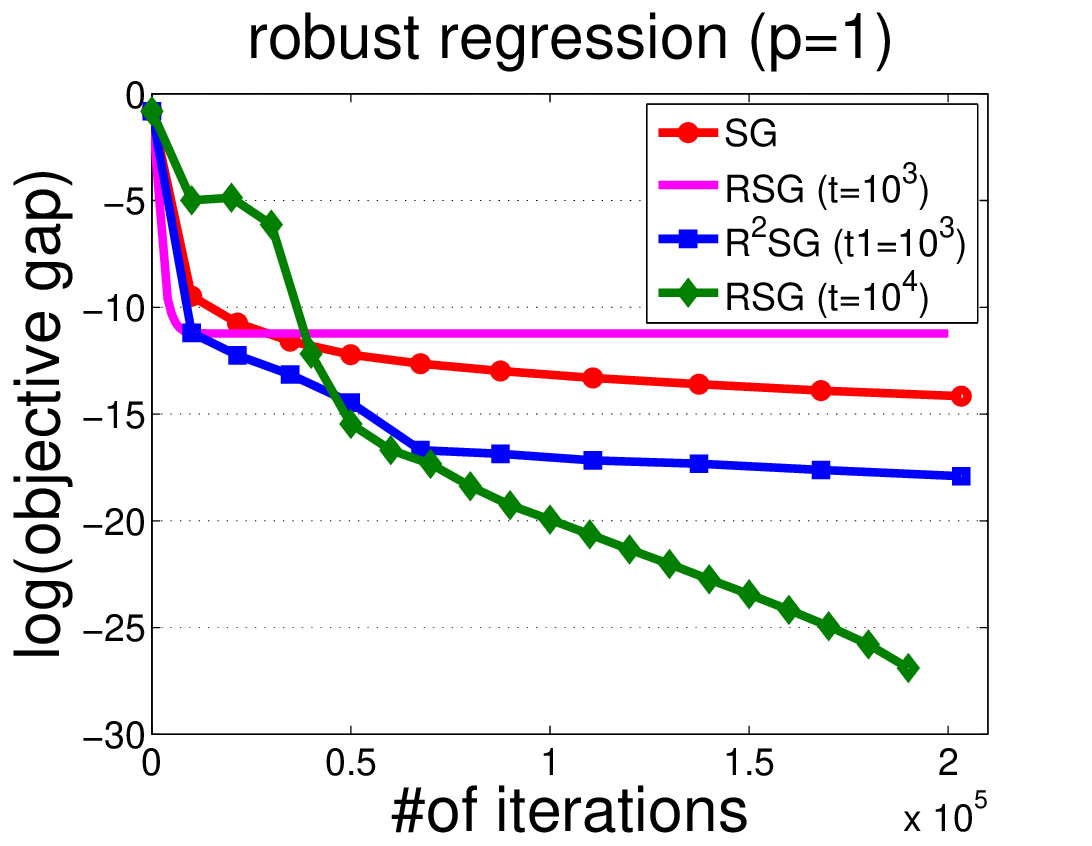}}\hspace*{0.1in}
\subfigure[different algorithms]{\includegraphics[scale=0.3]{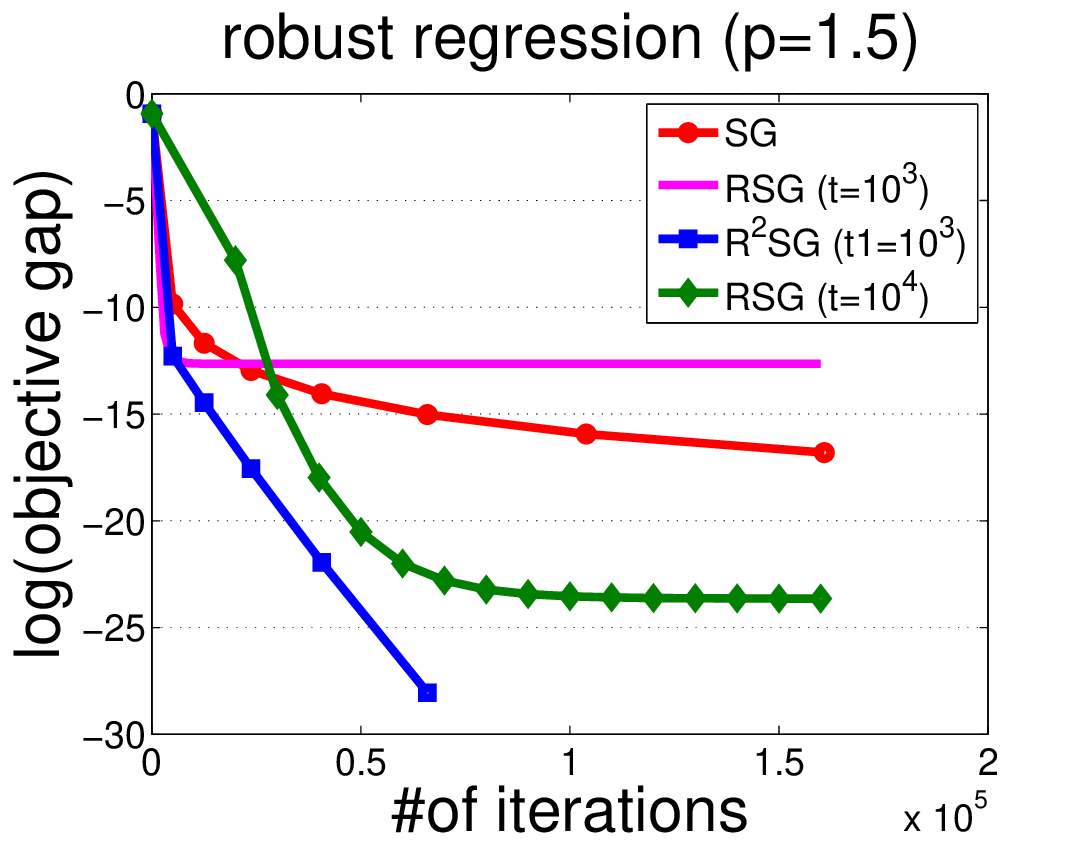}}\hspace*{0.1in}
\caption{Comparison of RSG with different $t$ and  of different algorithms on the space-ga data. One iteration means one subgradient update in all algorithms. }\label{fig:3}
\end{figure}

Secondly, we compare with SG to verify the effectiveness of RSG. The baseline SG is implemented with a decreasing step size proportional to $1/\sqrt{\tau}$, where $\tau$ is the iteration index. The initial step size of SG  is tuned in a wide range to  give the fastest convergence. The initial step size of RSG is  also tuned  around the best initial step size of SG. The results are shown in Figure~\ref{fig:4}(c,d) and Figure~\ref{fig:3}(c,d), where we show  RSG with two different values of $t$ and also R$^2$SG with an increasing sequence of $t$. In implementing  R$^2$SG, we restart RSG for every $5$ stages, and increase the number of iterations by a certain factor. In particular,we increase $t$ by a factor of $1.15$ and  $1.5$ respectively  for $p=1$ and $p=1.5$. From the results, we can see that (i) RSG with a smaller value of $t=10^3$ can quickly converge to an $\epsilon$-level, which is less accurate than SG after running a sufficiently large number of iterations; (ii) RSG with a relatively large value $t=10^4$ can converge to a much more accurate solution; (iv) R$^2$SG  converges much faster than SG and can bridge the gap between RSG-$t=10^3$ and RSG-$t=10^4$.

\begin{figure}[t]
\centering
\subfigure[comparison between different algorithms]{\includegraphics[scale=0.3]{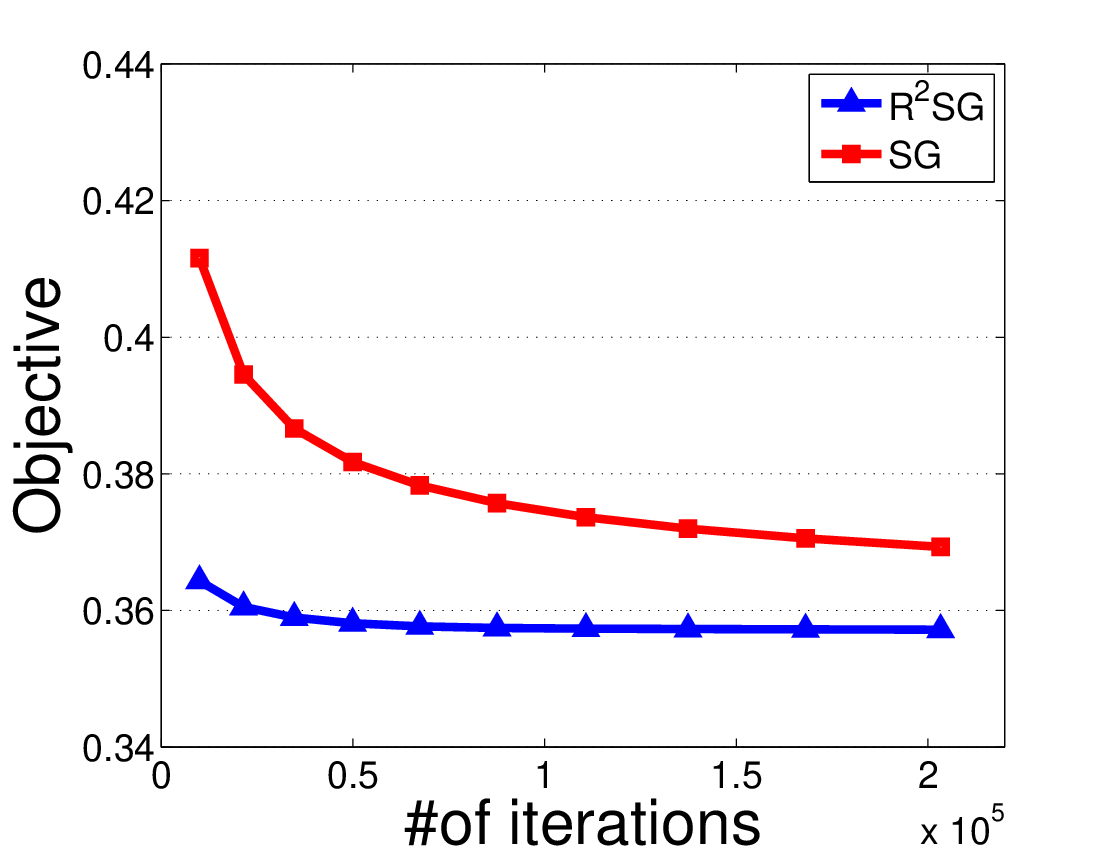}}\hspace*{0.2in}
\subfigure[sensitivity to initial solutions]{\includegraphics[scale=0.3]{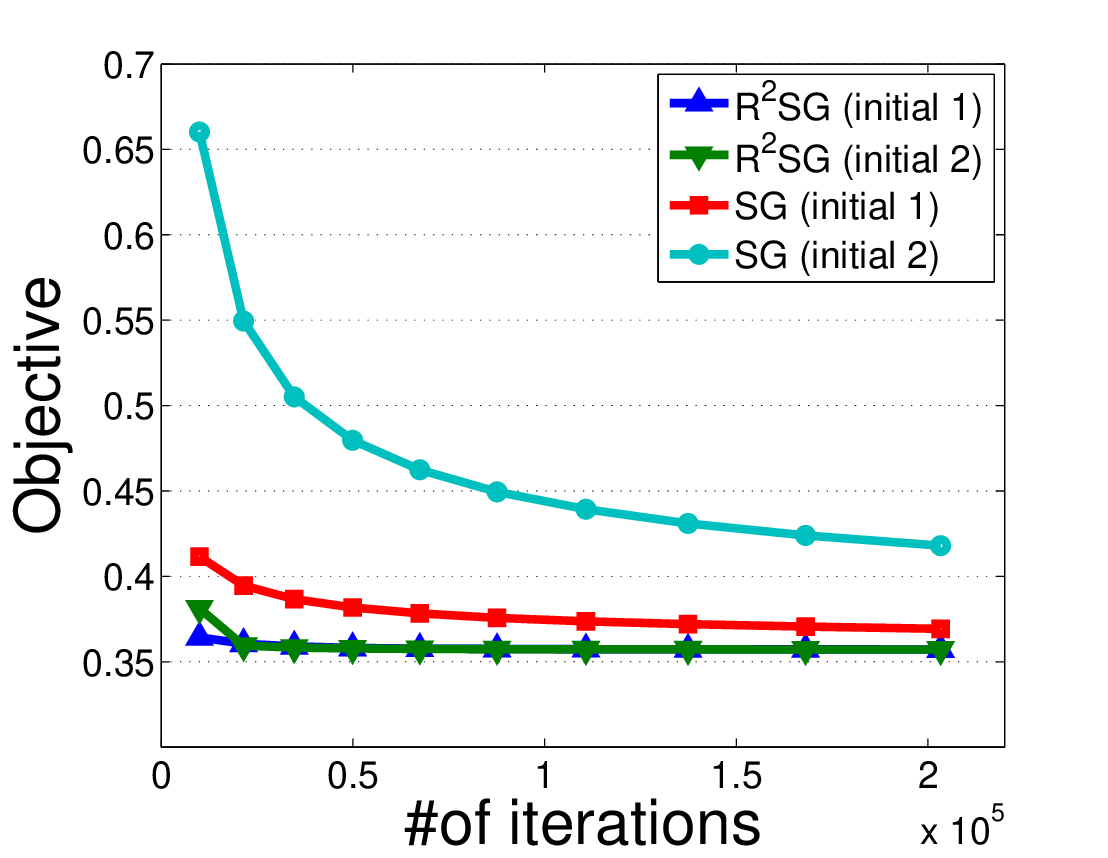}}
\caption{Results for solving SVM classification with GFlassso regularizer. In (b), the objective values of the two initial solutions are $1$ and $70.35$. One iteration means one subgradient update in all algorithms.}\label{fig:5}
\vspace*{-0.2in}
\end{figure}

\subsection{SVM Classification with a graph-guided fused lasso} The classification problem is to predict a binary class label $y\in\{1,-1\}$  based on a feature vector $\x\in\R^d$. Given a set of training examples $(\x_i,y_i), i=1,\ldots, n$, the problem of training a linear classification model can be cast into
\[
\min_{\w\in\R^d}F(\w):=\frac{1}{n}\sum_{i=1}^n\ell(\w^{\top}\x_i, y_i) + R(\w).
\]
Here we consider  the hinge loss $\ell(z, y) = \max(0, 1- yz)$ as in support vector machine (SVM) and a graph-guided fused lasso (GFlasso)  regularizer $R(\w) = \lambda \|F\w\|_1$~\citep{DBLP:journals/bioinformatics/KimSX09}, where $F=[F_{ij}]_{m\times d}\in\R^{m\times d}$ encodes the edge information between variables. Suppose there is a graph $\mathcal G=\{\mathcal V, \mathcal E\}$ where nodes $\mathcal V$ are the attributes and each edge is assigned a weight $s_{ij}$ that represents some kind of similarity between attribute $i$ and attribute $j$. Let $\mathcal E=\{e_1,\ldots, e_m\}$ denote a set of $m$ edges, where an edge $e_\tau=(i_\tau, j_\tau)$ consists of a tuple of two attributes. Then the $\tau$-th row of $F$ matrix can be formed by setting $F_{\tau, i_\tau}=s_{i_\tau, j_\tau}$ and $F_{\tau, j_\tau} = - s_{i_\tau, j_\tau}$ for $(i_\tau, j_\tau)\in\mathcal E$, and zeros for other entries. Then the GFlasso becomes $R(\w) = \lambda \sum_{(i,j)\in\mathcal E}s_{ij}|w_i - w_j|$. Previous studies have found that a carefully designed GFlasso regularization helps in reducing the risk of over-fitting. In this experiment, we follow~\citep{DBLP:conf/icml/OuyangHTG13} to generate a dependency graph by sparse inverse covariance selection~\citep{citeulike:2134265}.  To this end, we first generate a sparse  inverse covariance matrix using the method in~\citep{citeulike:2134265} and then assign an equal weight $s_{ij}=1$ to all edges that have non-zero entries in the resulting inverse covariance matrix.  We conduct the experiment on the dna data ($n=2000$ and $d=180$) from the libsvm website, which has three class labels. We solve the above problem to classify class 3 versus the rest. 
The comparison between different algorithms starting from an initial solution with all zero entries  for solving the above problem with $\lambda=0.1$ is presented in Figure~\ref{fig:5}(a). For R$^2$SG, we start from $t_1 = 10^3$ and restart it every $10$ stages with $t$ increased by a factor of $1.15$. The initial step sizes for all algorithms are tuned.

We also compare the dependence of  R$^2$SG's convergence on the initial solution with that of SG.  We use two different initial solutions (the first initial solution $\w_0=0$ and the second initial solution $\w_0$ is generated once from a normal Gaussian distribution). The convergence curves  of the two algorithms from the two different initial solutions are plotted in Figure~\ref{fig:5}(b). Note that the initial step sizes of SG and R$^2$SG are separately tuned for each initial solution. We can see that R$^2$SG is much  less sensitive to a bad initial solution than SG consistent with our theory.

{\subsection{Matrix Completion for Collaborative Filtering}

\begin{figure}[t]
\centering
\subfigure{\includegraphics[scale=0.3]{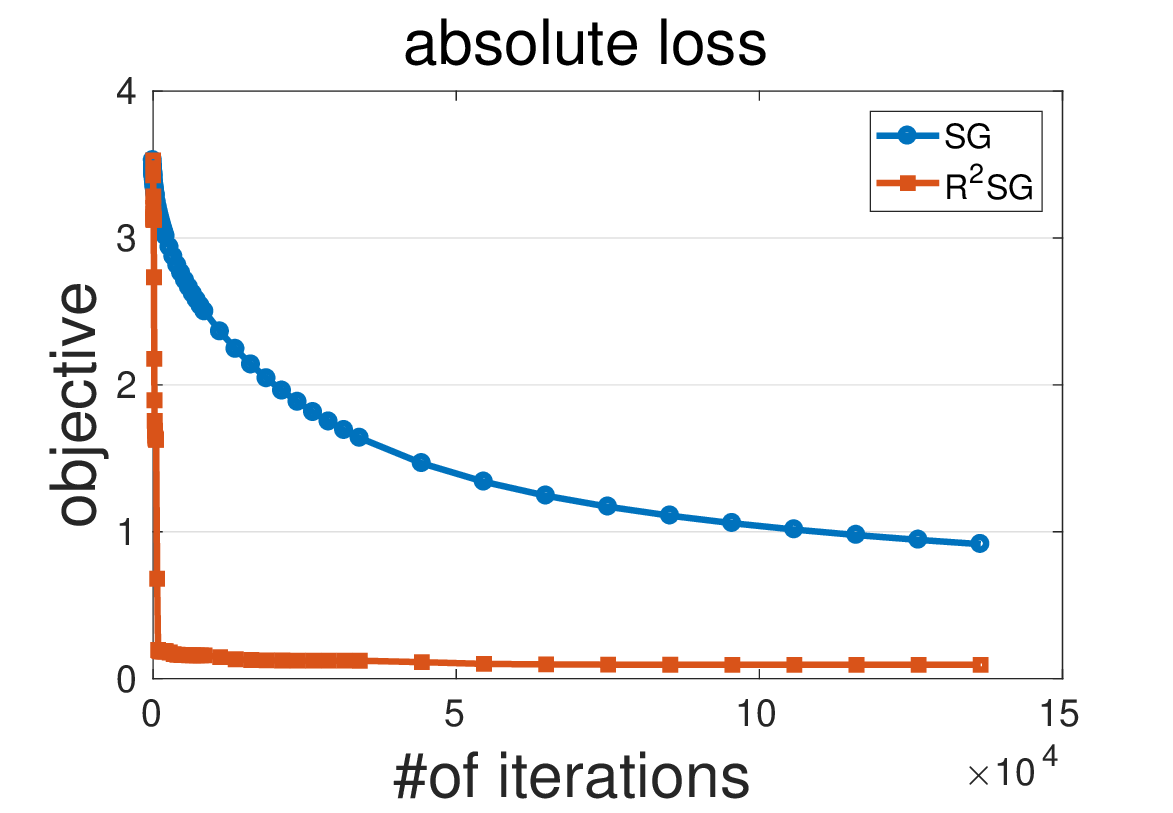}}\hspace*{0.2in}
\subfigure{\includegraphics[scale=0.3]{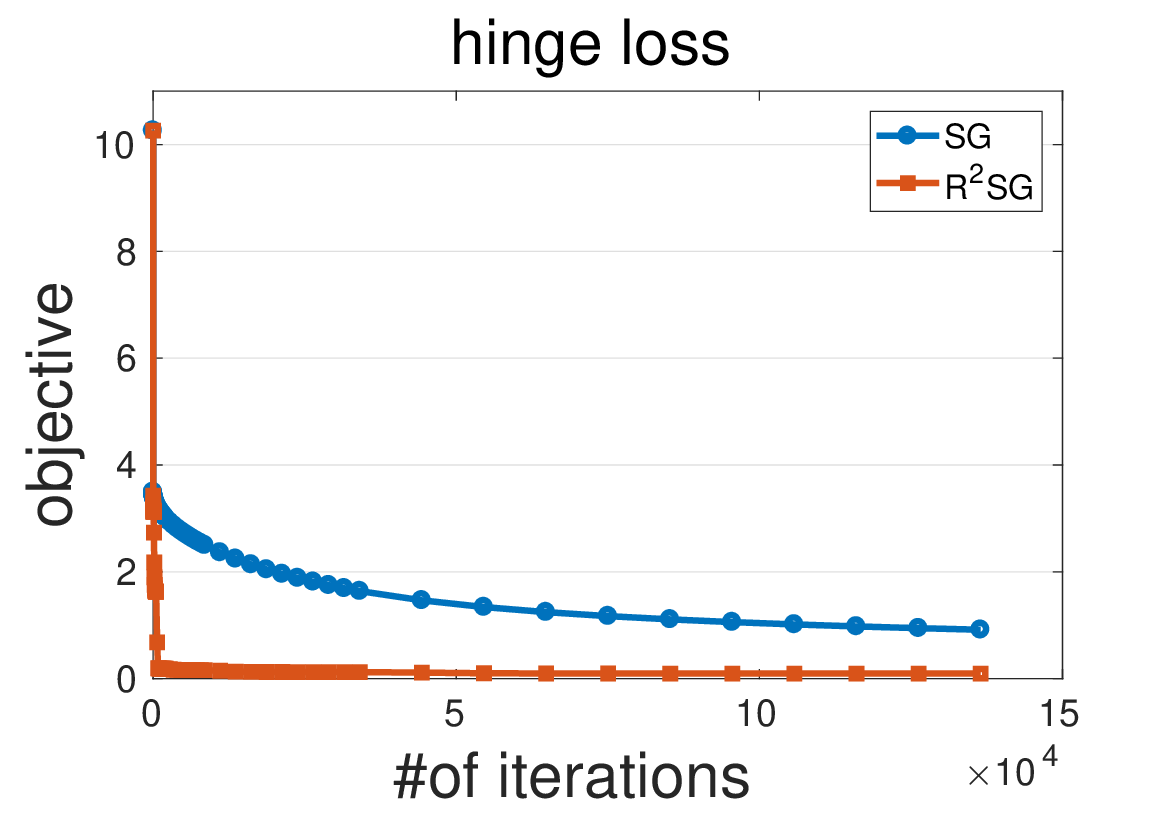}}
\caption{Results for solving low rank matrix completion with different loss functions.}\label{fig:mc}
\end{figure}

In this subsection, we consider low rank matrix completion problems to demonstrate the effectiveness of R$^2$SG without having  the knowledge of $c$ and $\theta$ in the local error bound condition. We consider a movie recommendation data set, namely MovieLens 100k data~\footnote{\url{https://grouplens.org/datasets/movielens/}}, which contains  $100,000$ ratings  from $m=943$ users on $n=1682$ movies.  We formulate the problem as a task of recovering a full  user-movie rating matrix $X$ from the partially observed matrix $Y$. The objective is
composed of a loss function measuring the difference between $X$ and $Y$ on the observed entries
and a nuclear norm regularizer on $X$ for enforcing a low rank, i.e., 
\begin{align}\label{eqn:mc}
\min_{X\in\R^{m\times n}} \frac{1}{N}\sum_{(i, j)\in\Sigma}\ell(X_{ij}, Y_{ij}) + \lambda \|X\|_*
\end{align}
where $\Sigma$ is a set of user-movie pairs that denote the observed entries, $\ell(\cdot, \cdot)$ denote a loss function, $\|X\|_*$ denotes the nuclear norm, $N=|\Sigma|$ and $\lambda>0$ is a regularization parameter. We consider two loss functions, i.e, the hinge loss and the absolute loss. For absolute loss, we set $\ell(a, b) = |a -b|$. For hinge loss, we follow that in~\citep{Rennie:2005:FMM:1102351.1102441} by introducing four thresholds $\theta_{1,2,3,4}$ due to  there
are five distinct ratings in $\{1, 2, 3, 4, 5\}$ that can be assigned to each movie, and defining 
$\ell(a, b) = \sum_{r=1}^{4}\max(0, 1 -T^r_{i,j}(\theta_r - X_{ij}))$, 
where $T^r_{ij}=\left\{\begin{array}{cc}1 &\text{ if } r\geq Y_{ij}\\ 0 &\text{ otherwise}\end{array}\right.$. In our experiment, we set $\theta_{1, 2, 3, 4}=(0, 3, 6, 9)$ and $\lambda=10^{-5}$ following~\citep{DBLP:journals/ml/YangMJZ14Non}. Since the loss function and the nuclear norm are both semi-algebraic functions~\citep{DBLP:journals/tnn/YangFS16,Bolte:2014:PAL:2650160.2650169}, then the problem~(\ref{eqn:mc}) satisfies an error bound condition on any compact set~\citep{arxiv:1510.08234}. However, it remains an open problem what are the proper values of $c$ and $\theta$ to make local error bound condition hold. Hence, we run R$^2$SG by setting $\theta=0$. To compare with SG, we simply set $t_1=10$ - the number of iterations of each stage of the first call of RSG.  The baseline SG is implemented in the same way as before. The results of the objective values vs the number of iterations  are plotted in Figure~\ref{fig:mc}. We can see that R$^2$SG converges much faster than SG, verifying the effectiveness of R$^2$SG predicted by  Theorem~\ref{thm:2RSG}.

}

\subsection{Comparison with Freund \& Lu's SG}\label{subsec:compFL}
In this subsection, we compare the proposed RSG with Freund \& Lu' SG algorithm empirically. The later algorithm is designed with a fixed relative accuracy $\epsilon'$ such that $\frac{f(\x_t) - f_*}{f_* - f_{slb}}\leq \epsilon'$, where $f_{slb}$ is a strict lower bound of $f_*$,  and requires to maintain the best solution in terms of the objective value during the optimization. For fair comparison,  we run RSG with a fixed $t$ and then vary $\epsilon'$ for Freund \& Lu's SG algorithm that is an input parameter, and then plot the objective values versus the running time and the number of iterations for both algorithms. The experiments are conducted on the two classification data sets as used in subsection~\ref{subsec:RR}, namely the housing data and the space-ga data, for solving robust regression problems~(\ref{eqn:rr}) with $p=1$ and $p=1.5$. The strict lower bound $f_{slb}$ in Freund \& Lu's algorithm is set to $0$.  The results are shown in Figure~\ref{fig:6} and Figure~\ref{fig:7}, where SGR refers to Freund \& Lu's SG algorithm with a specified relative accuracy. For each problem instance (a data set and a particular value of $p$), we report two results comparing the objective values  vs. running time and the number of iterations. 
  We can see that  RSG is very competitive in performance in terms of running time and converge faster than  Freund \& Lu's algorithm with a small  $\epsilon'=10^{-4}$ for achieving the same accurate solution (e.g., with objective gap less than $10^{-10}$). 
  

\begin{figure}[t]
\centering
\hspace*{-0.1in}\subfigure{\includegraphics[scale=0.19]{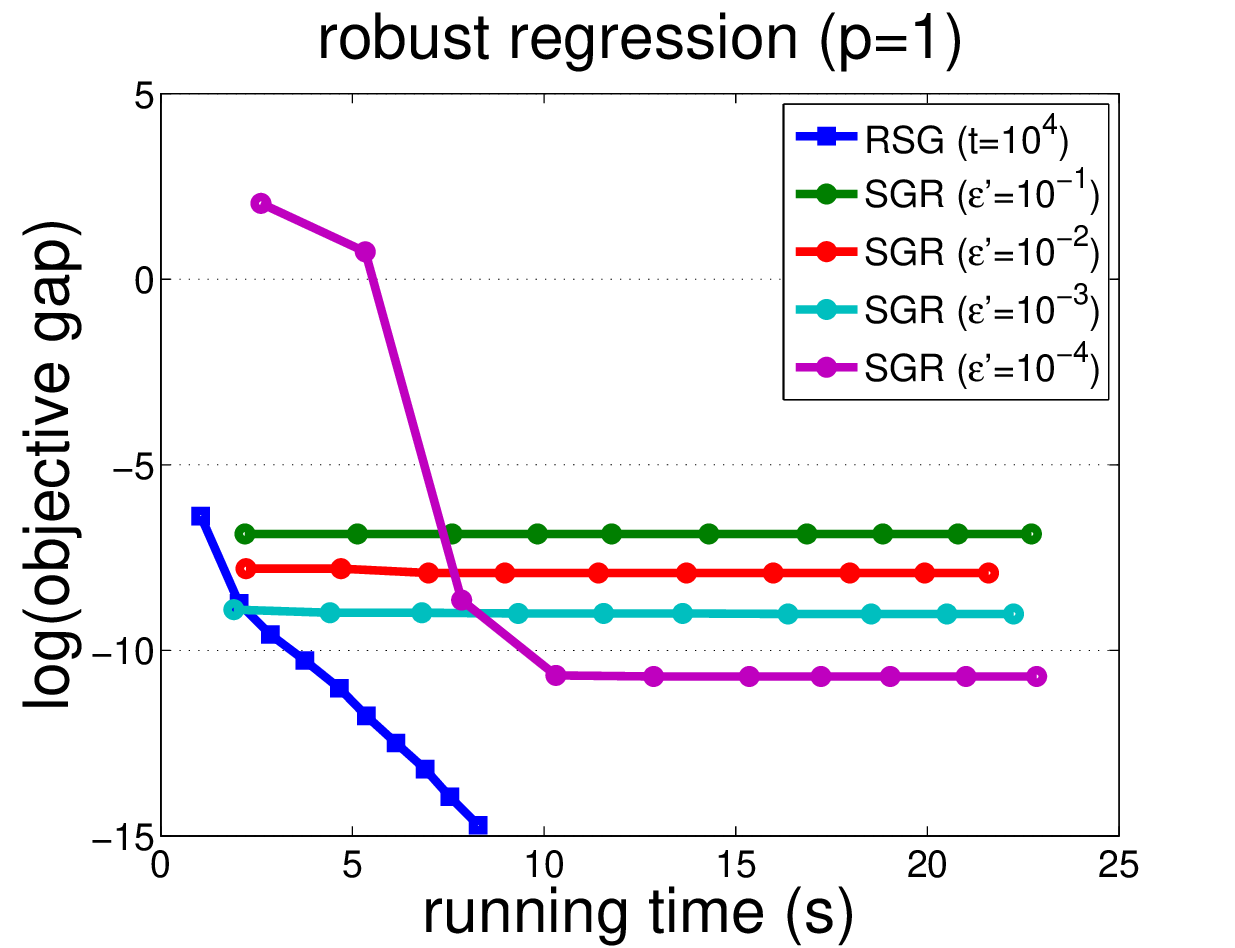}}\hspace*{-0.1in}
\subfigure{\includegraphics[scale=0.19]{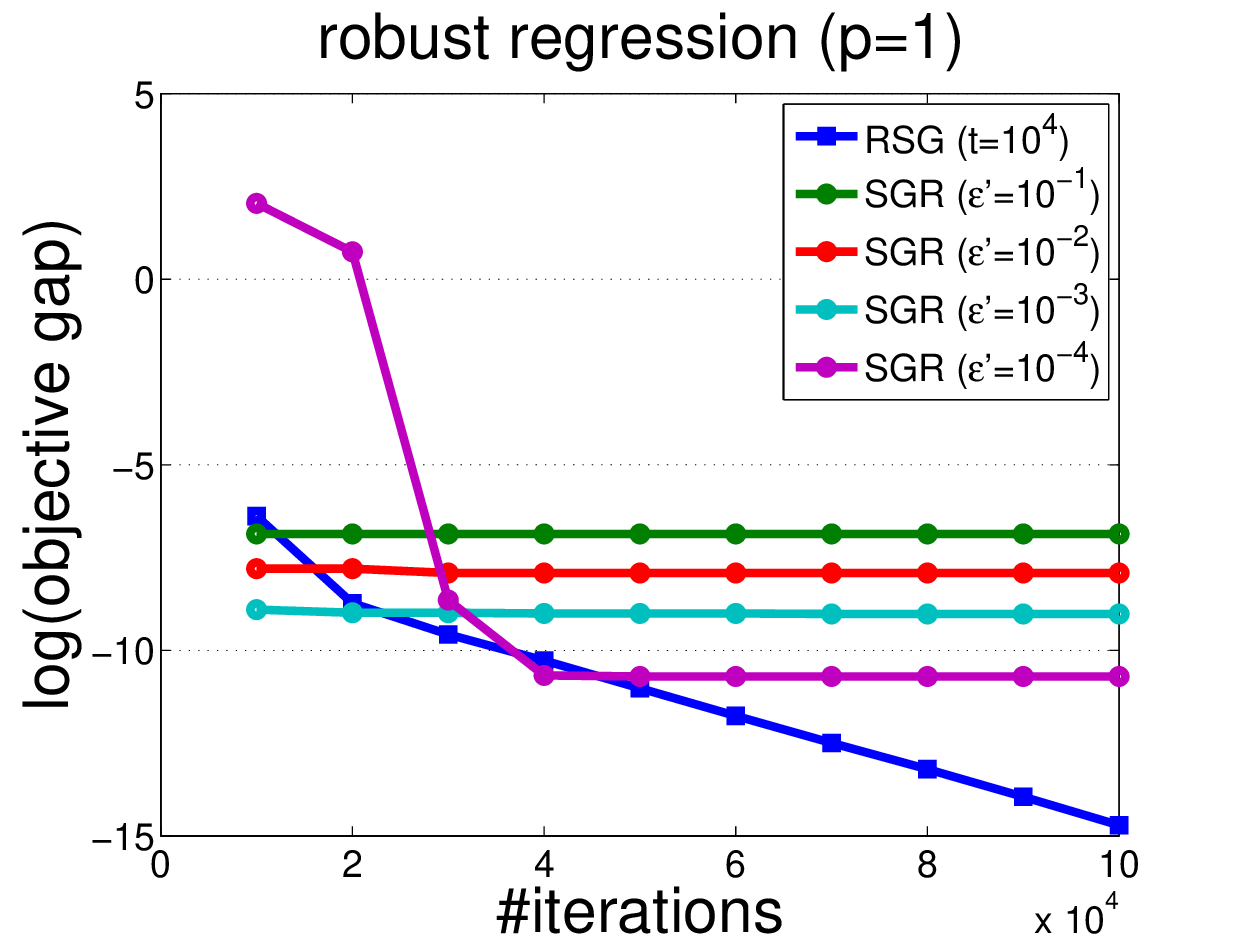}}\hspace*{-0.1in}
\subfigure{\includegraphics[scale=0.19]{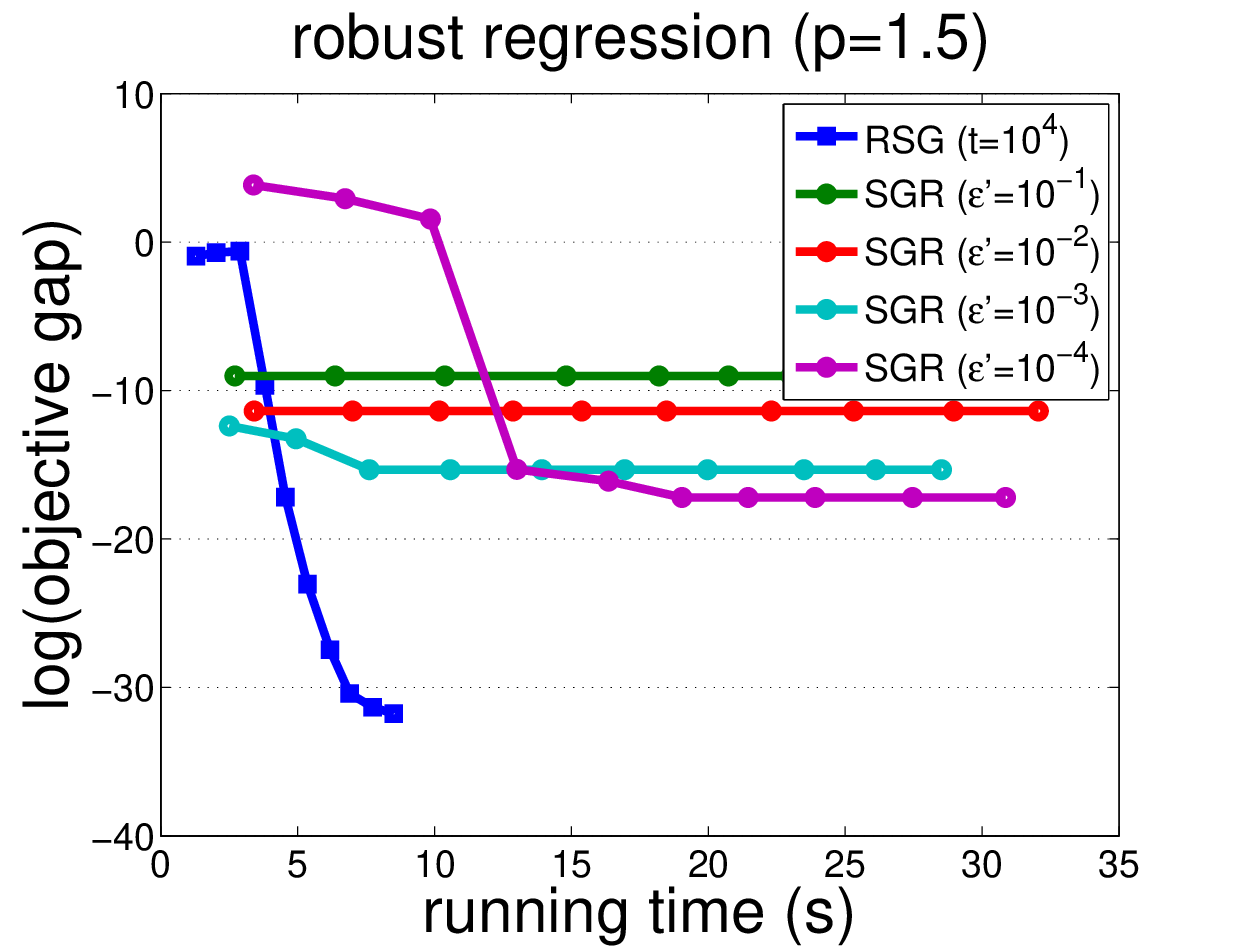}}\hspace*{-0.1in}
\subfigure{\includegraphics[scale=0.19]{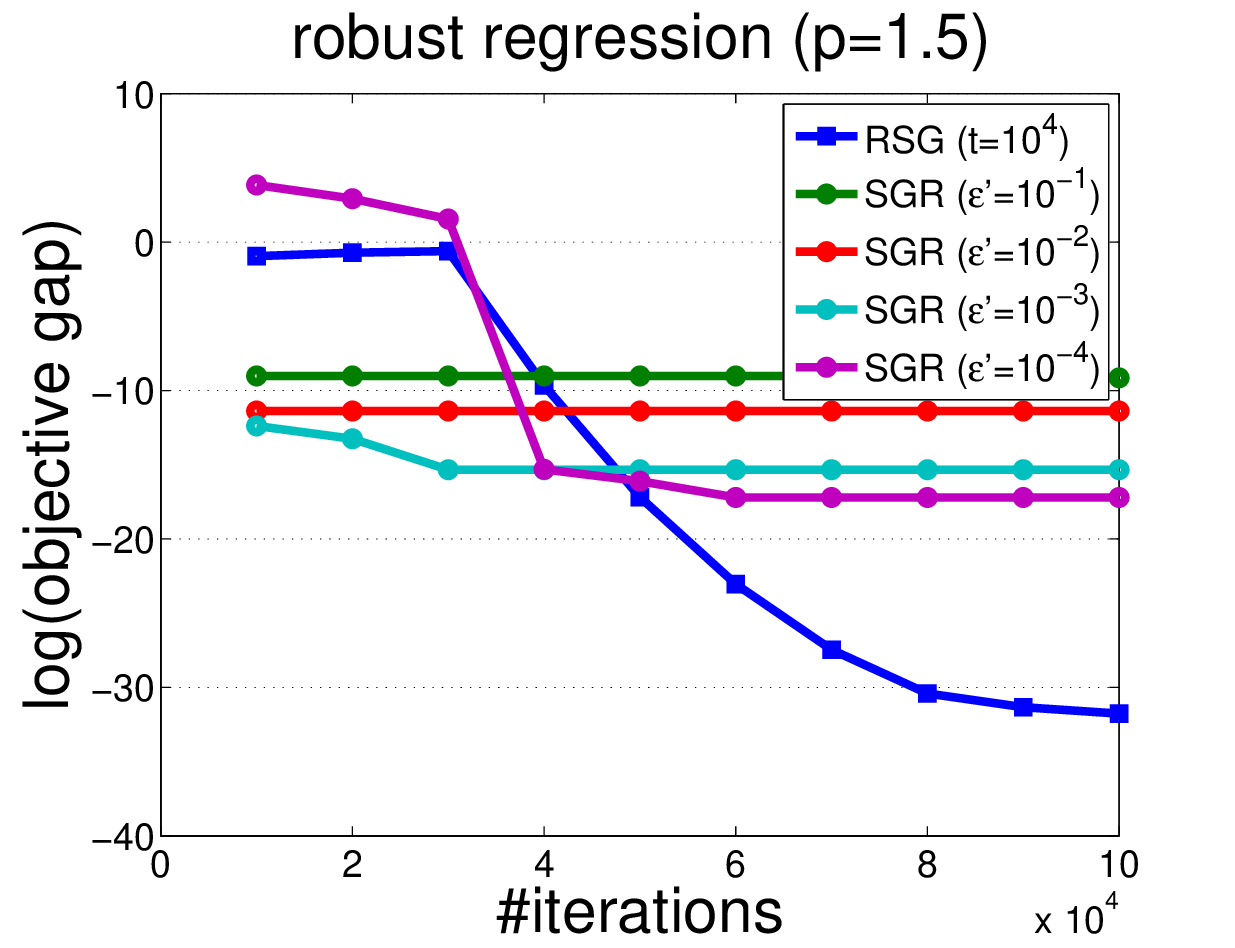}}\hspace*{-0.1in}
\caption{Comparison of RSG with Freund \& Lu's SG algorithm (SGR) on the housing data.  }\label{fig:6}

\centering
\hspace*{-0.1in}\subfigure{\includegraphics[scale=0.19]{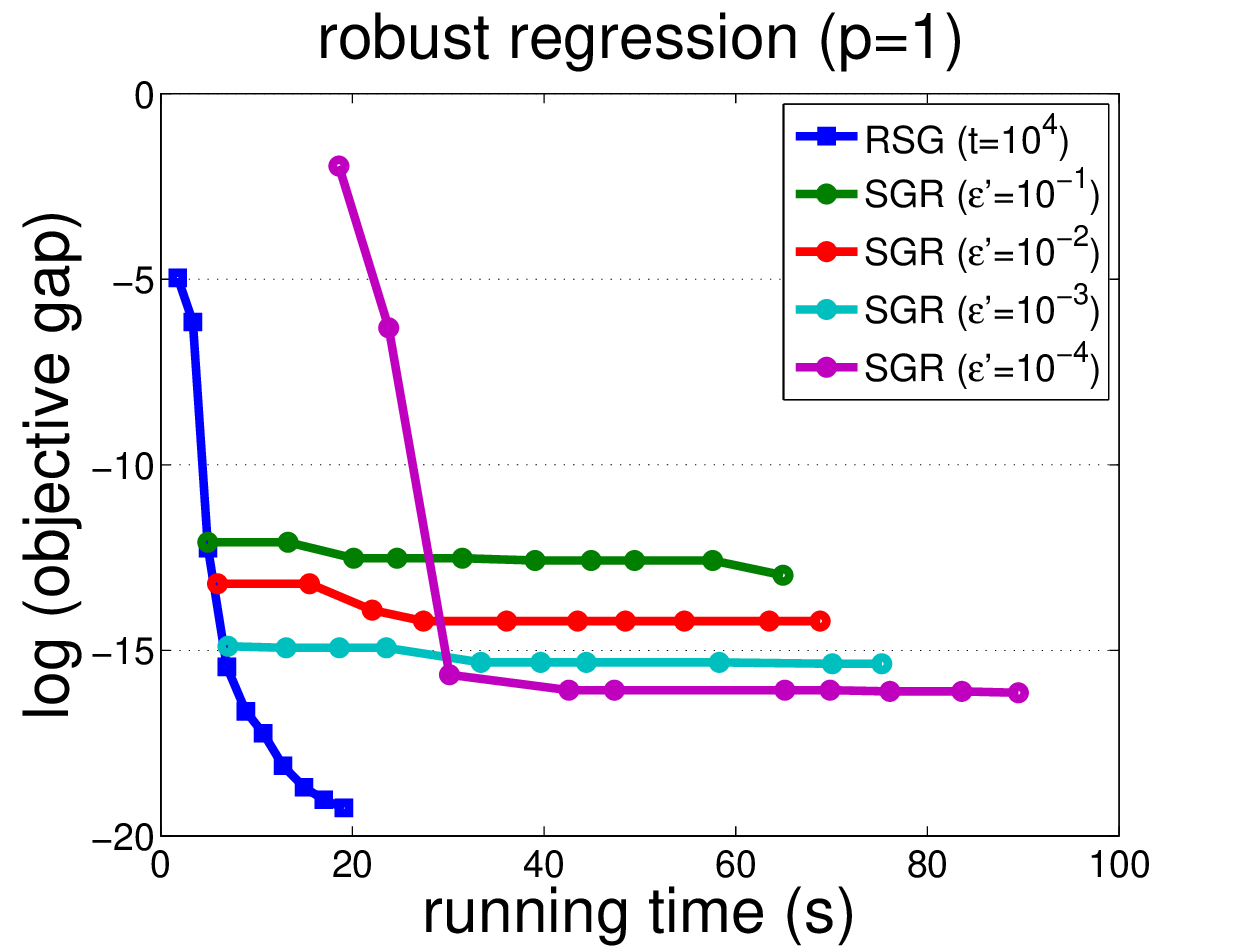}}\hspace*{-0.1in}
\subfigure{\includegraphics[scale=0.19]{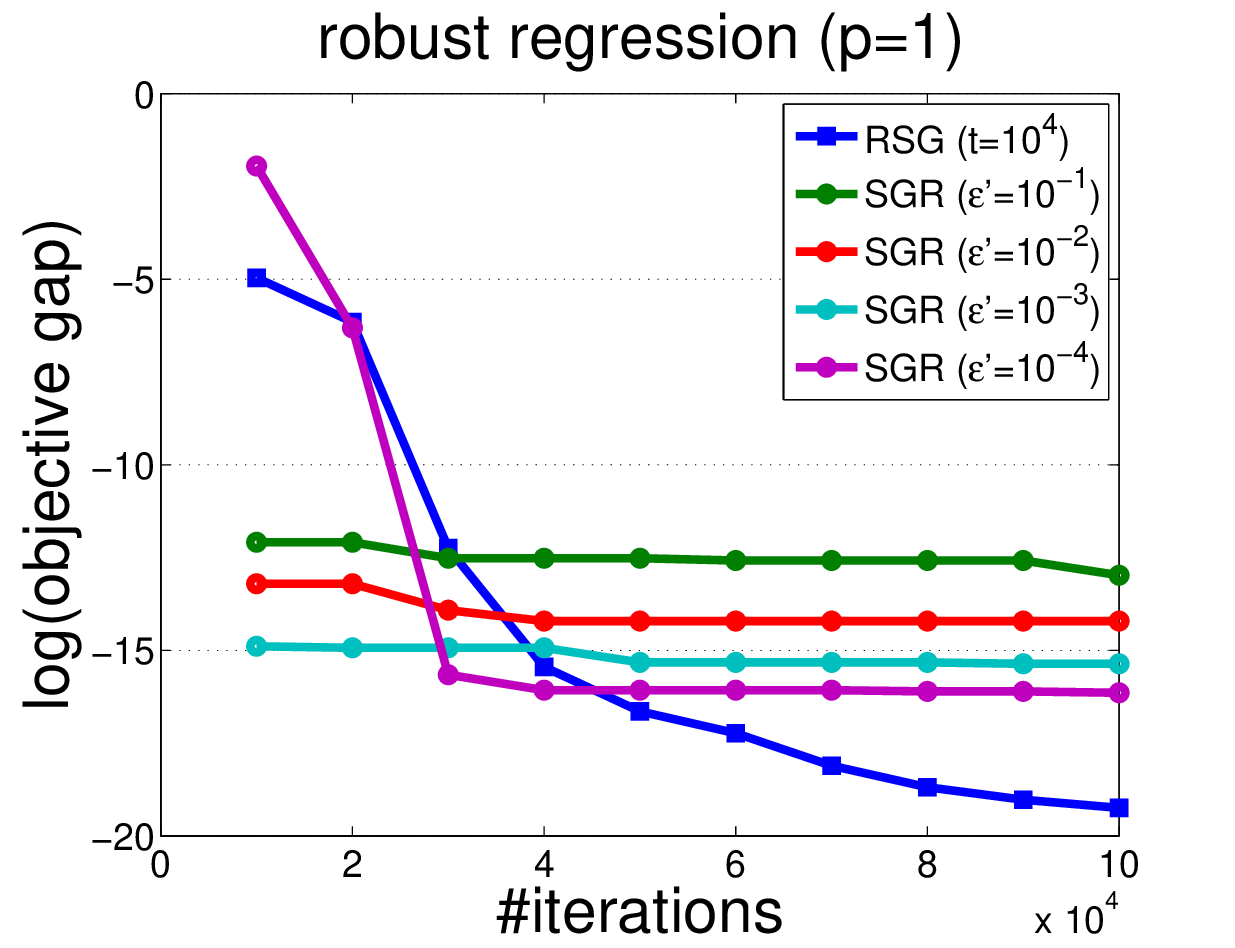}}\hspace*{-0.1in}
\subfigure{\includegraphics[scale=0.19]{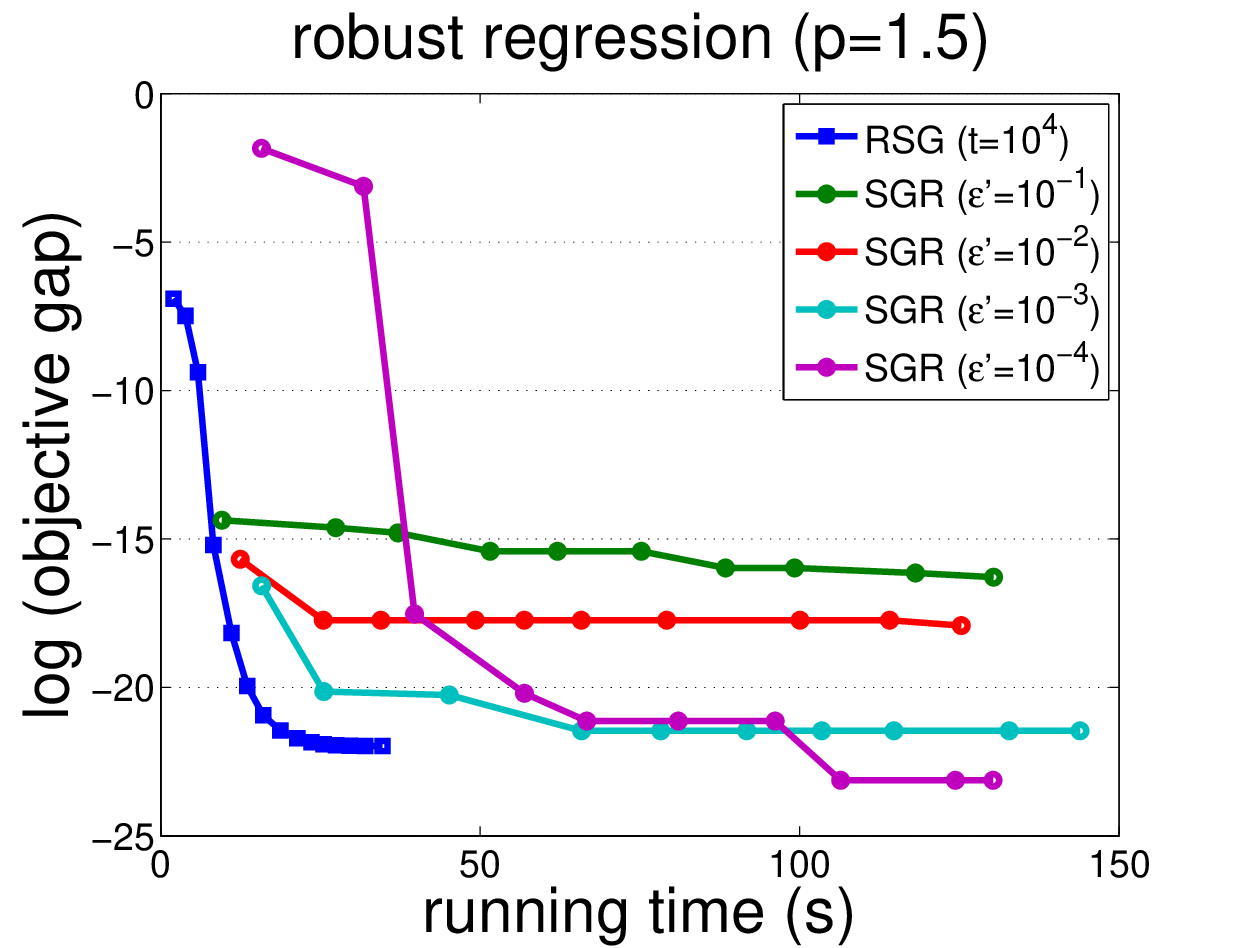}}\hspace*{-0.1in}
\subfigure{\includegraphics[scale=0.19]{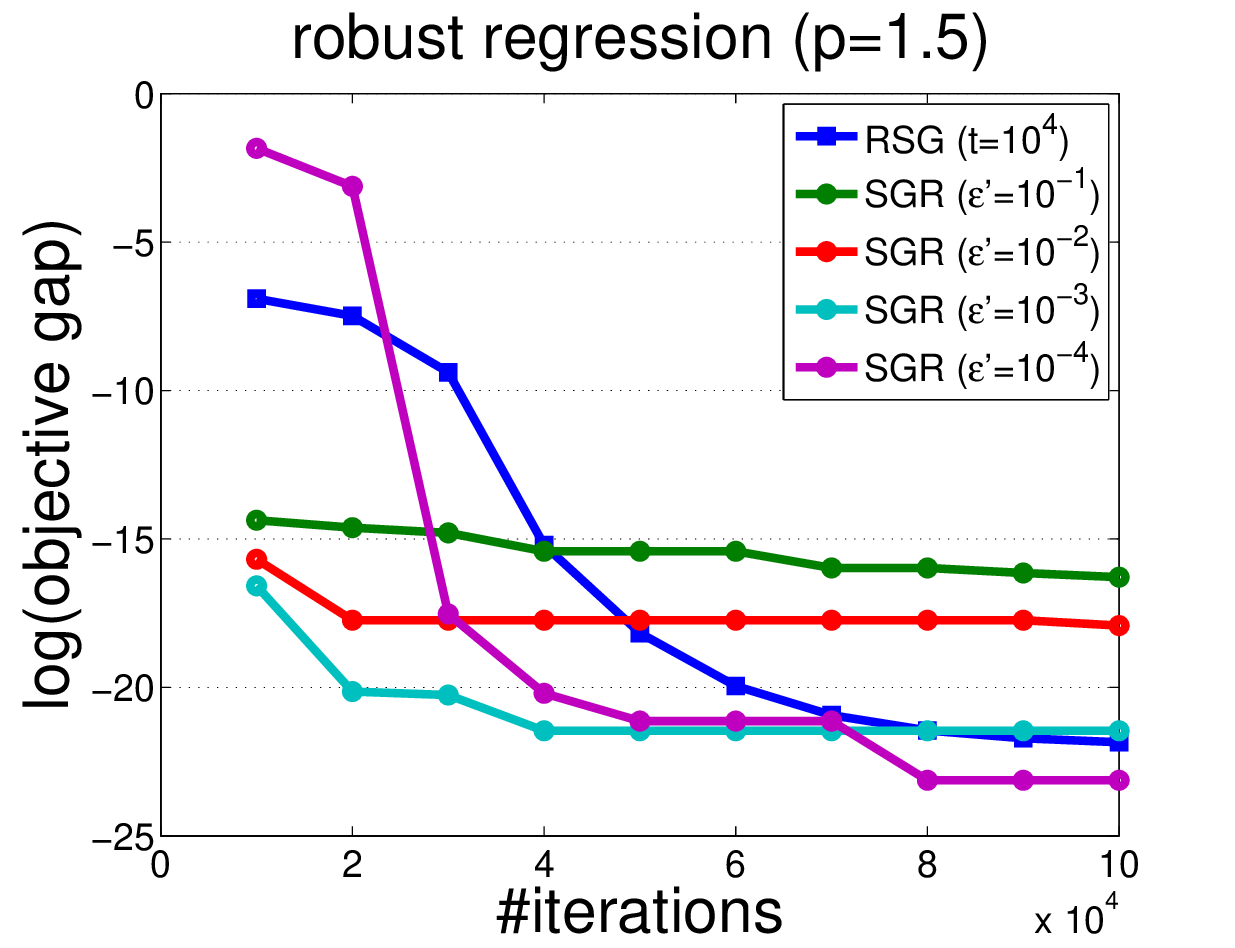}}\hspace*{-0.1in}
\caption{Comparison of RSG with Freund \& Lu's SG algorithm (SGR) on the space-ga data.  }\label{fig:7}
\end{figure}


\section{Conclusion}\label{sec:conc}

In this work, we have proposed a novel restarted subgradient method for  non-smooth and/or non-strongly convex optimization for obtaining an $\epsilon$-optimal solution. By leveraging the lower bound of the first-order optimality residual, we establish a generic complexity of RSG that improves over standard subgradient method. We have also considered several classes of non-smooth and non-strongly convex problems that admit a local error bound condition and derived the improved order of iteration complexities for RSG.  Several extensions have been made to design a parameter-free variant of RSG without requiring the knowledge of the constants in the local error bound condition. Experimental results on several machine learning tasks have demonstrated the effectiveness of the proposed algorithms in comparison to the subgradient method. 


\section*{Acknolwedgements}
We would like to sincerely thank anonymous reviewers for their very helpful comments. We thank James Renegar for pointing out the connection to his work and for his valuable comments on the difference between the two work. We also thank to Nghia T.A. Tran for pointing out the connection between the local error bound and metric subregularity of subdifferentials. Thanks to Mingrui Liu for spotting an error in the formulation of the $F$ matrix for GFlasso in earlier versions. 
T. Yang is supported by   NSF (1463988, 1545995).


\appendix

\section{A proposition needed to prove Corollary~\ref{thm:KL}}
The proof of Corollary~\ref{thm:KL} leverages the following result from~\citep{arxiv:1510.08234}.
\begin{prop}\label{prop:KL}\citep[Theorem 5]{arxiv:1510.08234}
Let $f(x)$ be an extended-valued, proper, convex and lower semicontinuous function that satisfies the KL inequality \eqref{eq:KLineq} at $x_*\in\arg\min f(\cdot)$ for all $x\in U\cap \{x: f(x_*)< f(x)< f(x_*)+\eta\}$, where $U$ is a neighborhood of $x_*$, then $dist(x, \arg\min f(\cdot))\leq \varphi(f(x) - f(x_*))$ for all $x\in U\cap \{x: f(x_*)< f(x)< f(x_*)+\eta\}$. 
\end{prop}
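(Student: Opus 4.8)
The statement is the standard ``KL inequality $\Rightarrow$ local error bound'' implication for convex functions, and the plan is to reprove it via the steepest-descent curve of $f$, which is the usual device in this setting. Write $f_*=\min f(\cdot)=f(x_*)$ and fix an arbitrary $x\in U\cap\{x:f_*<f(x)<f_*+\eta\}$; the goal is to exhibit a point of $\arg\min f(\cdot)$ within distance $\varphi(f(x)-f_*)$ of $x$.

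First I would set up the subgradient flow. Since $f$ is proper, convex and lower semicontinuous, the theory of evolution equations governed by the maximal monotone operator $\partial f$ (Br\'ezis) provides a unique locally absolutely continuous curve $\chi:[0,\infty)\to\R^d$ with $\chi(0)=x$ and $\dot\chi(t)=-\partial^0 f(\chi(t))$ for a.e.\ $t$, where $\partial^0 f(z)$ denotes the minimal-norm element of $\partial f(z)$; note that $\|\partial^0 f(z)\|_2$ is exactly the quantity $\|\partial f(z)\|_2$ appearing in \eqref{eq:KLineq}. Along this curve $t\mapsto f(\chi(t))$ is nonincreasing and locally absolutely continuous with $\frac{d}{dt}f(\chi(t))=-\|\dot\chi(t)\|_2^2$ for a.e.\ $t$, so $f(\chi(t))$ stays in $[f_*,f_*+\eta)$ for all $t$.

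The heart of the argument is a length estimate. As long as $\chi(t)\in U$ and $f(\chi(t))>f_*$, the KL inequality \eqref{eq:KLineq} applies at $\chi(t)$, and differentiating $t\mapsto\varphi(f(\chi(t))-f_*)$ gives
\[
-\frac{d}{dt}\varphi\big(f(\chi(t))-f_*\big)=\varphi'\big(f(\chi(t))-f_*\big)\,\|\dot\chi(t)\|_2^2\ \ge\ \|\dot\chi(t)\|_2 ,
\]
where the inequality follows from $\varphi'(f(\chi(t))-f_*)\,\|\dot\chi(t)\|_2=\varphi'(f(\chi(t))-f_*)\,\|\partial^0 f(\chi(t))\|_2\ge 1$ when $\dot\chi(t)\ne 0$ and is trivial otherwise. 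Integrating on $[0,T]$ and using $\varphi(0)=0$ together with the monotonicity of $\varphi$,
\[
\int_0^T\|\dot\chi(t)\|_2\,dt\ \le\ \varphi\big(f(x)-f_*\big)-\varphi\big(f(\chi(T))-f_*\big)\ \le\ \varphi\big(f(x)-f_*\big).
\]
Since $T$ is arbitrary, $\chi$ has finite total length at most $\varphi(f(x)-f_*)$; hence $\chi(t)$ is Cauchy as $t\to\infty$ and converges to a point $x_\infty$ with $\|x-x_\infty\|_2\le\varphi(f(x)-f_*)$. By the standard asymptotics of the subgradient flow of a convex function, $f(\chi(t))\downarrow \inf f = f_*$ and $x_\infty\in\arg\min f(\cdot)$. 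Therefore $\mathrm{dist}(x,\arg\min f(\cdot))\le\|x-x_\infty\|_2\le\varphi(f(x)-f_*)$, as claimed.

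The step I expect to be the real obstacle is the spatial localization: \eqref{eq:KLineq} is only assumed on $U\cap\{f_*<f<f_*+\eta\}$, so one must ensure the flow does not leave $U$ before it reaches $\arg\min f(\cdot)$. For convex $f$ this is resolved by a continuation/stopping-time argument — the displacement accumulated up to time $T$ is bounded by the partial length $\int_0^T\|\dot\chi\|_2$, which is in turn bounded by $\varphi(f(x)-f_*)$, so shrinking the starting neighborhood of $x_*$ keeps $\chi$ inside $U$ — and this bookkeeping is precisely what \citep{arxiv:1510.08234} carries out; a self-contained proof would follow it. (In the common case $U=\R^d$ mentioned after Corollary~\ref{thm:KL} the difficulty disappears.) If one prefers to avoid ODE existence/regularity theory altogether, the curve $\chi$ can be replaced by a proximal-point (or small-step subgradient) sequence, along which the same telescoping KL estimate gives a discrete length bound, and then one lets the step sizes tend to zero.
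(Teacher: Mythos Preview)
The paper does not actually prove this proposition; it is quoted from \citep{arxiv:1510.08234} (their Theorem~5) and used as a black box in the proof of Corollary~\ref{thm:KL}. There is therefore no in-paper argument to compare your proposal against.

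That said, your approach is the standard one and is essentially how the cited reference establishes the result: run the steepest-descent curve $\chi$ of $\partial f$ from $x$, differentiate $t\mapsto\varphi(f(\chi(t))-f_*)$ and use \eqref{eq:KLineq} to dominate $\|\dot\chi(t)\|_2$, integrate to obtain the finite-length bound $\int_0^\infty\|\dot\chi\|_2\,dt\le\varphi(f(x)-f_*)$, and conclude that the limit $x_\infty$ is a minimizer at distance at most $\varphi(f(x)-f_*)$ from $x$. The computation is correct.

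You also correctly isolate the one genuine subtlety, namely keeping $\chi$ inside $U$ so that \eqref{eq:KLineq} remains available along the entire curve. Your remedy---the accumulated displacement is itself bounded by $\varphi(f(x)-f_*)$, so a continuation argument traps $\chi$ in $U$---is the right idea, but note that in general it yields the error bound only on a possibly \emph{smaller} neighborhood of $x_*$ than the original $U$; the proposition as quoted glosses over this distinction. For the purposes of the present paper the point is moot: Corollary~\ref{thm:KL} immediately imposes the uniform covering condition~\eqref{eqn:smallS}, and the concrete examples discussed take $U=\R^d$, in which case the difficulty disappears entirely.
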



\bibliographystyle{abbrv}
\bibliography{all,lewa,icml11}

\end{document}